\title{On the maximum size packings of disks with kissing radius $3$}
\author{Alexander Golovanov\thanks{MIPT, Moscow, Russia. Email: \texttt{Golovanov@phystech.edu}}}
\date{}
\DeclareMathOperator{\parent}{Par}
\DeclareMathOperator{\proj}{proj}
\DeclareMathOperator{\angccw}{\measuredangle}
\renewcommand{\dj}{\delta}
\newcommand{\Dj}{\Delta}
\renewcommand{\Im}{\mathrm{Im}\,}
\newcommand{\ccwarc}[3]{\omega_{#1}(#2, #3)}
\def\myhence#1{\expandafter\@myhence\csname c@#1\endcsname}
\def\@myhence#1{%
  \ifcase#1\or so\or then\or hence\or therefore\or thus\else\@ctrerr\fi}
\newcounter{hencectr}
\renewcommand{\Rightarrow}{%
\stepcounter{hencectr}%
\text{ and \myhence{hencectr}}\,%
\ifnum\the\value{hencectr}=5\setcounter{hencectr}{0}\fi%
}
\newtheorem{theorem}{Theorem}
\newtheorem{lemma}{Lemma}[section]
\newtheorem{claim}{Claim}[section]
\newtheorem{observation}[claim]{Observation}
\newtheorem{corollary}[claim]{Corollary}
\newcommand{\optionaldesc}[3]{%
  #1\protected@edef\@currentlabel{#2}\ifthenelse{\equal{#3}{}}{}{\label{#3}}%
}
\newcommand\caseitem[3][]{\item[\optionaldesc{\textbf{Case #2: #3}}{#2}{\IfValueTF{#1}{#1}{}}]\mbox\\}
\theoremstyle{remark}
\newtheorem*{remark}{Remark}
\begin{document}

\maketitle

\begin{abstract}
    L\'{a}szl\'{o} Fejes T\'{o}th and Alad\'{a}r Heppes proposed the following generalization of the kissing number problem. Given a ball in $\mathbb{R}^d$, consider a family of balls touching it, and another family of balls touching the first family. Find the maximal possible number of balls in this arrangement, provided that no two balls intersect by interiors, and all balls are congruent. They showed that the answer for disks on the plane is $19$. They also conjectured that if there are three families of disks instead of two, the answer is $37$. In this paper we confirm this conjecture.
\end{abstract}

\section{Introduction}

A collection $\mathcal C$ of convex bodies in $\mathbb R^d$ is called a \emph{packing} if no two of them have an interior point in common. We assume that two convex bodies of $\mathcal{C}$ may touch each other. 
In the current paper, we are only interested in finite packings of disks (and balls) of unit diameters, and therefore, we omit the phrase ``of unit diameter'' most of the time.

The concept of \emph{minimum-distance graphs} is tightly connected with packings of balls.
Given a finite set of points in $\mathbb{R}^d$, the minimum-distance graph is the graph whose vertices are points of the set and edges are drawn between all pairs of points within the minimum distance among all pairwise distances in the set. In particular, if a packing of congruent balls in $\mathbb{R}^d$ contains at least one pair of touching balls, then there is a correspondence between pairs of touching balls and edges in the minimum-distance graph induced by the centeres of the balls. 
We refer the interested reader to the recent survey~\cite{Swanepoel2018} on various distance problems in discrete geometry, including those on packings.


Another concept closely related to the local structure of a packing of balls and minimum-distance graphs is the \emph{kissing number}. Recall that the kissing number in $\mathbb{R}^d$ is the maximum number of $d$-dimensional congruent balls with non-intersecting interiors that can touch a given ball of the same size. One can easily see that the kissing number of a space is the maximum possible degree of a vertex in a minimum-distance graph of any set of points from that space.
It is clear that the kissing number in the plane is $6$. One of the classical questions in mathematics is the thirteen spheres problem raised by Isaac Newton and David Gregory whether the kissing number in $\mathbb{R}^3$ is $12$ or $13$, which was first settled by K.~Sch\"{u}tte and B.\,L.~Van~der~Waerden in~\cite{schutte1952problem}. 

L. Fejes T\'{o}th and A. Heppes~\cite{toth_heppes} considered a generalization of the kissing number problem. Given a ball and a family of balls of the same size, all touching the initial one, add another family of balls of the same size that all of them touch at least one previous ball. No two balls may intersect by interiors. Find the maximal number $T_d$ of balls, except the first one, in such an arrangement. Fejes T\'{o}th and Heppes proved some lower and upper bounds for $T_2$, $T_3$, and $T_4$; for instance, $T_2 = 18$. In the last paragraph of the introduction of \cite{toth_heppes}, they also wrote:
\begin{quote}
We can continue the process of
successively enlarging a bunch of balls, but in the next step the problem becomes extremely intricate, even in the plane.
\end{quote}
Z.~F\"{u}redi and P.\,A.~Loeb~\cite[last paragraph of Section~6]{furedi} attribute the following conjecture to L.\,Fejes~T\'{o}th and A.~Heppes: The similar three-layer configuration in the plane contains at most $36$ disks (except the first given disk). The goal of this paper is to confirm this conjecture.

Let us introduce this problem formally. Given a packing $\mathcal{P}$ of disks, for two disks $A$, $B\in\mathcal{P}$ we define the \emph{kissing distance} between $A$ and $B$, as the smallest $n$ such that there is a sequence of disks $D_0$, $D_1$, \ldots, $D_n\in\mathcal{P}$, where $A = D_0$, $B = D_n$, and for $1\leq i\leq n$ the disks $D_{i-1}$ and $D_i$ touch each other. If there is no such sequence, then we set the kissing distance between $A$ and $B$ to $\infty$. We say that a packing $\mathcal{P}$ \emph{has kissing radius $n$} if there is a disk $D\in\mathcal{P}$, which we call \emph{the source} or \emph{the 0-disk}, such that for every other disk $B$ the kissing distance between $D$ and $B$ does not exceed $n$. Denote by $f(n)$ the maximum number of disks in a packing of kissing radius $n$.


In terms of minimum-distance graphs, $f(n)$ is the maximum possible number of vertices in the induced subgraph with just the vertices at distance $n$ or less from a given vertex. 

The hexagonal arrangement of disks gives the trivial lower bound $f(n) \geq 1 + {3n(n+1)}.$ Since the kissing number of the plane is $6$, we have $f(1) = 7$. According to~\cite{toth_heppes}, we have $f(2) = 19$. The main result of this paper is to confirm the conjecture $f(3) = 37$ mentioned in~\cite{furedi}.

\begin{theorem}\label{theorem:f_of_3} $f(3) = 37$.
\end{theorem}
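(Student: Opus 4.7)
\medskip
\noindent\textbf{Proof plan.}

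The lower bound $f(3)\ge 37$ is witnessed by the unit triangular lattice: place the source at a lattice vertex; the ball of graph-radius $3$ in the triangular lattice graph has $1+6+12+18=37$ vertices, and these are precisely the disks of kissing distance at most $3$ from the source.

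For the upper bound, fix a packing $\mathcal{P}$ of kissing radius $3$ with source $D_0$ centred at $O$, and for $k\in\{1,2,3\}$ let $L_k$ be the disks of $\mathcal{P}$ at kissing distance \emph{exactly} $k$ from $D_0$. Since $f(2)=19$ is already established in \cite{toth_heppes}, applied to the sub-packing $\{D_0\}\cup L_1\cup L_2$ we obtain $1+|L_1|+|L_2|\le 19$, so it suffices to prove the companion sharpening
\[
|L_3| \le 36-|L_1|-|L_2|,
\]
with equality only in the hexagonal case. I would begin by collecting the hard constraints: the centre of any disk in $L_k$ lies in $B(O,k)$; the planar kissing bound forces $|L_1|\le 6$; every $L_3$-disk is tangent to some $L_2$-disk, while each $L_2$-disk has at most $5$ neighbours outside $L_1$ (since at least one tangency lies in $L_1$). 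These combinatorial inputs alone yield only the weak estimate $|L_3|\le 5|L_2|$, so genuine geometric information is needed.

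The plan for the geometry is an angular decomposition around $O$. Order $L_1$ cyclically as $A_1,\dots,A_{|L_1|}$, and let $\alpha_i = \angle A_i O A_{i+1}$, so that $\sum \alpha_i = 2\pi$. Inside each sector I would count the $L_2$- and $L_3$-disks that can fit, exploiting: (i) a disk whose centre lies at distance $d$ from $O$ subtends the angle $2\arcsin(1/(2d))$ at $O$; (ii) a disk of $L_2$ tangent to $A_i$ has its centre on the circle $|OA_i|=1$ shifted by a unit vector, so its distance from $O$ lies in a narrow range around $\sqrt3$ or $2$ depending on which $L_1$-disks it is pinched between; (iii) a disk of $L_3$ tangent to such an $L_2$-disk is similarly pinned near the circle of radius $3$. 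Combining these with a sectorial Oler--Groemer-type area/perimeter bound inside each sector should produce a sector-level inequality whose sum across $i$ yields the desired global bound.

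The main obstacle, which I expect to dominate the work, is the treatment of \emph{non-rigid} configurations: sector angles $\alpha_i$ can deviate from $\pi/3$, an $L_2$-disk may sit further out than $\sqrt3$, and such displacements can open ``pockets'' into which an unexpected $L_3$-disk tries to squeeze. A purely local sector estimate will be too generous in these pockets. The remedy, which I expect to implement through a careful case analysis on $|L_1|$ and on the angle vector $(\alpha_1,\dots,\alpha_{|L_1|})$, is an amortisation: any pocket gained in one sector forces, via the tangency chain $D_0$--$L_1$--$L_2$--$L_3$, a loss of angular room (and hence of admissible disks) in the neighbouring sectors. The technical heart of the proof lies in quantifying this trade-off sharply enough to leave only the hexagonal configuration on the equality boundary.
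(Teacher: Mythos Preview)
Your lower bound is fine. For the upper bound, what you have is a plan, not a proof: you correctly locate the obstruction (non-rigid configurations, pockets opening in one sector at the expense of neighbours), but the ``amortisation'' and the ``careful case analysis on $|L_1|$ and the angle vector'' are not carried out, and nothing beyond the crude $|L_3|\le 5|L_2|$ is actually established. Note too that your target inequality $|L_3|\le 36-|L_1|-|L_2|$ is already equivalent to $f(3)\le 37$, so the appeal to $f(2)=19$ does no logical work. A further warning: your point~(ii) is inaccurate --- a $2$-disk centre can sit anywhere at distance in $(1,2]$ from $O$, not in a ``narrow range around $\sqrt3$ or $2$'' --- and this slack is exactly what makes a direct sector count too loose.

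The paper's route is structurally different and supplies the mechanism your plan lacks. Rather than partitioning by $L_1$, it discards all $3$-disks and all childfree $1$-disks, keeps the remaining packing $\mathcal P$ of kissing radius $2$, and constructs a closed \emph{sparse-centered} curve $\gamma$ (a concatenation of counterclockwise unit-radius arcs whose centres are pairwise at distance $\ge 1$) covering the boundary of the union $S$ of unit-radius disks about the centres of $\mathcal P$. Every discarded disk has its centre on $\gamma$, and Lemma~\ref{lemma:master} shows that any sparse-centered arc joining two such centres has length at least $\pi/3$; hence at most $|\gamma|/(\pi/3)$ disks were discarded. The real work is then Lemma~\ref{lemma:good_curve}, the bound $C_1+C_2+|\gamma|/(\pi/3)\le 36$, proved by splitting $\gamma$ into pieces $\gamma_{ij}$ indexed by \emph{consecutive $2$-disks} in a tree traversal and bounding each $|\gamma_{ij}|$ by a combination of local angles $\varphi_{ij},\psi_{ij},\alpha_{ij}$ that each sum to $2\pi$ over all pieces. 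That telescoping is the precise, additive form of the amortisation you only gesture at: any local excess is automatically repaid in the global angular budget, and no case split on $|L_1|$ is needed.
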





It is also worth mentioning that this pattern does not generalize on all $n$. In fact, $f(n) = 2\pi/\sqrt3\cdot n^2\cdot(1 + o(1))$, but the paper with this result is in progress at the moment of writing.

This paper is organized as follows. In Section~2 we introduce the required notation and two key lemmas, Lemma~\ref{lemma:master} and Lemma~\ref{lemma:good_curve}. After this we deduce Theorem~\ref{theorem:f_of_3} from these lemmas. In Section~3 we prove Lemma~\ref{lemma:master}. The proof of Lemma~\ref{lemma:good_curve} is separated into constructing a curve of certain type (Section~4 and Appendix~A), reducing Lemma~\ref{lemma:good_curve} to a certain local inequality Claim~\ref{lemma:can-construct-without-alpha} (Section~5), and, finally, establishing this inequality (Section~6).


\medskip \textbf{Acknowledgements.}
The author thanks Alexey Balitskiy, Dmitry Krachun and Alexandr Polyanskii for helpful discussions and comments on drafts of this paper. The author also thanks Alexandr Polyanskii for simplifying the proof of Case~\ref{subsec:case-000}.

\section{Proof of Theorem \ref{theorem:f_of_3} modulo the key lemmas}



\subsection{Definitions}

Let $\mathcal{P}$ be a packing of kissing radius $n$ with $D$ being its source. A disk $D' \in \mathcal{P}$ is called a \textit{$d$-disk} if the kissing distance between it and $D$ equals $d$. Note that this definition justifies the name ``0-disk''. We call the set of $d$-disks the \textit{$d\textsuperscript{th}$ layer}. 

Consider any $k$-disk $A$, where $k > 0$. There is at least one disk $B$ from the $(k-1)$\textsuperscript{th} layer such that $A$ and $B$ touch each other. Pick any of these disks and call it the \textit{parent} of the disk $A$.
Denote the parent of the disk $A$ by $\parent(A)$.
In particular, $C_0$ is the parent of every $1$-disk. We will also say that each disk is a \emph{child} of its parent. A disk without children is called \emph{childfree}.

The crucial role in the argument will be played by curves of special type, which we now introduce. A curve $\gamma = \{\gamma(t)\,\colon\,t\in[0, \ell]\} \subset \mathbb{R}^2$ is \textit{sparse-centered} if the following conditions hold.

\begin{enumerate}[label=(\alph*)]
  \item There exists a sequence $t_0 = 0 < t_1 < \ldots < t_m=\ell$ such that $\gamma([t_{i-1}, t_i])$ is a circle arc of unit radius with center $c_i$ for all $i\in\{1, \ldots, m\}$;
  \item $\gamma(t)$ is a natural parametrization of $\gamma$, that is, $|\dot{\gamma}(t)| = 1$ for $t\in[0, \ell]\setminus\{t_i\}_{i=0}^m$, and thus, $|\gamma| = \ell$;
  \item For all $i\in\{1, \ldots, m\}$ and $t\in(t_{i-1}, t_i)$, the cross product $(\gamma(t) - c_i)\times \dot{\gamma}(t)$ is positive, that is, each arc is directed counterclockwise;
  \item For all $i$, $j\in\{1, \ldots, m\}$, either $c_i = c_j$ or $|c_j - c_i|\geq 1$.
\end{enumerate}

In the sequel we often omit the word ``sparse-centered'' when talking about curves.

\subsection{Outline of the proof of Theorem \ref{theorem:f_of_3}}

Consider an arbitrary packing of unit diameter disks of kissing radius $3$. Remove all $3$-disks and childfree $1$-disks from it and denote by $\mathcal{P}$ the remaining packing. We may assume that $\mathcal{P}$ has at least two $2$-disks, as in the opposite case a very rough upper bound on the number of disks in the initial packing would be $f(2) + 6 = 25$, where $6$ is the maximal number of $3$-disks, all touching the $2$-disk. Denote by $C_i$ the number of $i$-disks of $\mathcal{P}$.

Then, consider the union $S$ of open disks of unit radii whose centers are the centers of disks from $\mathcal{P}$ (recall that the disks of the initial packing are of unit diameter, not of unit radius). One can see that centers of the disks we excluded lie on the boundary of $S$.






Theorem~\ref{theorem:f_of_3} follows from the lemmas below.

\begin{lemma}
Let $\gamma$ be a sparse-centered curve with endpoints $a$ and $b$. If $|b - a|\geq 1$ then the length of $\gamma$ is at least $\pi/3$.
\label{lemma:master}
\end{lemma}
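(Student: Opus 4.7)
The plan is to prove the stronger inequality $|b-a|\leq 2\sin(L/2)$ for any sparse-centered curve $\gamma$ of length $L\leq\pi/3$, from which the lemma follows since $2\sin(\pi/6)=1$. By truncating $\gamma$ at the first time it reaches distance $1$ from $a$, I may assume $|b-a|=1$. The tangent angle $\theta(t)$ increases at rate $+1$ on each arc (from the CCW unit-speed parametrization of a unit-radius circle). At a junction between arcs with distinct centers $c_i,c_{i+1}$, the isosceles triangle with sides $|c_i-p|=|c_{i+1}-p|=1$ and $|c_{i+1}-c_i|\geq 1$ (from condition~(d)) gives, via the law of cosines, a tangent-direction jump $|\phi_i|\geq\pi/3$. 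Writing the chord as
\[
b-a \;=\; \int_0^L e^{i\theta(t)}\,dt \;=\; e^{i\theta_0}\sum_{i=1}^m 2\sin(\ell_i/2)\,e^{i\widetilde{M}_i},
\]
where $\widetilde{M}_i$ is the (shifted) tangent angle at the midpoint of arc $i$, reduces the problem to bounding the magnitude of this sum.

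The crucial computation is the two-arc case. A direct expansion gives
\[
|b-a|^2 \;=\; 4\sin^2(\ell_1/2)+4\sin^2(\ell_2/2)+8\sin(\ell_1/2)\sin(\ell_2/2)\cos(\phi+L/2),
\]
while the identity $\sin^2(L/2)=\sin^2(\ell_1/2)+\sin^2(\ell_2/2)+2\sin(\ell_1/2)\sin(\ell_2/2)\cos(L/2)$ (applied with $L/2=\ell_1/2+\ell_2/2$) shows that $4\sin^2(L/2)$ has the same form but with $\cos(L/2)$ replacing $\cos(\phi+L/2)$. Thus $|b-a|^2\leq 4\sin^2(L/2)$ reduces to $\cos(\phi+L/2)\leq\cos(L/2)$, which I verify by a case split on the sign of $\phi$ using $|\phi|\geq\pi/3$ and $L/2\leq\pi/6$: in both cases $\phi+L/2$ lies at distance at least $L/2$ from $0$ in $[-\pi,\pi]$, and since $|\phi+L/2|\leq\pi+\pi/6$ the cosine comparison holds.

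For $m\geq 3$ arcs I plan to iterate the two-arc bound: replace two consecutive arcs $k,k+1$ by the single unit arc through their common endpoints $\gamma(t_{k-1})$ and $\gamma(t_{k+1})$. Since the two-arc computation gives $|\gamma(t_{k+1})-\gamma(t_{k-1})|\leq 2\sin((\ell_k+\ell_{k+1})/2)$, this secant arc has length at most $\ell_k+\ell_{k+1}$, so the new curve has the same overall endpoints and smaller (or equal) total length. Iterating reduces eventually to a single arc from $a$ to $b$ of length exactly $2\arcsin(|b-a|/2)=\pi/3$, which forces $L\geq\pi/3$ on the original. The main obstacle is that such a replacement may violate the non-adjacent portion of condition~(d), since the new center could lie too close to some non-consecutive $c_j$. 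I expect to handle this either directly---showing the new center is sufficiently separated from all other centers when $L\leq\pi/3$, using the small diameter of the overall configuration---or via a compactness/variational argument: select a minimizer of $L$ among sparse-centered curves with $|b-a|=1$, and use first-order conditions on the jump angles, together with the two-arc analysis, to argue that a minimizer has at most two arcs. This variational reduction is the most delicate part of the plan.
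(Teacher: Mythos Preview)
Your two-arc computation is correct, and the reduction to $\cos(\phi+L/2)\le\cos(L/2)$ is a clean way to settle that case. The gap is exactly where you flag it: the iterative merge does not preserve the sparse-centered structure. When you replace arcs $k,k+1$ by the secant arc, its new center $c'$ need not satisfy $|c'-c_{k-1}|\ge1$ or $|c'-c_{k+2}|\ge1$; once that fails, the jump at the new junction can drop below $\pi/3$ and the two-arc bound no longer applies at the next step. Your ``small diameter'' heuristic does not rescue this, since all centers sit at distance~$1$ from the short curve and can cluster arbitrarily. The variational fallback is also underspecified: there is no reason a minimizer should have at most two arcs, and ``first-order conditions on the jump angles'' do not obviously force this.

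The paper also passes to a minimizer by compactness, but then makes a different and much sharper observation: on a shortest curve the tangent direction $\dot\gamma$ is \emph{injective}. Indeed, if $\dot\gamma(x_1)=\dot\gamma(x_2)$ with $x_1<x_2$, then because both arcs are unit-radius and counterclockwise one has $\gamma(x_2)-\gamma(x_1)=c_{i_2}-c_{i_1}$; by condition~(d) either the centers coincide (and one excises $\gamma|_{[x_1,x_2]}$) or $|c_{i_2}-c_{i_1}|\ge1$ (and $\gamma|_{[x_1,x_2]}$ is itself a strictly shorter admissible curve with endpoints at distance $\ge1$). Either way minimality is contradicted. Once $\dot\gamma$ is injective, its pushforward of arclength is Lebesgue measure on a subset of the unit circle of total mass $\ell$, and the rearrangement bound
\[
|b-a|=\Bigl|\int_0^{\ell}\dot\gamma(t)\,dt\Bigr|\le\int_{-\ell/2}^{\ell/2}\cos\varphi\,d\varphi=2\sin(\ell/2)
\]
finishes immediately. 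This sidesteps the merging obstruction entirely and is the idea your plan is missing.
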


\begin{lemma}
\label{lemma:good_curve}
There exists a closed sparse-centered curve $\gamma$ covering the boundary of $S$ and satisfying the inequality
$$C_1 + C_2 + \frac{|\gamma|}{\pi/3} \leq 36,$$
where $|\gamma|$ stands for the length of $\gamma$.
\end{lemma}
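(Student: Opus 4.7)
My plan is to construct $\gamma$ from the topological boundary $\partial S$ with an appropriate orientation, and to establish the inequality through a sum of local per‑vertex estimates.

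\textbf{Construction of $\gamma$.} Orient every smooth arc of $\partial S$ so that $S$ lies locally to the left of the direction of travel. Each such arc is part of some unit circle $\partial D(v)$ with $v$ a center of a disk of $\mathcal{P}$; since $D(v)\subset S$ is on the left, the traversal is counterclockwise around $v$, giving condition~(c). Condition~(d) is immediate from the packing (centers of disks in $\mathcal P$ are pairwise at distance $\geq 1$), and conditions~(a) and~(b) hold after natural reparametrization. Each connected component of $\partial S$ is then already a closed sparse‑centered curve. To combine multiple components into a single closed curve (necessary if $S$ has holes or $\partial S$ is disconnected), I would add ``bridges'' consisting of counterclockwise arcs of $\partial D(v)$ for vertices $v$ that border more than one boundary component of $S$; these preserve the sparse‑centered property, and the length they add will be charged to the local allowance at the bridging vertices.

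\textbf{Reduction to a local inequality.} For each $v\in\mathcal{P}$, let $\alpha(v)$ denote the angular measure (equivalently, arc length) of $\partial D(v)\cap\partial S$, possibly split into several arcs. Then $|\gamma|=\sum_{v\in\mathcal{P}}\alpha(v)$ up to bridging contributions. I would reduce Lemma~\ref{lemma:good_curve} to a per‑vertex inequality of the form
\[
\frac{\alpha(v)}{\pi/3}+\chi_v\;\le\;b_v,\qquad\sum_{v\in\mathcal{P}}b_v\;\le\;36,
\]
where $\chi_v=1$ if $v$ is a $1$‑ or $2$‑disk and $\chi_v=0$ for the $0$‑disk, and $b_v$ depends only on the local configuration at $v$ (essentially its degree in the contact graph of $\mathcal P$ together with its role as a $0$‑, $1$‑, or $2$‑disk). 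The calibration is dictated by the hexagonal extremal packing, where $|\gamma|=6\pi$ with the $0$‑disk contributing $0$, each of the six $1$‑disks contributing $1$, each of the six ``axial'' $2$‑disks contributing $3$ (arc $2\pi/3$ plus $\chi_v$), and each of the six ``equatorial'' $2$‑disks contributing $2$ (arc $\pi/3$ plus $\chi_v$), for a total of $0+6+18+12=36$. This per‑vertex inequality is what I expect Claim~\ref{lemma:can-construct-without-alpha} of the paper to encode.

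\textbf{Main obstacle.} The hardest step is establishing the per‑vertex inequality in the general (non‑hexagonal) case. Here Lemma~\ref{lemma:master} enters decisively: whenever $\partial D(v)\cap\partial S$ is split into multiple arcs, the intervening pieces of $\partial S$ are themselves sparse‑centered curves, and whenever their endpoints have chord distance at least $1$ Lemma~\ref{lemma:master} forces each such piece to have length at least $\pi/3$. Combined with the total angular budget $2\pi$ around $v$ and the explicit shadow sizes (a neighbor at distance $r\in[1,2)$ occludes an arc of angular measure $2\arccos(r/2)$ on $\partial D(v)$, equal to $2\pi/3$ at $r=1$ and $\pi/3$ at $r=\sqrt 3$), this constrains $\alpha(v)$ in terms of the number and placement of neighbors of $v$ within distance~$2$. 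Turning this heuristic into a clean inequality requires an exhaustive case analysis by local configuration, which is the content split across Sections~5 and~6 of the paper; the bridge contributions from the construction step should likewise be absorbed in these local estimates.
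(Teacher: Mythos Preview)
Your reduction and the paper's reduction diverge in a way that matters. The paper does \emph{not} decompose $|\gamma|$ as a sum of per-vertex arc contributions $\alpha(v)$ with individual budgets $b_v$. Instead, it fixes a counterclockwise traversal of the $\mathcal{P}$-tree, divides the plane into angular sectors $R_{ij}$ bounded by consecutive $2$-disks $D_i$, $D_j$ in the traversal, and bounds $|\gamma_{ij}|$ for each sector by a linear combination of three directed angles $\varphi_{ij}$, $\psi_{ij}$, $\alpha_{ij}$ (Claim~\ref{lemma:can-construct}). The crucial feature is that each of these three angle families sums to exactly $2\pi$ over all sectors, so the global bound $|\gamma|\le 12\pi-\tfrac{\pi}{3}(C_1+C_2)$ drops out by summation, with no per-vertex case analysis at all. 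Claim~\ref{lemma:can-construct-without-alpha} is the sector inequality with $\alpha_{ij}$ subtracted out; it is not a per-vertex statement.

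Your per-vertex scheme has a genuine gap: you never specify $b_v$ beyond its values in the hexagonal picture, and a uniform choice cannot work. For instance, in a three-disk chain (one $0$-disk, one $1$-disk, one $2$-disk, all collinear) the exposed arc on the $0$-disk has length $4\pi/3$, forcing $b_{v_0}\ge 4$, while in the hexagonal extremal you need $b_{v_0}=0$. So $b_v$ must depend intricately on the local geometry, and you have given no candidate rule, let alone a proof that $\sum_v b_v\le 36$ in general. Your proposed role for Lemma~\ref{lemma:master} is also off: bounding the length of the pieces of $\partial S$ \emph{between} arcs of $\partial D(v)$ from below does not by itself bound $\alpha(v)$ from above, since those pieces wander away from $v$. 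In the paper, Lemma~\ref{lemma:master} is applied inside Section~6 (Cases~1a and~1b) to bound certain \emph{angle sums} from below, and separately in the derivation of Theorem~\ref{theorem:f_of_3} to count excluded disks---not in the way you describe.

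Finally, your construction of $\gamma$ as $\partial S$ plus ad hoc bridges is where the paper spends the most technical effort (Section~4 and Appendix~A): when $S$ is not simply connected, $\partial S$ alone is not a single sparse-centered curve, and the paper's $\gamma_{ij}$ follow a carefully chosen subsequence of circles (those ``involved'' in $\sigma_{ij}$, characterized via a Delaunay-triangulation argument) rather than $\partial S$ itself. The bridging cannot be deferred to a local allowance without knowing what that allowance is.
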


\begin{proof}[Deriving Theorem~\ref{theorem:f_of_3} from Lemma~\ref{lemma:master} and Lemma~\ref{lemma:good_curve}]
Consider the curve $\gamma$ from Lemma~\ref{lemma:good_curve}. All centers of excluded disks of the initial packing belong to $\gamma$. Moreover, by Lemma~\ref{lemma:master}, they split $\gamma$ into parts, each of length at least $\pi/3$. Therefore, there is at most $\frac{|\gamma|}{\pi/3}$ excluded disks, which together with Lemma~\ref{lemma:good_curve} implies that the number of disks in the initial packing does not exceed

$$1 + C_1 + C_2 + \frac{|\gamma|}{\pi/3} \leq 37.$$
\end{proof}

\section{Proof of Lemma \ref{lemma:master}}

\begin{observation}
Given points $c_1$, \ldots, $c_n$, consider all sparse-centered curves with centers at a subset of $\{c_i\}$ and with endpoints at least $1$ apart.
There exist such curves of shortest possible length.
\end{observation}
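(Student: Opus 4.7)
The plan is a compactness argument that reduces the optimization to finitely many compact subproblems. Write $\mathcal{A}$ for the set of sparse-centered curves with centers in $\{c_1,\ldots,c_n\}$ and endpoints at distance at least $1$, and set $L^{\ast}=\inf_{\gamma\in\mathcal{A}}|\gamma|$; we may assume $\mathcal{A}\neq\emptyset$, else the statement is vacuous. The first observation is that the set of \emph{candidate transition points} is finite: a transition point of any admissible curve lies on the unit circles of two distinct centers $c_i, c_j \in \{c_1,\ldots,c_n\}$, and two unit circles meet in at most two points, so there are at most $n(n-1)$ candidates in total.

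The key reduction is that every $\gamma \in \mathcal{A}$ can be shortened to an admissible curve whose join/endpoint sequence $\gamma(t_0),\gamma(t_1),\ldots,\gamma(t_m)$ is pairwise distinct. Indeed, if $\gamma(t_i)=\gamma(t_j)$ for some $0 \leq i < j \leq m$, then deleting $\gamma|_{(t_i,t_j)}$ and concatenating the remaining pieces at the common value yields a strictly shorter curve with the same endpoints. The result lies in $\mathcal{A}$: conditions (a)--(c) are inherited from $\gamma$, condition (d) is preserved because the set of centers used only shrinks, and if two arcs that become adjacent share the same center they simply merge into one counterclockwise arc. Each application of this shortcut strictly decreases the number of arcs, so the process terminates. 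In the resulting curve the points $\gamma(t_1),\ldots,\gamma(t_{m-1})$ are distinct elements of the candidate transition set, hence $m \leq n(n-1)+1$.

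Such reduced curves fall into finitely many \emph{combinatorial types}, each specified by an ordered sequence of at most $n(n-1)+1$ centers from $\{c_1,\ldots,c_n\}$ together with a choice (at most two possibilities) of transition point at each join. Fixing a combinatorial type freezes all interior transitions, so the curve is parametrized by its start point on the unit circle around the first center and its end point on the unit circle around the last, subject to $|p_0-p_m|\geq 1$. This parameter space is compact, and the total length is a lower semicontinuous function on it; the only discontinuity occurs when the first or last arc shrinks to length $0$, and there the value of the length agrees with its $\liminf$. Hence each combinatorial type attains its infimum, and $L^{\ast}$, being the minimum of finitely many attained values, is itself attained.

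The main technical point is the shortcut step in the second paragraph: one has to check for every possible location of the repeated point (an interior transition, one of the endpoints of $\gamma$, or a configuration where the merge of two newly adjacent same-center arcs is needed) that the resulting curve still satisfies (a)--(d) and has strictly smaller length. Once this routine bookkeeping is in place, the conclusion follows from the standard fact that a lower semicontinuous function on a compact set attains its minimum, together with the finiteness of the combinatorial types.
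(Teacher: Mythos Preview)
Your argument is correct and shares the paper's overall plan: bound the number of arcs, then run a finite-dimensional compactness argument. The one substantive difference is how the arc bound is obtained. The paper first notes that a single arc of length $\pi/3$ already has endpoints at distance $1$, so attention may be restricted to curves of total length at most $\pi/3$; since every interior arc runs between two points of the finite intersection set $I$ and therefore contains at least one of the finitely many ``elementary'' arcs in $A$ (each of some positive minimum length), the total-length bound caps the number of arcs. You instead shortcut away repeated join points so that the interior transitions become pairwise distinct elements of the finite intersection set, yielding $m\le n(n-1)+1$ directly, and then enumerate combinatorial types. Your route is more explicit and does not rely on the specific constant $\pi/3$; the paper's is terser. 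Both feed into the same compactness endgame. One small wrinkle: your opening claim that every transition point lies on two \emph{distinct} unit circles presupposes that consecutive arcs have different centers, which the definition does not enforce; you handle this via the merge step after a shortcut, but strictly speaking the same merge should also be applied to the initial curve before invoking the distinct-circle claim.
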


\begin{proof}
%
Let $I$ be the set of the intersection points between all circles of unit radius centered at $c_1$, \ldots, $c_n$. Some of these circles are partitioned into closed arcs by the elements of $I$ (each arc is bounded by two adjacent points from $I$ on its circle). Let $A$ be the set of all these arcs, directed counterclockwise. The set $A$ is obviously finite.

Since any arc of length $\pi/3$ is a valid curve, without loss of generality, we may assume that valid curves have length at most $\pi/3$ and endpoints at least $1$ from each other. Since the set of all arcs $A$ is finite, we may derive that every such curve consists of at most $m$ arcs for some positive $m$.
Using the standard compactness argument, we obtain that there is a shortest valid curve consisting of at most $m$ circular arcs.
\end{proof}

Let $\gamma$ be the sparse-centered curve from Lemma~\ref{lemma:master}, and $t_0$, \ldots, $t_m$ be as in the definition of sparse-centered curves.
Without loss of generality, we may assume that $\gamma$ is a shortest sparse-centered curve satisfying the conditions of Lemma~\ref{lemma:master}.

\begin{observation}
There do not exist $0 < x_1 < x_2 < \ell$ such that the derivatives $\dot{\gamma}(x_1)$ and $\dot{\gamma}(x_2)$ exist and are equal.
\end{observation}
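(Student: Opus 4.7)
The plan is to argue by contradiction from the minimality of $\gamma$. Suppose $\dot\gamma(x_1)=\dot\gamma(x_2)$ with $0<x_1<x_2<\ell$; since the derivatives exist there, both points lie in the interior of arcs of $\gamma$ with some centers $c'$ and $c''$ respectively. Because each arc is traversed counterclockwise at unit speed along a unit circle, the tangent vector equals the radius vector rotated by $+\pi/2$; equating the two tangents gives the identity $\gamma(x_2)-\gamma(x_1)=c''-c'$, which will drive both cases below.

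If $c'=c''$, the identity forces $\gamma(x_1)=\gamma(x_2)$, and the natural move is to excise the loop: replace $\gamma$ by the concatenation of $\gamma|_{[0,x_1]}$ and $\gamma|_{[x_2,\ell]}$, reparametrized by arc length. This candidate is strictly shorter and has the same endpoints, so the endpoint separation condition is inherited; it remains to check that it is still sparse-centered. The arcs used are sub-arcs of arcs of $\gamma$, so conditions (a)--(c) are immediate, and at the junction $\gamma(x_1)=\gamma(x_2)$ both adjoining pieces lie on the same unit circle around $c'=c''$ with matching tangent direction, so they merge into a single arc of the new partition and condition (d) is inherited from the (subset of) original centers. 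This contradicts the minimality of $\gamma$.

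If $c'\neq c''$, no modification of $\gamma$ is required at all. The sparse-centered condition (d) applied to $\gamma$ itself forces $|c''-c'|\geq 1$, hence $|\gamma(x_2)-\gamma(x_1)|\geq 1$. Therefore the sub-curve $\gamma|_{[x_1,x_2]}$ is itself a sparse-centered curve (the four conditions are inherited from $\gamma$) with endpoints at distance $\geq 1$ and length $x_2-x_1<\ell$, directly contradicting the minimality of $\gamma$ among admissible sparse-centered curves with centers in the given finite point set.

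I expect the only step requiring any genuine care is the bookkeeping at the junction in the first case, namely confirming that the merged arc meets the letter of definition (a)--(d); this is routine once $c'=c''$ and the tangent match are in hand, and everything else is driven by the single identity $\gamma(x_2)-\gamma(x_1)=c''-c'$.
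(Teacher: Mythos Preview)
Your proposal is correct and follows essentially the same argument as the paper: derive the identity $\gamma(x_2)-\gamma(x_1)=c''-c'$ from the equality of tangent directions, then in the case $c'=c''$ excise the loop to get a shorter admissible curve, and in the case $c'\neq c''$ use condition~(d) to conclude $|\gamma(x_2)-\gamma(x_1)|\geq 1$ so that the sub-curve $\gamma|_{[x_1,x_2]}$ already contradicts minimality. Your only addition is a slightly more explicit verification that the spliced curve in the first case remains sparse-centered, which the paper leaves implicit.
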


\begin{proof}
Suppose the contrary. Let $p_1$ and $p_2$ be the points $\gamma(x_1)$ and $\gamma(x_2)$, respectively. Let $c_1$ and $c_2$ be the centers of the arcs containing $p_1$ and $p_2$. According to the definition, $\dot{\gamma}(x_1)$ equals $p_1 - c_1$ rotated by the angle $\pi/2$ counterclockwise, and the same holds for $\dot{\gamma}(x_2)$ and $p_2 - c_2$. Since $\dot{\gamma}(x_1) = \dot{\gamma}(x_2)$, we have $p_1 - c_1 = p_2 - c_2$ and, therefore, $c_2 - c_1 = p_2 - p_1$. There are two cases.

\begin{itemize}
    \item Suppose that $c_1 = c_2$. Then the curve $\gamma([0, x_1)\cup[x_2, \ell])\subset\gamma([0, \ell])$ is a shorter sparse-centered curve with endpoints being $1$ apart. This contradicts the minimality of $\gamma$.
    
    \item Otherwise, $|c_2 - c_1| \geq 1$, and hence, $|p_2 - p_1|\geq 1$. But in this case $\gamma([x_1, x_2])\subset\gamma([0, \ell])$ is, again, a sparse-centered curve with length strictly less than $\ell$, which contradicts the minimality of $\gamma$ just as in the first case.
\end{itemize}

In both cases we arrive at a contradiction, which finishes the proof of the observation.
\end{proof}

Denote the set $\left\{\dot{\gamma}(t)\,\colon\,t\in(0, \ell)\setminus\{t_1, \ldots, t_m\}\right\}$ of all directions along $\gamma$ by $S$.
We can introduce a measure $\mu$ on the Borel subsets of the unit circle, assigning $\mu(s)$ to be the length of $\{\gamma(t)\,\colon\,\dot{\gamma}(t)\in s\}$.

\begin{corollary}
$\mu(S) = \ell$.
\end{corollary}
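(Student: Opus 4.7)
The plan is to read the corollary off directly from the previous observation together with the natural parametrization. By that observation, $\dot\gamma$ is injective on $U := (0,\ell)\setminus\{t_1,\ldots,t_m\}$, and by the very definition of $S$ it is a surjection from $U$ onto $S$. So the preimage $\dot\gamma^{-1}(S)$ is exactly $U$, a subset of $[0,\ell]$ with Lebesgue measure $\ell$.

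Translating this through the definition of $\mu$, my goal becomes showing that the planar set $\Sigma := \{\gamma(t) : \dot\gamma(t)\in S\} = \gamma(U)$ has length $\ell$. I would first note that, by the natural parametrization, the total arc length traced by $\gamma|_U$ equals $\ell$ counted with multiplicity. So the only thing to check is that this equals the genuine (one-dimensional Hausdorff) measure of $\Sigma$, i.e.\ that self-intersections contribute nothing.

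For this I would argue that $\gamma|_U$ is injective away from a finite set. On each open piece $(t_{i-1},t_i)$, the map is an embedding because $\dot\gamma(t)$ determines the position on the fixed unit circle around $c_i$. For two distinct arcs, there are two cases. If their centers coincide, the injectivity of $\dot\gamma$ forces their angular (equivalently, direction) ranges to be disjoint, so their images are disjoint as well. If their centers differ, condition (d) gives $|c_i-c_j|\ge 1$, so the two unit circles meet in at most two points, hence the two arcs meet in at most two points. Since there are only finitely many arcs, the total self-intersection set of $\gamma|_U$ is finite, of length zero, and thus $\Sigma$ has length $\ell$.

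Putting these pieces together yields $\mu(S) = \ell$. The argument has no real obstacle; the only subtlety is the verification that self-overlaps of the curve do not shorten the measured set, which is precisely handled by the combination of the injectivity of $\dot\gamma$ (for arcs sharing a center) and condition (d) (for arcs with distinct centers).
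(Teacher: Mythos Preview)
Your argument is correct. The paper states this corollary without proof, immediately after the injectivity observation, and your fleshing-out is in line with the intended reading: taking ``length'' to mean one-dimensional Hausdorff measure of the image set, the only thing to rule out is positive-measure self-overlap of $\gamma|_U$, which you handle cleanly (same-center arcs via the injectivity of $\dot\gamma$, distinct-center arcs via the at-most-two-point intersection of distinct unit circles).
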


Let us note that $$b - a = \int\limits_0^{\ell}\dot{\gamma}(t)\,\mathrm{d}t = \int\limits_Sv\,\mathrm{d}\mu(v).$$

Let $\proj_{ab}(v)$ be the projection of vector $v$ on the line through $a$ and $b$. Since $\gamma(t)$ is parametrized naturally, $|b - a|$ does not exceed

$$\left|\int\limits_Sv\,\mathrm{d}\mu(v)\right| = \left|\int\limits_S\proj_{ab}(v)\,\mathrm{d}\mu(v)\right| \leq \int\limits_{-\ell/2}^{\ell/2}\cos(\varphi)\,\mathrm{d}\varphi = 2\sin(\ell/2).$$

\medskip Since $|b - a| = 1$, we have $\sin(\ell/2)\geq 1/2$ or $\ell\geq\pi/3$, which finishes the proof of Lemma~\ref{lemma:master}.

\begin{remark}
The requirement of the curve going counterclockwise is crucial. For example, otherwise the curve can be composed of two arcs of length $2\arcsin(1/4)$ each, one going clockwise and the other smoothly continuing the first in the counterclockwise direction. The centers of the corresponding disks are ``at different sides'' of the curve, and, quite clearly, are sufficiently far from each other; see Figure~\ref{fig:incorrect-curve}.
\end{remark}

\begin{figure}[h!]
    \centering
    \includegraphics{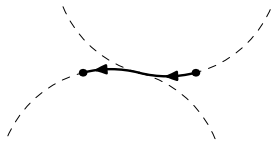}
    \captionsetup{width=.7\textwidth}
    \caption{An example of an invalid curve}
    \label{fig:incorrect-curve}
\end{figure}

\section{Building a curve in Lemma \ref{lemma:good_curve}}

Our current goal is to construct a sparse-centered curve containing the boundary $\partial S$. In Sections~5 and~6 we show that it is sufficiently short.
In the simplest case, we could just use $\partial S$ as such a curve, but in case if $S$ is not simply connected, we cannot do this; see Figure~\ref{fig:partial-s}.

The packing $\mathcal{P}$ of kissing radius $2$ corresponds to a directed plane tree (a planar drawing of a tree) as follows; see Figure~\ref{fig:tree}. The vertices of the tree are the centers of the disks of $\mathcal{P}$. For each disk, except the source, its vertex is connected to the vertex of its parent by an edge, we consider all edges as unit vectors directed from the parent towards its child. The resulting graph is a tree as there is a path from any disk to the source and it has no undirected cycles; otherwise the greatest-layer disk of this cycle has two parents. Since $\mathcal{P}$ is a packing, all edges have unit length and the distance between any two vertices is at least $1$. This implies that no two edges intersect, except, possibly, at their common endpoint. We call this graph the \textit{$\mathcal{P}$-tree}. It can be seen that the $\mathcal{P}$-tree is a subgraph of the minimum-distance graph of the centers of the disks.

\begin{figure}[h!]
    \centering
    \begin{subfigure}[t]{.48\textwidth}
    \includegraphics[width=.95\textwidth]{pics/tree.mps}
    \caption{The $\mathcal{P}$-tree}
    \label{fig:tree}
    \end{subfigure}
    \begin{subfigure}[t]{.48\textwidth}
    \includegraphics[width=.95\textwidth]{pics/traversal.mps}
    \caption{A traversal example. Some disks are counted multiple times, one time per visiting the corresponding vertex}
    \label{fig:traversal}
    \end{subfigure}
    \caption{}
\end{figure}

Let us fix a traversal of the $\mathcal{P}$-tree; see Figure~\ref{fig:traversal}.
Since it is a plane tree, we can consider it as a planar graph with the only face. Traversing the boundary of this face, we get a cyclic ordering $(c_1, c_2, \ldots, c_n)$ of the set of vertices with multiplicities. We assume that all indices are taken modulo $n$. The number of times a vertex occurs in the ordering is its degree\footnote{this ordering is basically a depth-first search (DFS) traversal of the configuration tree.}.
This implies that the center of every $2$-disk occurs exactly once in this order.
Let $(D_1, \ldots, D_n)$ be the corresponding ordering of the disks from $\mathcal{P}$. 
Also, let $B_1$, \ldots, $B_n$ be the open disks of unit radius centered respectively at $c_1$, \ldots, $c_n$. 
Recall that $\bigcup_{i=1}^nB_i = S$.

Given $i,j\in[n]$, call a \textit{subsegment} $\mathcal{D}_{ij}$ (of our traversal) the sequence $(D_i, \ldots, D_j)$, if $i \leq j$, or $(D_i, \ldots, D_n, D_1, \ldots, D_j)$, otherwise. In this paper, we only consider subsegments where $D_i$ and $D_j$ are the only $2$-disks in $\mathcal{D}_{ij}$. For simplicity, we assume that $i\leq j$ in the rest of the paper.

It can be seen that $\mathcal{D}_{ij}$ consists of either $3$ or $5$ disks. Indeed, if $\parent(D_i) = \parent(D_j)$, or, equivalently, the source $D$ is not in $\mathcal{D}_{ij}$, then $\mathcal{D}_{ij} = (D_i, \parent(D_i), D_j)$. Otherwise, $$\mathcal{D}_{ij} = \left(D_i, \parent(D_i), D, \parent(D_j), D_j\right).$$

We assume that the $0$-disk $D$ is centered at the origin $c$. By analogy with $D$, denote by $B$ the disk of unit radius centered at $c$. For any $i$ such that $D_i$ is a $2$-disk, denote by $f_i$ the farthest point of the disk $B_i$ from the origin $c$.
Formally,

$$f_i = c_i\cdot\left(1 + \frac{1}{|c_i|}\right).$$

By $[ab)$ we denote the ray starting at point $a$ and passing through $b$. If $D_i$ and $D_j$ are two consecutive occurrences of $2$-disks, consider segments $c_ic_{i+1}$, \ldots, $c_{j-1}c_j$, and two rays $[c_if_i)$ and $[c_jf_j)$. Their union divide the plane into two parts; denote by $R_{ij}$ the closed one for which $(c_i, \ldots, c_j)$ is the clockwise ordering of vertices.
As we already know, each region contains either $3$ or $5$ vertices.

The plane is then partitioned into such regions (see Figure~\ref{fig:division-into-regions}), and for each of them we will construct a sparse-centered curve $\gamma_{ij}\subset R_{ij}$ with centers among $\{c_i, \ldots, c_j\}$, having endpoints $f_i$ and $f_j$ and containing $\partial S\cap R_{ij}$. After this, we consider $\gamma$ as the union of all $\gamma_{ij}$.

\begin{figure}[h!]
    \centering
    \begin{subfigure}{.4\textwidth}
    \includegraphics[width=\textwidth]{pics/fig.mps}
    \caption{$\partial{S}$ is in bold}
    \label{fig:partial-s}
    \end{subfigure}
    \begin{subfigure}{.4\textwidth}
    \includegraphics[width=\textwidth]{pics/fig_reg.mps}
    \caption{Division into regions}
    \label{fig:division-into-regions}
    \end{subfigure}
    \caption{}
    \label{fig:regions}
\end{figure}



First, we prove the following auxiliary lemmas.

\begin{observation}
If $D_i$ is a $2$-disk then no point of the ray $[c_if_i)$ outside the segment $c_if_i$ belongs to any disk. In particular, $f_i\in\partial S$.
\label{lemma:far}
\end{observation}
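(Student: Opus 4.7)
The plan is to argue by contradiction. Assume some point $p = c_i + t\,(c_i/|c_i|)$ with $t > 1$ (a point of the ray strictly beyond $f_i$) lies in an open disk $B_k$, so $|p - c_k| < 1$, and derive an impossibility. Since $|p| = |c_i| + t > |c_i| + 1$, the triangle inequality gives $|c_k| > |c_i|$. I would then exploit two standing bounds coming from $D_i$ being a $2$-disk: $|c_i| > 1$ strictly (a $2$-disk cannot touch the source, otherwise it would itself be a $1$-disk), and any other $2$-disk $D_k$ satisfies $|c_k| \leq 2$ because its parent is a $1$-disk at unit distance from both the origin and $c_k$. The easy sub-cases are then immediate: if $k = i$ then $|p - c_i| = t > 1$; if $D_k$ is the source then $|p - c_k| = |p| > 2$; if $D_k$ is a $1$-disk then $|p - c_k| \geq |p| - 1 > |c_i| > 1$. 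Each contradicts $|p - c_k| < 1$.

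The substantive case is $D_k$ being a $2$-disk distinct from $D_i$. I would place coordinates so that $c_i = (a, 0)$ with $a = |c_i| \in (1, 2]$ and $f_i = (a+1, 0)$. The requirement that $B_k$ meets the open ray beyond $f_i$ places $c_k$ either in the strip $\{x \geq a+1,\, |y| < 1\}$ or in $B(f_i, 1) \cap \{x < a+1\}$. The strip is excluded at once, since $|c_k| \geq x_k \geq a + 1 > 2$ contradicts $|c_k| \leq 2$. In the half-disk region, subtracting $|c_k - c_i|^2 \geq 1$ from $|c_k - f_i|^2 < 1$ yields $x_k > a + \tfrac{1}{2}$, but no direct contradiction yet.

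The remaining (and central) step is to invoke the parent $c_{k,p}$ of $D_k$. Because $|c_{k,p}| = 1$ and $|c_{k,p} - c_k| = 1$, the parent is forced to be one of the two intersection points of the unit circles around $0$ and $c_k$; writing $\phi_k$ for the argument of $c_k$ and $\alpha_k = \arccos(|c_k|/2)$, $\alpha_i = \arccos(a/2)$, these candidate angles on the unit circle are $\phi_k \pm \alpha_k$. The plan is to show that both candidates lie strictly inside $B(c_i, 1)$, so neither can coexist with $D_i$ in the packing---contradiction. This reduces to $|\phi_k| + \alpha_k < \alpha_i$; expanding $\cos(\phi_k + \alpha_k) > a/2$, squaring, and using $|c_k - c_i|^2 \geq 1$ cuts the desired inequality down to $x_k^2 > 3 y_k^2$, which together with $y_k^2 < 1 - (x_k - a - 1)^2$ becomes the elementary claim $x_k^2 + 3(x_k - a - 1)^2 > 3$. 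This quadratic in $x_k$ attains its minimum $\tfrac{3(a+1)^2}{4}$, and this minimum exceeds $3$ precisely when $a > 1$---exactly our hypothesis.

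I expect the parent-exclusion step to be the main obstacle: each of the weaker inequalities alone ($|c_k| \leq 2$, $|c_k - c_i| \geq 1$, $|c_k - f_i| < 1$) permits configurations of $c_k$ that cannot actually occur in the packing, and the contradiction only surfaces once one demands that $D_k$ come attached with a $1$-disk parent. It is through this parent condition that the strict inequality $|c_i| > 1$---the defining feature separating a $2$-disk from a $1$-disk---enters the argument in an essential way. Finally, the ``in particular'' claim $f_i \in \partial S$ follows automatically from the main assertion, since if $f_i$ lay in some open $B_k$ then by openness a whole neighbourhood of $f_i$ would too, in particular points of the ray strictly beyond $f_i$, contradicting what has just been proved.
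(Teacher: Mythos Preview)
Your argument is correct, but the paper takes a much shorter angular route that avoids coordinates and the layer-by-layer case split. Given a point $g_i$ on the ray beyond $f_i$ lying in some unit-radius disk with center $c'_k$, the paper walks the $\mathcal{P}$-tree path $c = c'_0, c'_1, \ldots, c'_k$ (with $k\le 2$) from the origin to $c'_k$. In each triangle $c'_{j}c'_{j+1}c_i$ all sides have length at least $1$ while $|c'_{j+1}-c'_j|=1$, so the angle at $c_i$ is at most $\pi/3$; likewise the angle between $c'_k - c_i$ and $g_i - c_i$ is strictly below $\pi/3$ because $|g_i - c'_k| < 1$. Summing the at most three such angles gives strictly less than $\pi$, yet the angle between $c'_0 - c_i = -c_i$ and $g_i - c_i$ is exactly $\pi$: contradiction. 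This single estimate dispatches the source, the $1$-disks, and the $2$-disks in one stroke, whereas your approach isolates the $2$-disk case and then has to pin down the parent via an explicit coordinate computation. The paper's version also yields, at no extra cost, the corollary that the angle between $c_i$ and $c_k - c_i$ never exceeds $2\pi/3$ for \emph{every} center $c_k$; this is recorded for later use in Lemma~\ref{lemma:E-is-connected}, and your route does not supply it directly.
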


\begin{proof}
Let $g_i$ be a point of the ray $[c_if_i)$ outside the segment $c_if_i$. Suppose that $g_i$ lies in $S$.
Assume that $g_i$ is covered by some disk $D'_k$, and let $D = D'_0$, \ldots, $D'_k$ ($1\leq k\leq 2$) be the disks corresponding to the path in the $\mathcal{P}$-tree from the origin to the center of $D'_k$; denote by $c'_j$ the center of $D'_j$.
Since for any $j\in[k - 1]$, all sides of the triangle $c'_jc'_{j+1}c_i$ have lengths at least $1$ and $|c'_{j+1} - c'_j| = 1$, the angle between the vectors $c'_j - c_i$ and $c'_{j+1} - c_i$ is at most $\pi/3$. Similarly, the angle between $c'_k - c_i$ and $g_i - c_i$ is strictly less than $\pi/3$. Since $k\leq 2$, these inequalities contradict the fact that the angle between $c'_0 - c_i$ and $g_i - c_i$ equals $\pi$. Thus, $g_i\notin S$.

Since all points of the segment $c_if_i$ but $f_i$ clearly lie in $B_i\subset S$, we have $f_i\in\partial{S}$.

Here we proved, in particular, that for any $c_k$ the angle between $c_i$ and $c_k - c_i$ does not exceed $2\pi/3$. We will use this in Lemma~\ref{lemma:E-is-connected}.
\end{proof}


\begin{observation}
Let $\mathcal{D}_{ij}$ be a subsegment. Let $S_{ij}$ be the union of $B_i$, \ldots, $B_j$. Then $S\cap R_{ij} = S_{ij}\cap R_{ij}$; see Figure~\ref{fig:only_inner}.

\label{lemma:only_inner}
\end{observation}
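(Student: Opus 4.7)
The inclusion $S_{ij}\cap R_{ij}\subseteq S\cap R_{ij}$ follows immediately from $S_{ij}\subseteq S$. For the converse, the plan is to take an arbitrary $p\in S\cap R_{ij}$, pick $k$ with $p\in B_k$, and find some $l\in\{i,\ldots,j\}$ with $p\in B_l$. If $c_k$ already coincides (as a point) with one of $c_i,\ldots,c_j$ there is nothing to do, so I assume the contrary.

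I would first argue that $c_k$ lies outside $R_{ij}$: the boundary $\partial R_{ij}$ is a sub-walk of the outer-face walk of the planarly embedded $\mathcal{P}$-tree, closed up by the two rays $[c_if_i),[c_jf_j)$ whose tails beyond the unit segments $[c_i,f_i],[c_j,f_j]$ are disjoint from $S$ by Observation~\ref{lemma:far}. Planarity then forces the tree vertices lying inside $R_{ij}$ to be exactly $c_i,\ldots,c_j$. Consequently the segment $[p,c_k]$ meets $\partial R_{ij}$ at some point $q$, and I split into two sub-cases by the type of boundary piece containing $q$.

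Both sub-cases reduce to the same triangle-inequality computation. I work in local coordinates that place the piece containing $q$ along $\{(x,0):0\leq x\leq 1\}$, with $p$ in $\{y>0\}$ and $c_k=(k_x,k_y)$ in $\{y<0\}$, and write $q=(q_x,0)$. In the ray case the relevant subsegment center is $c_{\mathrm{near}}=c_i$ (since $f_i$ is not a disk center), so $|q-c_{\mathrm{near}}|=q_x$; in the tree-edge case it is $c_{\mathrm{near}}\in\{c_\alpha,c_{\alpha+1}\}$ according to whether $q_x\leq\tfrac12$ or $q_x\geq\tfrac12$, so $|q-c_{\mathrm{near}}|=\min(q_x,1-q_x)$. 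Combining $|c_k|\geq 1$ with the angular restriction $k_x\leq|c_k|/2$ from Observation~\ref{lemma:far} in the ray case, or with the packing inequality $|c_k-(1,0)|\geq 1$ (equivalently, $|c_k|^2\geq 2k_x$) in the tree-edge case, a short arithmetic check yields $|q-c_k|\geq|q-c_{\mathrm{near}}|$. Plugging this into the triangle inequality together with $|p-q|+|q-c_k|=|p-c_k|<1$ gives $|p-c_{\mathrm{near}}|<1$, so $p$ lies in the unit disk around $c_{\mathrm{near}}$, which is $B_l$ for some $l\in\{i,\ldots,j\}$. Hence $p\in S_{ij}$.

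The step I expect to require the most care is the topological assertion that $c_k$ lies outside $R_{ij}$: it relies both on the planar embedding of the $\mathcal{P}$-tree and on Observation~\ref{lemma:far} to exclude disk centres from the tails of the two rays. Once that is granted, the remaining triangle-inequality argument is elementary and almost identical in the two sub-cases.
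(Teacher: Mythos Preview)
Your argument is correct. Both you and the paper observe that the centre $c'$ of a disk covering $p$ must lie outside $R_{ij}$ (the paper simply asserts this without further comment), so the segment $[p,c']$ meets $\partial R_{ij}$, and then split into the tree-edge and ray sub-cases. Where the two diverge is in how the crossing is exploited. The paper applies the crossing-diagonals inequality: if $[p,c']$ crosses $[c_l,c_{l+1}]$ (respectively $[c_i,f_i]$, the latter reduction using Observation~\ref{lemma:far} just as you do), then
\[
|p-c_l|+|c'-c_{l+1}| \;<\; |p-c'|+|c_l-c_{l+1}| \;<\; 2,
\]
so one of the two left-hand summands is $<1$; the packing condition $|c'-c_{l+1}|\geq 1$ (respectively $f_i\in\partial S$) rules out the second, forcing $p\in B_l$ (respectively $p\in B_i$). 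This avoids coordinates entirely and is a one-line computation per sub-case. Your route via the pointwise comparison $|q-c_k|\geq|q-c_{\mathrm{near}}|$ at the crossing point $q$ reaches the same conclusion but needs the coordinate set-up together with the auxiliary inequalities $|c_k|^2\geq 2k_x$ in the edge case and $k_x\leq|c_k|/2$ (extracted from the proof of Observation~\ref{lemma:far}) in the ray case. Both arguments are short; the paper's crossing-diagonals trick is the cleaner and more portable of the two.
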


\begin{figure}[h!]
    \centering
    \includegraphics{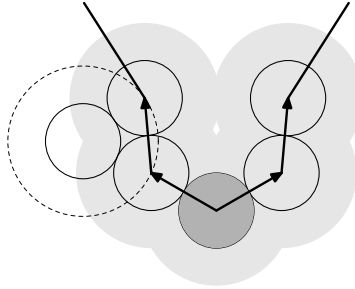}
    \captionsetup{width=.7\textwidth}
    \caption{The part of the boundary of the dashed disk in the region is completely in the gray area, and therefore, add nothing to the border}
    \label{fig:only_inner}
\end{figure}


\begin{proof}
Suppose that the set $(S\setminus S_{ij})\cap R_{ij}$ is not empty and $p$ is any point of this set.
Let $c'$ be the center of any disk containing the point $p$. Since $c'\not\in R_{ij}$ and $p\in R_{ij}$, the segment $c'p$ intersects $\partial{R_{ij}}$. Consider two cases.

If $c'p$ intersects some segment $c_kc_{k+1}$ then by the triangle inequality one of the segments $c_kp$ and $c_{k+1}c'$ is strictly shorter than $1$, which cannot be the case.

Otherwise $c'p$ intersects one of the rays $[c_if_i)$ and $[c_jf_j)$. Without loss of generality assume that it is $[c_if_i)$. By Observation~\ref{lemma:far} one can assume that the intersection is on the segment $c_if_i$. Analogously, at least one of the segments $c_ip$ and $f_ic'$ is strictly shorter than $1$, which also leads to a contradiction.
\end{proof}


Denote $\partial S\cap R_{ij}$ by $\sigma_{ij}$.
We say that a disk $B_k$ is \textit{involved in $\sigma_{ij}$} if $\partial{B_k}\cap \sigma_{ij}$ is nonempty; that is, if there is a part of its boundary in $R_{ij}$ that is not covered by interiors of other disks. 
Note that $B_i$ and $B_j$ are always involved in $\sigma_{ij}$.
This follows from the fact that $f_i$ and $f_j$ are always points of the boundary.

\begin{claim}\label{lemma:can-construct-gamma-ij}
Let $\mathcal{D}_{ij}$ be a subsegment. Let $i = k_1 < \ldots < k_l = j$ be a sequence of indices of all disks from $\mathcal{D}_{ij}$ that are involved in $\sigma_{ij}$. Let $\gamma_{ij}$ be the curve constructed in the following way:
\begin{enumerate}
    \item $\gamma_{ij}$ starts at $f_i$ and goes counterclockwise along $\partial B_i$ until meeting $\partial B_{k_2}\cap R_{ij}$,
    \item Next, $\gamma_{ij}$ goes counterclockwise along $\partial B_{k_2}$ until meeting $\partial B_{k_3}$,
    \item Next, it goes in the following manner along $B_{k_3}$, \ldots, $B_{k_{l-1}}$,
    \item Finally, $\gamma_{ij}$ goes along $\partial B_j$, ending at $f_j$.
\end{enumerate}
Let $\gamma$ be the concatenation of all $\gamma_{ij}$. Then $\gamma$ is a well-defined sparse-centered curve containing $\partial S$.
\end{claim}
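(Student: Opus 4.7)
The plan is to address each of the claim's assertions in turn: well-definedness of $\gamma_{ij}$ as a curve, the sparse-centered property of $\gamma_{ij}$, the concatenation of the pieces into a single $\gamma$, and the containment $\partial S\subset\gamma$.

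I would first use Observation~\ref{lemma:only_inner} to reduce to working inside $R_{ij}$: since $S\cap R_{ij} = S_{ij}\cap R_{ij}$ with $S_{ij} = B_i\cup\cdots\cup B_j$, the arc $\sigma_{ij} = \partial S\cap R_{ij}$ is a finite union of circular arcs, each lying on the boundary of some involved disk $B_{k_s}$. The core geometric claim is that traversing $\sigma_{ij}$ counterclockwise from $f_i$ to $f_j$ visits the involved disks in the order $k_1 < k_2 < \cdots < k_l$, each contributing a single arc. This reflects the fact that the $\mathcal{P}$-tree is planar and the DFS traversal visits children in the same cyclic order as the planar embedding; because $\mathcal{D}_{ij}$ has only $3$ or $5$ disks, a small case analysis suffices.

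Given this ordering, the four conditions for $\gamma_{ij}$ to be sparse-centered are routine: (a) each piece is a unit-radius arc by construction; (b) it is naturally parametrized by arclength; (c) each arc is traversed counterclockwise about its center $c_{k_s}$; and (d) any two distinct centers $c_{k_s}$, $c_{k_t}$ come from the packing $\mathcal{P}$ of unit-diameter disks, so either they coincide or $|c_{k_s} - c_{k_t}|\geq 1$. The coincidence case arises because a single vertex of the $\mathcal{P}$-tree may appear multiple times in the DFS order, and is explicitly allowed by condition~(d).

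For the concatenation $\gamma$: adjacent regions share a bounding ray $[c_jf_j)$ for a $2$-disk $D_j$, so $\gamma_{ij}$ ends at $f_j$ exactly where the next piece begins, making $\gamma$ continuous and closed. For $\partial S\subset\gamma$, the regions $R_{ij}$ cover the plane; by Observation~\ref{lemma:far} the portions of the separating rays beyond $f_i$ contain no points of $S$, so $f_i$ is the only point of each ray on $\partial S$ and it is captured as an endpoint of the adjacent pieces; and within each region $\sigma_{ij}\subset\gamma_{ij}$ by construction. Condition (d) for the full $\gamma$ follows from the same packing argument applied globally.

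The main obstacle I anticipate is making the ordering claim precise: verifying that the involved disks appear along $\sigma_{ij}$ in the strict index order $k_1,\ldots,k_l$ with each contributing only one arc, and that at each transition the curve moves to the correct intersection point inside $R_{ij}$ (rather than to a second intersection point lying outside the region or yielding a backtrack). Case analysis over the $3$-disk and $5$-disk configurations of $\mathcal{D}_{ij}$, together with the unit lower bound on center-to-center distances and the angular constraint extracted at the end of the proof of Observation~\ref{lemma:far}, appears to be the natural route, but some care is needed to rule out degenerate sub-configurations.
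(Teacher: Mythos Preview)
Your outline correctly isolates the crux---the ordering of involved disks and the single-arc property---but your proposed route diverges substantially from the paper's and underestimates the work. The paper does \emph{not} argue by direct case analysis on the $3$- and $5$-disk configurations. Instead, Appendix~\ref{section:proof-of-claim} builds a Delaunay triangulation of $\{c_1,\ldots,c_n\}$, forms the set $E$ consisting of the $\mathcal{P}$-tree together with every Delaunay triangle of circumradius less than $1$, and proves (Lemma~\ref{lemma:E-is-connected}) that $E$ is simply connected. The counterclockwise boundary traversal of $E$ then \emph{produces} the cyclic sequence of centers, and Lemma~\ref{lemma:delaunay-covers-neighbors} shows that $B_k$ is involved in $\sigma_{ij}$ precisely when $c_k$ appears on this boundary. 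Thus the ordering claim is obtained structurally---from simple connectivity of $E$---rather than from angular constraints checked configuration by configuration. Even so, each of Lemmas~\ref{lemma:E-is-connected} and~\ref{lemma:delaunay-covers-neighbors} already requires four nontrivial geometric cases; your ``small case analysis'' without this scaffolding would be considerably longer.

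There is also a genuine gap in your framing. You describe the key step as ``traversing $\sigma_{ij}$ counterclockwise from $f_i$ to $f_j$'', implicitly assuming $\sigma_{ij}$ is a single arc. But the paper notes at the start of Section~4 (and illustrates in Figure~\ref{fig:partial-s}) that $S$ need not be simply connected, so $\sigma_{ij}$ may be disconnected; in that case $\gamma_{ij}$ is not $\sigma_{ij}$ but a strictly larger curve, parts of which run through the interior of $S$, and one must show it nevertheless contains every component of $\sigma_{ij}$. Your plan gives no mechanism for this. The Delaunay route handles it because the arcs $s_k$ are defined via the boundary of the simply-connected set $E$, and the paper then proves separately (end of Lemma~\ref{lemma:delaunay-covers-neighbors}) that $s_k\setminus(B_{i_{k-1}}\cup B_{i_{k+1}})$ lies entirely outside $S$, which is exactly the containment $\sigma_{ij}\subset\gamma_{ij}$.
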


The proof is given in Appendix~\ref{section:proof-of-claim}.

\section{The local inequality implies Lemma~\ref{lemma:good_curve}}

The goal of this section is to formulate an upper bound on the length of each $\gamma_{ij}$ and to show how it implies Lemma~\ref{lemma:good_curve}.

\subsection{Angular notation}

Recall that the edges of the $\mathcal{P}$-tree are directed from parents. Consider these edges as unit vectors and denote by $E$ the set of all these edges.
For any $2$-disk $D_i$ let $v_i = c_{i+1} - c_i$ and $u_i = c_{i+1} - c$.

We define two functions $\angle(\cdot, \cdot)\colon E^2\to\mathbb{R}$ and $\angccw(\cdot, \cdot)\colon (\mathbb{R}^2\setminus\{c\})^2\to[0, 2\pi)$. Essentially, each of them is a directed angle between two vectors; that is, the angle the first vector needs to be rotated by counterclockwise in order to become the positive scalar multiple of the second one. However, we need to define the range of values for $\angle(\cdot, \cdot)$ and we do it in the following way (here $i$ and $j$ are indices of some $2$-disks).

\begin{enumerate}[label={\{\arabic*\}}]
\item $\angle(u_i, u_j)\in[0, 2\pi)$; \label{rule:uu}
\item $\angle(u_i, v_i)\in(-\pi, \pi)$; \label{rule:uivi} 
\item $\angle(v_i, u_i) = -\angle(u_i, v_i)$; \label{rule:viui}
\item $\angle(v_i, u_j) = \angle(v_i, u_i) + \angle(u_i, u_j)$, and $\angle(u_j, v_i) = \angle(u_j, u_i) + \angle(u_i, v_i)$; \label{rule:uivj}
\item $\angle(v_i, v_j) = \angle(v_i, u_i) + \angle(u_i, u_j) + \angle(u_j, v_j)$. \label{rule:vv}
\end{enumerate}

The $\angle(\cdot, \cdot)$ operator is clearly well-defined.



\begin{claim}
\label{lemma:can-construct}

Let $i$ and $j$ be two consecutive $2$-disks and let $k$ be the number of occurrences of the source in $\mathcal{D}_{ij}$. Denote $\angle(v_i, v_j)$ by $\varphi_{ij}$ and $\angccw(c_i, c_j)$ by $\alpha_{ij}$. If $k = 1$, also denote $\angle(u_i, u_j)$ by $\psi_{ij}$. Then we have

\begin{equation*}
\label{eq:curve_bound}
|\gamma_{ij}|\leq\begin{cases}
\displaystyle
\varphi_{ij} + \alpha_{ij}, & \text{when }k = 0, \\
3\psi_{ij} - \frac{2\pi}{3} + 2\varphi_{ij} + \alpha_{ij}, & \text{when }k = 1.
\end{cases}
\end{equation*}
\end{claim}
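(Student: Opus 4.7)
The plan is to derive an exact formula for $|\gamma_{ij}|$ and then bound it using angular constraints from the ``involvement'' condition.

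Let $i = k_1 < \cdots < k_l = j$ index the involved disks and put $d_m := |c_{k_{m+1}} - c_{k_m}|$. Tracking the tangent vector along $\gamma_{ij}$: on each arc around $c_{k_m}$ it rotates counterclockwise by the arc length, while at each transition at the outer intersection $p \in R_{ij}$ of $\partial B_{k_m}$ and $\partial B_{k_{m+1}}$ it jumps clockwise by $\arccos(1 - d_m^2/2)$, the apex angle at $p$ in the isoceles triangle with unit legs $c_{k_m}p$ and $c_{k_{m+1}}p$. Since the tangents at $f_i, f_j$ are $\pi/2$-rotations of $c_i/|c_i|$ and $c_j/|c_j|$, the net counterclockwise rotation equals $\alpha_{ij}$, giving
\[
|\gamma_{ij}| = \alpha_{ij} + \sum_{m=1}^{l-1} \arccos\!\left(1 - \tfrac{d_m^2}{2}\right).
\]
The angular meaning of ``involvement'' is as follows: for an intermediate disk $B_r$ on the path (both neighbors at unit distance), the arc $\partial B_r \cap R_{ij}$ is covered by each neighbor on a sub-arc of angular length $\pi/3$, so $B_r$ is involved iff the interior angle at $c_r$ (measured inside $R_{ij}$) exceeds $2\pi/3$. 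When $k = 1$ this yields $\psi_{ij} > 2\pi/3$ if the source disk $B$ is involved, and $|\delta_r| > 2\pi/3$ if $B_{P_r}$ is involved ($r \in \{i,j\}$), where $\delta_r := \angle(u_r, v_r)$ is the signed angle at $P_r$ in triangle $cP_rc_r$. Moreover, the CCW DFS orientation places $c_i$ on the CW side and $c_j$ on the CCW side of their respective lines $cP_r$, forcing $\delta_i = -|\delta_i|$ and $\delta_j = +|\delta_j|$; the decomposition rule for $\angle(v_i, v_j)$ then gives $\varphi_{ij} = -\delta_i + \psi_{ij} + \delta_j = |\delta_i| + \psi_{ij} + |\delta_j|$.

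For case $k = 0$ (path $c_i, P, c_j$): when $l = 2$ the single transition contributes $\arccos(1 - |c_i - c_j|^2/2)$, which by the law of cosines equals the angle at $P$ in triangle $c_iPc_j$ and matches $\varphi_{ij} = \angle(P - c_i, P - c_j)$ exactly; when $l = 3$, the sum equals $2\pi/3$ and the involvement of $B_P$ forces $\varphi_{ij} > 2\pi/3$. Either way, $|\gamma_{ij}| \leq \alpha_{ij} + \varphi_{ij}$.

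For case $k = 1$ the key observation is that the transition sum never exceeds $4\pi/3$: with all four unit-distance transitions present it equals $4\pi/3$, and each ``skip'' replaces two unit transitions by a single transition of value $\arccos(1 - d^2/2) \leq 2\pi/3$ (since a non-involved intermediate has interior angle at most $2\pi/3$). So it suffices to show $4\pi/3 \leq 3\psi_{ij} - 2\pi/3 + 2\varphi_{ij}$; substituting the decomposition above, this becomes $2\pi \leq 5\psi_{ij} + 2|\delta_i| + 2|\delta_j|$, which follows from the universal lower bounds $\psi_{ij}, |\delta_i|, |\delta_j| \geq \pi/3$ (coming from the non-overlap conditions $|P_i - P_j|, |c_i|, |c_j| \geq 1$). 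The main technical obstacle is the careful sign analysis of $\delta_i, \delta_j$ dictated by the CCW DFS orientation, and verifying that the outer intersection at each transition indeed lies in $R_{ij}$ (so the tangent jump is clockwise), as needed for the length formula to carry the stated sign.
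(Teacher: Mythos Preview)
Your length formula $|\gamma_{ij}| = \alpha_{ij} + \sum_m \arccos(1 - d_m^2/2)$ matches the paper's decomposition $|\gamma_{ij}| = \alpha_{ij} + \Delta_{ij}$, and your treatment of the case $k=0$ is essentially the paper's Cases~0a/0b. The problem is entirely in the $k=1$ argument.

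Your strategy there is: (i) bound the transition sum by $4\pi/3$, and (ii) show $4\pi/3 \le 3\psi_{ij} - 2\pi/3 + 2\varphi_{ij}$ via the identity $\varphi_{ij} = |\delta_i| + \psi_{ij} + |\delta_j|$. Step~(i) is correct (it is the paper's Observation~6.2). Step~(ii) fails because the sign claim ``$\delta_i = -|\delta_i|$, $\delta_j = +|\delta_j|$'' is false in general. The DFS orientation does \emph{not} pin the sign of $\delta_r = \angle(u_r, v_r)$: if the $1$-disk $P_i$ has $D_i$ as its only child (which is allowed---only childfree $1$-disks were removed), then $c_i$ may sit on either side of the line $cP_i$, and $\delta_i$ can have either sign. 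Concretely, the paper's Case~1a (no intermediate disk involved) explicitly treats configurations with $\psi + \varphi < 2\pi/3$; were your identity valid one would have $\varphi \ge \psi + 2|\delta| \ge \pi$, an immediate contradiction. The universal bound $|\delta_r| \ge \pi/3$ is fine, but without the sign it gives nothing for $\varphi_{ij}$.

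Because the right-hand side $3\psi - 2\pi/3 + 2\varphi$ can be as small as $\pi/3$ while a single transition jump can be close to $\pi$, the crude bound $4\pi/3$ on the transition sum is genuinely too weak. The paper instead performs a case analysis on the set of involved intermediate disks (Cases~1a--1f). The delicate case is precisely 1a, where one needs the trigonometric estimate $2\sin(\delta/2) = |c_i - c_j| \le |u_i - u_j| + |v_i - v_j| = 2\sin(\psi/2) + 2\sin(\varphi/2)$ together with a concavity argument; in the remaining cases the involvement conditions feed back lower bounds on $\psi$ and $\varphi$ (via Observations~6.1, 6.4, 6.5 and an application of Lemma~2.1 to an auxiliary curve) that are strong enough. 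Your proposal is missing exactly this case-dependent analysis.
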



If we denote by $\mathcal{R}$ the set of all regions $R_{ij}$, then we obtain \begin{equation}\sum_{R_{ij}\in\mathcal{R}}\varphi_{ij} = \sum_{R_{ij}\in\mathcal{R}}\alpha_{ij} = \sum_{R_{ij}\in\mathcal{R}}\psi_{ij} = 2\pi.\label{eqn:sums-of-angles-are-2pi}
\end{equation}
Indeed, the vectors $u_i$ divide $2\pi$ into angles $\psi_{ij}$, and the rays from $c$ through the centers of $2$-disks divide $2\pi$ into angles $\alpha_{ij}$.

To establish the equality for $\varphi_{ij}$, let $v_{i_1}$, \ldots, $v_{i_m}$ be the vectors outgoing from any $1$-disk, and let $u = u_{i_1} = \ldots = u_{i_m}$ be the vector from the source to it.
We have
\begin{align*}
\angle(v_{i_{m-1}}, v_{i_m}) + \angle(v_{i_m}, u) & \stackrel{\ref{rule:vv}}{=} \big(\angle(v_{i_{m-1}}, u) + \angle(u, u) + \angle(u, v_{i_m})\big) + \angle(v_{i_m}, u) \\ & \stackrel{\ref{rule:viui}}{=} \angle(v_{i_{m-1}}, u).
\end{align*}
Therefore, we obtain
\begin{multline*}
\angle(u, v_{i_1}) + \angle(v_{i_1}, v_{i_2}) + \ldots + \angle(v_{i_{m-1}}, v_{i_m}) + \angle(v_{i_m}, u) \\
= \angle(u, v_{i_1}) + \angle(v_{i_1}, v_{i_2}) + \ldots + \angle(v_{i_{m-2}}, v_{i_{m-1}}) + \angle(v_{i_{m-1}}, u).
\end{multline*}
Thus, by induction,
$$\angle(u, v_{i_1}) + \angle(v_{i_1}, v_{i_2}) + \ldots + \angle(v_{i_{m-1}}, v_{i_m}) + \angle(v_{i_m}, u) = 0.$$


One can see that after expanding $\sum\varphi_{ij}$ by the definition (see~\ref{rule:vv}) and subtracting the left hand side of the equality above for all $1$-disks, we obtain exactly $\sum\psi_{ij}$. Indeed, after canceling everything out, the only summands that remain are $\angle(u_i, u_j) = \psi_{ij}$.

\begin{proof}[Claim~\ref{lemma:can-construct} implies Lemma~\ref{lemma:good_curve}]


%
Denote by $\mathcal{R}_0$ the set of all regions $R_{ij}$, for which the source does not occur in $\mathcal{D}_{ij}$, that is, $\mathcal{D}_{ij}$ consists of three disks. Denote the set of all other regions by $\mathcal{R}_1$. In particular, $\mathcal{R} = \mathcal{R}_0\cup\mathcal{R}_1$ and $\varphi_{ij}\geq\pi/3$ for $R_{ij}\in\mathcal{R}_0$. Also, $|\mathcal{R}_1| = C_1$ and $|\mathcal{R}_0| = C_2 - C_1$.
Let us bound the length of $\gamma$ as the sum of lengths $\gamma_{ij}$ using Claim~\ref{lemma:can-construct}.
\begin{align*}
    |\gamma| &\leq \sum_{R_{ij}\in\mathcal{R}_0}(\varphi_{ij} + \alpha_{ij}) + \sum_{R_{ij}\in\mathcal{R}_1}\left(3\psi_{ij} - \frac{2\pi}{3} + 2\varphi_{ij} + \alpha_{ij}\right)  \\
    &= 2\sum_{R_{ij}\in\mathcal{R}}\varphi_{ij} + \sum_{R_{ij}\in\mathcal{R}}\alpha_{ij} + 3\sum_{R_{ij}\in\mathcal{R}}\psi_{ij} - \frac{2\pi}{3}\cdot C_1 - \sum_{R_{ij}\in\mathcal{R}_0}\varphi_{ij}  \\
    &\stackrel{\eqref{eqn:sums-of-angles-are-2pi}}{\leq} 2\cdot 2\pi + 2\pi + 3\cdot 2\pi - \frac{2\pi}{3}\cdot C_1 - \frac{\pi}{3}\cdot (C_2 - C_1)  \\
    &= 12\pi - \frac{\pi}{3}(C_1 + C_2).
\end{align*}

The last inequality is equivalent to $C_1 + C_2 + \frac{|\gamma|}{\pi/3}\leq 36$.
\end{proof}


\subsection{Direction jump}

Instead of proving Claim~\ref{lemma:can-construct}, first we simplify its statement. For that, we introduce the concept of direction jump.

Let the sparse-centered curve $\gamma_{ij}$ be composed of $m$ arcs. Let $w_k^s$ and $w_k^f$ be the unit vectors from the center of the $k\textsuperscript{th}$ arc to its beginning and its end, respectively. In particular, $w_1^s = f_i - c_i$ and $w_m^f = f_j - c_j$. By the \emph{direction jump} between the $k\textsuperscript{th}$ and the $(k+1)\textsuperscript{th}$ arcs or the \emph{$k\textsuperscript{th}$ direction jump}, we define $\angccw(w_{k+1}^s, w_k^f)$; see Figure~\ref{fig:direction-jumps}. We denote the $k\textsuperscript{th}$ direction jump by $\dj_k$. The sum of all directed jumps $\sum_{k=1}^{m-1}\dj_k$ is denoted by $\Dj_{ij}$.

For example, on Figure~\ref{fig:direction-jumps} we have $\dj_1 = \dj_3 = \pi/3$, and $\dj_2 = \psi_{ij}$.

If $B_{k_1}$ and $B_{k_2}$ are two disks, we will call the \emph{direction jump between $B_{k_1}$ and $B_{k_2}$} the direction jump between the arcs of these disks which are present in the curve $\gamma_{ij}$. 

\begin{figure}[h!]
    \centering
    \includegraphics[width=.6\textwidth]{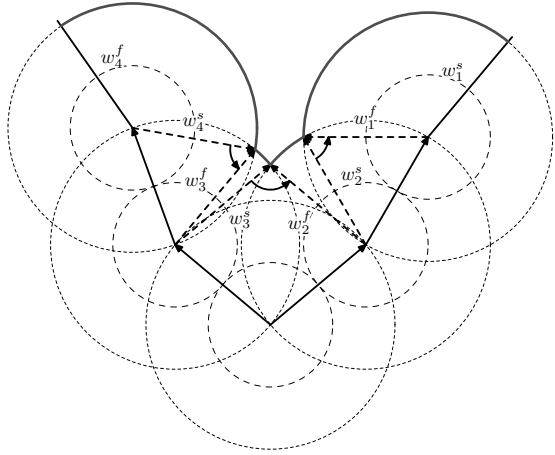}
    \caption{Direction jumps}
    \label{fig:direction-jumps}
\end{figure}

\begin{observation}
For the curve $\gamma_{ij}$ the following holds.
$$|\gamma_{ij}| = \Dj_{ij} + \alpha_{ij}.$$
\end{observation}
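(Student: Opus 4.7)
The proof plan is to decompose $|\gamma_{ij}|$ as a sum of arc lengths and then perform an angular telescoping. Since each arc of $\gamma_{ij}$ has unit radius, the length of the $k\textsuperscript{th}$ arc equals the counterclockwise angular measure from its initial to its final radial vector, namely $\theta_k := \angccw(w_k^s, w_k^f)$; summing gives $|\gamma_{ij}| = \sum_{k=1}^m \theta_k$.

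Next, I would track a continuous lift $\tilde{\phi}\colon[0,\ell]\to\mathbb{R}$ of the angle of the radial vector from the ``current'' arc center $c_k$ to $\gamma(t)$. On the interior of the $k\textsuperscript{th}$ arc, $\tilde{\phi}$ increases by exactly $\theta_k$, since the arc is directed counterclockwise. At each transition time $t = t_k$ the reference center jumps from $c_k$ to $c_{k+1}$ while $\gamma(t_k)$ stays fixed, so the radial vector jumps from $w_k^f$ to $w_{k+1}^s$; by the very definition $\dj_k = \angccw(w_{k+1}^s, w_k^f)$, this is a clockwise rotation by $\dj_k$, so I declare $\tilde{\phi}$ to drop by $\dj_k$ at that instant. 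The total change of $\tilde{\phi}$ across $[0, \ell]$ is then $\sum_k \theta_k - \sum_k \dj_k = |\gamma_{ij}| - \Dj_{ij}$.

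Finally, the starting radial vector $w_1^s = f_i - c_i$ points in the direction of $c_i$ (recall $f_i = c_i(1 + 1/|c_i|)$), and analogously $w_m^f$ points in the direction of $c_j$. Hence the net change of $\tilde\phi$ is $\angccw(c_i, c_j) = \alpha_{ij}$, yielding $|\gamma_{ij}| - \Dj_{ij} = \alpha_{ij}$ and therefore $|\gamma_{ij}| = \Dj_{ij} + \alpha_{ij}$. I expect the main subtle point to be verifying this as a real equality rather than merely a congruence modulo $2\pi$, that is, that the lift $\tilde\phi$ sweeps through $\alpha_{ij}$ without an extra full turn. This should follow from the fact that $\gamma_{ij}$ traces the portion of $\partial S$ inside the bounded region $R_{ij}$ monotonically from $f_i$ to $f_j$, which rules out any additional winding of the radial direction.
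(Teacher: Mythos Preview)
Your telescoping setup is correct and is essentially the same computation the paper carries out: the paper also moves a pair $(x,y)$ with $x$ the current arc center and $y$ on the curve, notes that the odd steps contribute $\sum_k \angccw(w_k^s,w_k^f)=|\gamma_{ij}|$ and the even steps subtract $\sum_k \dj_k=\Dj_{ij}$, and concludes that the net rotation of $y-x$ is $|\gamma_{ij}|-\Dj_{ij}$, which is congruent to $\alpha_{ij}$ modulo $2\pi$. So far your argument and the paper's coincide.

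The gap is in your last paragraph. You correctly flag the $2\pi$-ambiguity as the main subtle point, but your proposed resolution (``$\gamma_{ij}$ traces the portion of $\partial S$ inside $R_{ij}$ monotonically'') does not do the job. First, $\gamma_{ij}$ is \emph{not} just $\partial S\cap R_{ij}$: the whole reason for constructing $\gamma_{ij}$ is that $S$ may fail to be simply connected, so $\gamma_{ij}$ can contain extra arcs that are not on $\partial S$, and no monotonicity statement about $\partial S$ transfers automatically. Second, even if $\gamma_{ij}(t)$ were monotone in polar angle about the origin, that would not control the direction of the radial vector $y-x$ measured from the \emph{moving} arc center $x\in\{c_i,\ldots,c_j\}$, which is what your lift $\tilde\phi$ tracks. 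In the paper, this step is where almost all of the work goes: one fixes a direction and shows, by separate arguments for each of the possible arcs (those of $B_i$, $B_{i+1}$, $B$, $B_{j-1}$, $B_j$), that $y-x$ never points in that direction throughout the process, using the angular bounds coming from Lemma~\ref{lemma:delaunay-covers-neighbors} and the geometry of the $\mathcal{P}$-tree. Without an argument of comparable strength, your proposal establishes only $|\gamma_{ij}|\equiv\Dj_{ij}+\alpha_{ij}\pmod{2\pi}$.
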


\begin{proof}
Let $x = c_i$ and $y = f_i$. We are going to continuously move points $x$ and $y$ to $c_j$ and $f_j$, respectively. This process will be separated in $(2m-1)$ steps. We will track the rotation of the vector $y - x$ during each step and add these rotations up. We will also fix a direction and ensure that at no point of time during the process does $y - x$ aim at that direction. This will imply that the total rotation equals $\alpha_{ij}$, while we know a priori that the total rotation must equal $\alpha_{ij} + 2\pi k$ for some integer $k$.

The process looks as follows. At the first step we rotate $y$ counterclockwise about $x$, until $y - x = w_1^f$. During the second step, we rotate $x$ clockwise until $y - x = w_2^s$, and so on. Formally, at the $(2l-1)\textsuperscript{th}$ step we move $y$ counterclockwise in such a way that $y - x$ becomes $w_l^f$, and at the $2l\textsuperscript{th}$ step we move $x$ clockwise until $y - x = w_{l+1}^s$.

It is clear that each $(2k-1)\textsuperscript{th}$ step rotates $y - x$ by $\angccw(w_{l}^s, w_{l}^f)$, and each $2l\textsuperscript{th}$ step rotates it by $-\angccw(w_{l+1}^s, w_{l}^f)$. Therefore, if we know that the total rotation equals $\alpha_{ij}$, we show that
\begin{align*}
\alpha_{ij} & = \angccw(w_{1}^s, w_{m}^f) \\
& = \sum_{l=1}^{m}\angccw(w_{l}^s, w_{l}^f) - \sum_{l=1}^{m-1}\angccw(w_{l+1}^s, w_{l}^f) \\
& = |\gamma_{ij}| - \sum_{l=1}^{m-1}\dj_{l} = |\gamma_{ij}| - \Dj_{ij}.
\end{align*}

To prove that there is a direction which $y - x$ never aims at during the process, let us state a couple of inequalities. In particular, we claim the following.
\begin{itemize}
    \item If an arc of $B_i$ is present in $\gamma_{ij}$, then for each its point $p$ there holds $\angccw(f_i - c_i, p - c_i)\leq\pi$.
    \item If an arc of $B_{i+1}$ is present in $\gamma_{ij}$, then for each its point $p$ there holds $\angccw(c_i, p - c_{i+1})\leq\pi$.
    \item If an arc of $B_j$ is present in $\gamma_{ij}$, then for each its point $p$ there holds $\angccw(p - c_j, f_j - c_j)\leq\pi$.
    \item If an arc of $B_{j-1}$ is present in $\gamma_{ij}$, then for each its point $p$ there holds $\angccw(p - c_{j-1}, c_j)\leq\pi$.
    \item If an arc of $B_{i+2}=B_{j-2}$ is present in $\gamma_{ij}$, then for each its point $p$ there holds $\angccw(c_i, p) + \angccw(p, c_j) = \angccw(c_i, c_j) = \alpha_{ij}$.
\end{itemize}
If we prove this, then all directions during the process belong to the union of $\{p\,\colon\,\angccw(c_i, p)\leq\pi\}$ and $\{p\,\colon\,\angccw(p, c_j)\leq\pi\}$, which do not cover the set of all possible directions, as $\angccw(c_i, c_j)\neq 0$.

Since $|c_i - c|\geq |c_i - c_{i+1}| = |c_{i+1} - c|$, if $\angccw(f_i - c_i, c_{i+1} - c_i) > \pi$, then we have the inequality
$$\angccw(f_i - c_i, c_{i+1} - c_i) = \angccw(f_i - c_i, c - c_i) + \angccw(c - c_i, c_{i+1} - c_i)\leq\frac{4\pi}{3}.$$
As shown in the proof of Claim~\ref{lemma:can-construct-gamma-ij} (specifically, in Lemma~\ref{lemma:delaunay-covers-neighbors}), for any point $p$ of the arc of $B_i$ in $\gamma_{ij}$ we have $\angccw(p - c_i, c_{i+1} - c_i)\geq\pi/3$.
But then it follows from the definition of $\gamma_{ij}$ that
$$\angccw(f_i - c_i, p - c_i) = \angccw(f_i - c_i, c_{i+1} - c_i) - \angccw(p - c_i, c_{i+1} - c_i)\leq\frac{4\pi}{3} - \frac{\pi}{3} = \pi.$$
Similarly, if an arc of $B_j$ is present, then for each its point $p$ there holds $\angccw(p - c_j, f_j - c_j)\leq\pi$.

If $\angccw(c_i - c_{i+1}, c_{i+2} - c_{i+1})\leq\pi$, then for any point $p$ of the arc of $B_{i+1}$ in $\gamma_{ij}$ we have $\angccw(c_i, p - c_{i+1})\leq\angccw(c_i - c_{i+1}, c_{i+2} - c_{i+1})\leq\pi$. Suppose that $\angccw(c_i - c_{i+1}, c_{i+2} - c_{i+1}) > \pi$. Again, we have the inequalities $\angccw(c_i - c_{i+1}, p - c_{i+1})\geq\pi/3$ and $\angccw(p - c_{i+1}, c - c_{i+1})\leq\pi/3$. Since $|c_i - c|\geq |c_i - c_{i+1}| = |c_{i+1} - c|$, we have the inequality $\angccw(c_i - c_{i+1}, c_i - c)\leq\pi/3$. This means that
\begin{align*}
\frac{\pi}{3} - \frac{\pi}{3} & \leq\angccw(c_i - c_{i+1}, p - c_{i+1}) - \angccw(c_i - c_{i+1}, c_i) \\
& = \angccw(c_i, c - c_{i+1}) - \angccw(p - c_{i+1}, c_{i+1})\leq\frac{4\pi}{3} - \frac{\pi}{3},
\end{align*}
which implies that $\angccw(c_i, p - c_{i+1})\mod{2\pi}\in[0, \pi]$, hence, $\angccw(c_i, p - c_{i+1})\leq\pi$. Analogously, if an arc of $B_{j-1}$ is present in $\gamma_{ij}$, then for each its point $p$ there holds $\angccw(p - c_{j-1}, c_j)\leq\pi$.

Suppose that an arc of $B_{i+2}=B_{j-2}$ is present in $\gamma_{ij}$, and let $p$ be any of its points. The equality $\angccw(c_i, p) + \angccw(p, c_j) = \angccw(c_i, c_j)$ is equivalent to the fact that $p$ is located on the arc of $B_{i+2}$, going from $c_i/|c_i|$ counterclockwise until the point $c_j/|c_j|$. Since $p$ lies on the arc of $B_{i+2}$ from $c_{i+1}$ to $c_{j-1}$ by the definition, we know that $\angccw(c_{i+1}, p) + \angccw(p, c_{j-1}) = \angccw(c_{i+1}, c_{j-1})$. Due to Lemma~\ref{lemma:delaunay-covers-neighbors}), we know that $\angccw(c_{i+1}, p)\geq\pi/3$ and $\angccw(p, c_{j-1})\geq\pi/3$. Since $|c_i - c|\geq|c_i - c_{i+1}|=|c_{i+1} - c|$ and $|c_j - c|\geq|c_j - c_{j-1}|=|c_{j-1} - c|$, we know that the smaller angle between $c_i - c$ and $c_{i+1} - c$ does not exceed $\pi/3$, as well as the smaller angle between $c_j - c$ and $c_{j-1} - c$. This means that to obtain the arc of $B_{i+2}$ going from $c_i/|c_i|$ counterclockwise until the point $c_j/|c_j|$ from the arc of $B_{i+2}$ from $c_{i+1}$ to $c_{j-1}$, one can prolong or shorten it by at most $\pi/3$ in both directions, which leaves the point $p$ on it.
\end{proof}




This observation implies that in order to prove Claim~\ref{lemma:can-construct} it suffices to show the following assertion.

\begin{claim}\label{lemma:can-construct-without-alpha} We have
$$\Dj_{ij}\leq\begin{cases}\varphi_{ij}, & \textrm{if }k = 0, \\ 3\psi_{ij} - \frac{2\pi}{3} + 2\varphi_{ij}, & \textrm{if }k = 1. \end{cases}$$
\end{claim}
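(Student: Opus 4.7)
The plan is to establish Claim~\ref{lemma:can-construct-without-alpha} by case analysis on which disks of $\mathcal{D}_{ij}$ are involved in $\sigma_{ij}$. The central computational tool is the following observation: when $\gamma_{ij}$ transitions from the arc of $B_a$ to the arc of $B_b$ (both unit disks with centers at distance $d = |c_a - c_b| \in [1, 2]$), the corresponding direction jump equals $2\arcsin(d/2)$. In particular, a transition between tree-adjacent disks (whose centers are at distance exactly $1$) contributes a jump of $\pi/3$.

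For $k = 0$, write $P = \parent(D_i) = \parent(D_j)$; only $B_i$, $B_P$, $B_j$ can be involved. If $B_P$ is not involved, $\gamma_{ij}$ makes a single direction jump between $B_i$ and $B_j$ of size $2\arcsin(|c_i - c_j|/2)$. Since the isosceles triangle $c_i P c_j$ has $|Pc_i| = |Pc_j| = 1$ and apex angle $\varphi_{ij}$ at $P$, this yields $|c_i - c_j| = 2\sin(\varphi_{ij}/2)$ and hence $\Dj_{ij} = \varphi_{ij}$ exactly. If $B_P$ is also involved, both transitions are between tree-adjacent disks, so $\Dj_{ij} = 2 \cdot \pi/3 = 2\pi/3$. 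I would then argue $\varphi_{ij} \geq 2\pi/3$ as follows: the portion of $\partial B_P$ inside $R_{ij}$ is an arc from $c_i$ to $c_j$ of angular measure $\varphi_{ij}$ at $P$; a direct computation shows that $B_i$ (resp.\ $B_j$) covers a sub-arc of $\partial B_P$ of angular length $2\pi/3$ centered at $c_i$ (resp.\ $c_j$), so the whole arc is covered whenever $\varphi_{ij} \leq 2\pi/3$. Hence $B_P$ being involved forces $\varphi_{ij} > 2\pi/3 \geq \Dj_{ij}$, as required.

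For $k = 1$, the subsegment is $(D_i, P_i, D, P_j, D_j)$ with $B_i$ and $B_j$ always involved and each of $B_{P_i}$, $B$, $B_{P_j}$ potentially involved, giving up to eight subcases. In each subcase, $\Dj_{ij}$ decomposes as a sum of tree-adjacent jumps (each equal to $\pi/3$) plus skip jumps, where a skip jump has size $2\arcsin(d/2)$ for the relevant center-to-center distance $d$, which is expressible in terms of $\varphi_{ij}$ and $\psi_{ij}$ via isosceles triangles at $c$ and at the appropriate $1$-disk centers (for instance, $|P_i - P_j| = 2\sin(\psi_{ij}/2)$ from the triangle at the origin $c$). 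Involvement of each middle disk imposes a lower bound on the relevant angle through a covering argument analogous to the $k = 0$ analysis above. Combining these covering-based lower bounds with the direction jump formulas in each subcase yields the target inequality $\Dj_{ij} \leq 3\psi_{ij} - \frac{2\pi}{3} + 2\varphi_{ij}$.

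The main obstacle I anticipate is the bookkeeping in the mixed $k = 1$ subcases, particularly those in which exactly one or two of the middle disks are involved and the skip jumps couple $\varphi_{ij}$ and $\psi_{ij}$ through $\arcsin$ in a nonlinear fashion; the subcase where $B_{P_i}$ and $B_{P_j}$ are involved but $B$ is not looks most delicate, since then $\gamma_{ij}$ skips over the source and the governing distance $|P_i - P_j|$ must be traded against both $\varphi_{ij}$ and $\psi_{ij}$. By contrast, the extremes (all five disks involved, giving $\Dj_{ij} = 4\pi/3$ with all jumps of type $\pi/3$, and only $B_i, B_j$ involved, giving $\Dj_{ij} = 2\arcsin(|c_i - c_j|/2)$) should be comparatively straightforward, since the geometric conditions they impose already force $\varphi_{ij}$ and $\psi_{ij}$ to be large enough for the inequality to hold comfortably.
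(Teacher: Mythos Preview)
Your case-analysis framework and your treatment of $k=0$ match the paper's Section~6. The gap is in the $k=1$ analysis, where your difficulty assessment is inverted and hides a real obstruction.

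You assert that each skip jump has size $2\arcsin(d/2)$ with $d$ ``expressible in terms of $\varphi_{ij}$ and $\psi_{ij}$ via isosceles triangles.'' This is false for the skips in Cases~1a and~1b. Writing $c_i = c + u_i - v_i$ and $c_j = c + u_j - v_j$, one has $c_i - c_j = (u_i - u_j) - (v_i - v_j)$; the length of this vector depends on the \emph{relative phase} $\angle(u_i,v_i)$, not just on $\psi = \angle(u_i,u_j)$ and $\varphi = \angle(v_i,v_j)$. So in Case~1a (only $B_i,B_j$ involved) the single jump $\dj = 2\arcsin(|c_i-c_j|/2)$ is not a function of $(\psi,\varphi)$, and --- contrary to your claim --- no involvement condition gives you extra lower bounds on $\psi$ or $\varphi$ here, since nothing middle is involved. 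The paper resolves this with the triangle inequality $|c_i - c_j| \le |u_i - u_j| + |v_i - v_j| = 2\sin(\psi/2) + 2\sin(\varphi/2)$ and then a nontrivial concavity estimate in $s = \psi+\varphi$ on $[\pi/3,2\pi/3]$, using $\psi \ge \pi/3 \ge \varphi$ there. None of this is in your outline. Case~1b (exactly one of $B_{P_i},B_{P_j}$ involved, $B$ not) has the same obstruction for $|c_{i+1}-c_j|$; the paper handles it by building an auxiliary sparse-centered curve through the vertex $c'$ of the rhombus $c_{i+1}\,c\,c_{j-1}\,c'$ and invoking Lemma~\ref{lemma:master} on it.

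Conversely, the subcase you call ``most delicate'' (both $B_{P_i},B_{P_j}$ involved, $B$ not) is the easy one: the skip distance $|c_{P_i}-c_{P_j}| = |u_i - u_j| = 2\sin(\psi/2)$ \emph{is} determined by $\psi$, so $\Dj = 2\pi/3 + \psi$, and involvement of both $1$-disks gives $\psi+\varphi \ge 2\pi/3$ directly. Finally, whenever $B$ is involved you will need $\varphi \ge 0$ (the paper's Observation~\ref{lemma:involved-source}), which is a separate argument about how $B_i$ and $B_j$ shadow the arc of $\partial B$ and does not fall out of your generic covering template.
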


\section{Proof of Claim~\ref{lemma:can-construct-without-alpha}}

For simplicity, we use $\psi$, $\varphi$, and $\Dj$ instead of $\psi_{ij}$, $\varphi_{ij}$, and $\Dj_{ij}$ in this section.

\begin{observation}\label{lemma:not-covered-2pi-3}
If a disk $B_l$ is involved in $\sigma_{ij}$ for $i < l < j$, then $\angccw(c_{l-1} - c_l, c_{l+1} - c_l)\geq 2\pi/3$.
\end{observation}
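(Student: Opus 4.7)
The plan is to prove the contrapositive: assume $\alpha := \angccw(c_{l-1}-c_l,\,c_{l+1}-c_l) < 2\pi/3$ and conclude that $\partial B_l\cap\sigma_{ij}=\emptyset$, so $B_l$ is not involved in $\sigma_{ij}$.

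First I would pin down the local shape of $R_{ij}$ at the vertex $c_l$. Since $(c_i,\ldots,c_j)$ is the clockwise ordering of vertices along $\partial R_{ij}$, the interior of $R_{ij}$ lies to the right of the directed path $c_{l-1}\to c_l\to c_{l+1}$. A short check in local coordinates shows that this places $R_{ij}$ in exactly the angular sector at $c_l$ swept counterclockwise from the direction $c_{l-1}-c_l$ to the direction $c_{l+1}-c_l$, of angular measure precisely $\alpha$. Because $c_{l-1}$ and $c_{l+1}$ are adjacent to $c_l$ in the $\mathcal{P}$-tree, we have $|c_{l-1}-c_l|=|c_{l+1}-c_l|=1$, so both points lie on $\partial B_l$, and the part of $\partial B_l$ inside the local $R_{ij}$-wedge is the arc of angular length $\alpha$ running from $c_{l-1}$ counterclockwise to $c_{l+1}$.

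Next I would apply the law of cosines to see that the open disk $B_{l-1}$ meets $\partial B_l$ in the open arc of angular length $2\pi/3$ centered at direction $c_{l-1}-c_l$, and similarly $B_{l+1}$ covers the open arc of angular length $2\pi/3$ centered at direction $c_{l+1}-c_l$; the endpoints $c_{l-1}$ and $c_{l+1}$ themselves lie in the interiors of these covering disks, being their centers. Under the assumption $\alpha<2\pi/3$, these two open $2\pi/3$-arcs along $\partial B_l$ overlap, and together they cover the whole closed wedge arc from $c_{l-1}$ counterclockwise to $c_{l+1}$. Hence every point of $\partial B_l$ inside the $R_{ij}$-wedge is contained in the interior of $B_{l-1}\cup B_{l+1}$, so none of these points lies in $\partial S$.

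Finally I would confirm that no other point of $\partial B_l$ sits in $R_{ij}$. The complementary arc of $\partial B_l$ from $c_{l+1}$ counterclockwise back to $c_{l-1}$ lies entirely in the opposite angular sector at $c_l$, which is locally disjoint from $R_{ij}$. The two bounding segments $c_lc_{l-1}$ and $c_lc_{l+1}$ are radii of $B_l$, so $\partial B_l$ can cross them only at the endpoints $c_{l\pm1}$ themselves, and combining the packing hypothesis (distinct centers are at distance at least $1$) with Observation~\ref{lemma:only_inner} rules out any further piece of $\partial R_{ij}$ intruding into the unit neighborhood of $c_l$ to route the complementary arc back into $R_{ij}$. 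Putting the three steps together yields $\partial B_l\cap\sigma_{ij}=\emptyset$, contradicting that $B_l$ is involved. The hardest step will be this last global separation argument, ensuring that the opposite wedge at $c_l$ cannot rejoin $R_{ij}$ within distance one of $c_l$; the rest is a routine local angle count.
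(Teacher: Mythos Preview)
Your plan is the contrapositive of the paper's direct argument, so the core idea is the same. The paper's proof is three lines: pick $p\in\partial B_l\cap\sigma_{ij}$; since $p\notin B_{l\pm1}$ one gets $\angccw(c_{l-1}-c_l,\,p-c_l)\ge\pi/3$ and $\angccw(p-c_l,\,c_{l+1}-c_l)\ge\pi/3$; add to obtain $2\pi/3$. The paper does not pause over the additivity of these two directed angles---equivalently your step~3, that $p$ lies on the wedge arc rather than on the complementary one---treating it as clear from $p\in R_{ij}$.

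Your instinct that step~3 merits care is reasonable, but what you wrote for it is not yet a proof: merely invoking ``the packing hypothesis with Observation~\ref{lemma:only_inner}'' does not by itself exclude a non-adjacent side of $\partial R_{ij}$ coming within distance $1$ of $c_l$. A clean fix (which only needs the weaker statement that no point of $\sigma_{ij}\cap\partial B_l$ lies on the complementary arc) is the crossed-diagonals trick from the proof of Observation~\ref{lemma:only_inner}: if the radius $[c_l,p]$ crossed some other boundary segment $[c_a,c_b]$ of $R_{ij}$, then $|c_l-c_a|+|p-c_b|<|c_l-p|+|c_a-c_b|=2$, forcing $|p-c_b|<1$ and contradicting $p\in\partial S$; for the two bounding rays one similarly gets $|c_l-f_i|<1$ (respectively $|c_l-f_j|<1$), contradicting Observation~\ref{lemma:far}. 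Hence the open radius $(c_l,p]$ stays in $R_{ij}$, which forces the direction $p-c_l$ into the wedge and makes both your argument and the paper's ``hence'' go through.
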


\begin{proof}
Consider any point $p$ of $\partial{B_l}\cap\sigma_{ij}$. As $p$ is not inside $B_{l+1}$, we have $|p - c_{l+1}|\geq 1$, or $\angccw(p - c_l, c_{l+1} - c_l)\geq\pi/3$. Similarly, $\angccw(c_{l-1} - c_l, p - c_l)\geq\pi/3$. Hence, $\angccw(c_{l-1} - c_l, c_{l+1} - c_l)\geq2\pi/3$.
\end{proof}

\subsection{Case $k = 0$}

There are two $2$-disks $D_i$ and $D_j$ and one $1$-disk $D_{i+1} = D_{j-1}$ occurring in $\mathcal{D}_{ij}$.

\begin{enumerate}[label={\bf Case \arabic*: }, wide, labelwidth=!, labelindent=0pt]
\caseitem{0a}{$B_{i+1} = B_{j-1}$ is not involved in $\sigma_{ij}$}

In this case there is the only direction jump of size $\varphi$. 

\caseitem{0b}{$B_{i+1} = B_{j-1}$ is involved in $\sigma_{ij}$}

In this case there are two direction jumps, each of them equal to $\pi/3$. Due to Observation~\ref{lemma:not-covered-2pi-3}, $\varphi\geq 2\pi/3$, which finishes the proof.


\subsection{Case $k = 1$, general observations}

There are two $2$-disks $D_i$ and $D_j$, two $1$-disks $D_{i+1}$ and $D_{j-1}$, and one $0$-disk $D$ occurring in $\mathcal{D}_{ij}$.

\begin{observation}\label{lemma:direction-jumps-are-bounded}
The direction jump between disks $B_{i_1}$ and $B_{i_2}$ does not exceed $(i_2 - i_1)\frac{\pi}{3}$.
\end{observation}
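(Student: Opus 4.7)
The plan is to identify the direction jump as the interior angle at the transition point and then dispatch the cases based on the value of $i_2-i_1$. Let $p\in\partial B_{i_1}\cap\partial B_{i_2}$ be the transition point between the arcs of $B_{i_1}$ and $B_{i_2}$ in $\gamma_{ij}$. Since $|p-c_{i_1}|=|p-c_{i_2}|=1$, and the CCW construction of $\gamma_{ij}$ forces $p$ to be the ``outer'' intersection of the two circles, the direction jump $\angccw(p-c_{i_2},p-c_{i_1})$ equals the interior angle at $p$ of the isosceles triangle $c_{i_1}pc_{i_2}$, which by the law of cosines is $\theta=2\arcsin(d/2)$ for $d:=|c_{i_1}-c_{i_2}|$. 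In particular $\theta\leq\pi$.

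First I would dispatch the easy cases. For $i_2-i_1=1$ the two centers are tree-adjacent, so $d=1$ and $\theta=\pi/3$, matching the bound. For $i_2-i_1\geq 3$ we have $(i_2-i_1)\pi/3\geq\pi\geq\theta$, so the bound is automatic. The nontrivial case is $i_2-i_1=2$, where one must show $d\leq\sqrt{3}$, or equivalently (since $|c_{i_1}-c_{i_1+1}|=|c_{i_1+1}-c_{i_2}|=1$) that the angle $\alpha:=\angle c_{i_1}c_{i_1+1}c_{i_2}$ at the intermediate tree-vertex is at most $2\pi/3$.

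My approach to the case $i_2-i_1=2$ is contrapositive. Assume $\alpha>2\pi/3$. Each of $B_{i_1}$ and $B_{i_2}$ covers an arc on $\partial B_{i_1+1}$ of angular extent exactly $2\pi/3$, centered on the corresponding tree-edge direction from $c_{i_1+1}$; when $\alpha>2\pi/3$ these two arcs are disjoint on the $R_{ij}$-side of $c_{i_1+1}$, leaving an uncovered gap of angular extent $\alpha-2\pi/3>0$ in $\partial B_{i_1+1}\cap R_{ij}$. By bookkeeping the remaining disks of $\mathcal{D}_{ij}$ (each of which covers an arc on $\partial B_{i_1+1}$ of angular extent at most $2\pi/3$, by the packing constraint $|c_k-c_{i_1+1}|\geq 1$) and using the planar embedding of the tree to control which side of $c_{i_1+1}$ each such disk lies on, I would show that this gap cannot be fully covered. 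That forces $B_{i_1+1}$ to be involved in $\sigma_{ij}$, contradicting the assumption that $B_{i_1}$ and $B_{i_2}$ are consecutive arcs of $\gamma_{ij}$.

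The main obstacle is this ``uncoverability'' check in the case $i_2-i_1=2$. The three pairs at tree-distance two in $\mathcal{D}_{ij}$ are $(B_i,B)$, $(B_{i+1},B_{j-1})$, and $(B,B_j)$, and each has a different set of remaining disks and a different local planar layout; the argument presumably needs a careful pair-by-pair analysis, possibly exploiting the symmetry between the $B_i$- and $B_j$-sides of $\mathcal{D}_{ij}$ to reduce $(B_i,B)$ and $(B,B_j)$ to a single case.
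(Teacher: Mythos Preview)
Your approach is substantially more complicated than the paper's, and the case $i_2-i_1=2$ is left incomplete. The paper's proof is three lines and rests on an observation you do not exploit: since the intermediate disks $B_l$ (for $i_1<l<i_2$) are skipped by $\gamma_{ij}$, the transition point $p$ lies outside each of them, so $|p-c_l|\geq 1$. Combined with $|p-c_{i_1}|=|p-c_{i_2}|=1$ and $|c_l-c_{l+1}|=1$, this means that in every triangle $c_lc_{l+1}p$ with $i_1\leq l<i_2$ the side opposite $p$ is a shortest side, so the angle at $p$ is at most $\pi/3$. Summing these $i_2-i_1$ angles bounds the direction jump directly, with no case split on $i_2-i_1$ and no need to control which other disks can reach $\partial B_{i_1+1}$.

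Your plan for $i_2-i_1=2$ amounts to proving the \emph{converse} of Observation~\ref{lemma:not-covered-2pi-3} (that ``not involved'' forces the angle at $c_{i_1+1}$ to be at most $2\pi/3$). As you recognise, this converse is not automatic: one must rule out that the remaining disks of $\mathcal{D}_{ij}$ cover the gap on $\partial B_{i_1+1}$, and you leave that pair-by-pair analysis undone. It may well be salvageable, but the paper's argument bypasses the whole issue by working at the point $p$ rather than at the intermediate centre.
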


\begin{proof}
Let $p$ be the point of the direction jump from $B_{i_1}$ to $B_{i_2}$. Then, as for every $l$ such that $i_1 < l < i_2$ we have $|p - c_l|\geq 1$, the vertex $p$ is always at the smallest angle of any triangle $c_lc_{l+1}p$ for $i_1\leq l < i_2$. Since the angle about $p$ is at most $\pi/3$ in every such triangle, summing up all these inequalities gives us the required bound.
\end{proof}

\begin{observation}
If $\psi\geq\pi$, then Claim~\ref{lemma:can-construct-without-alpha} holds.
\end{observation}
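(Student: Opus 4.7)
The plan is to combine a universal upper bound on the total direction jump $\Dj_{ij}$ with the fact that the hypothesis $\psi \geq \pi$ already makes the right-hand side of Claim~\ref{lemma:can-construct-without-alpha} automatically large, reducing everything to a lower bound on $\varphi$.

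First, since $k = 1$, the subsegment $\mathcal{D}_{ij}$ has exactly five disks, so $j - i = 4$. Letting $i = k_1 < k_2 < \cdots < k_r = j$ be the indices of all disks involved in $\sigma_{ij}$, Observation~\ref{lemma:direction-jumps-are-bounded} bounds each direction jump between $B_{k_s}$ and $B_{k_{s+1}}$ by $(k_{s+1} - k_s)\cdot \pi/3$, and summing telescopes to the universal estimate $\Dj_{ij} \leq (j-i)\cdot\pi/3 = 4\pi/3$. On the other hand, the hypothesis $\psi \geq \pi$ gives $3\psi - 2\pi/3 \geq 3\pi - 2\pi/3 = 7\pi/3$. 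Therefore it suffices to show $\varphi \geq -\pi/2$, since then
$$3\psi - \frac{2\pi}{3} + 2\varphi \geq \frac{7\pi}{3} - \pi = \frac{4\pi}{3} \geq \Dj_{ij},$$
which is precisely the inequality of Claim~\ref{lemma:can-construct-without-alpha} in the case $k = 1$.

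The only nontrivial step, and therefore the main obstacle, is establishing $\varphi \geq -\pi/2$. Writing $\varphi = \angle(v_i, u_i) + \psi + \angle(u_j, v_j)$ via rule~\ref{rule:vv}, the constraints $|c_i - c_{i+1}| = |c_j - c_{j-1}| = 1$ combined with $|c_i|, |c_j| \geq 1$ force $|\angle(u_i, v_i)|, |\angle(u_j, v_j)| \in [\pi/3, \pi]$, but these alone do not preclude both $\angle(v_i, u_i)$ and $\angle(u_j, v_j)$ from being close to $-\pi$ at the same time. The hard part is to rule out this worst case by exploiting the planar CCW DFS orientation and the hypothesis $\psi \geq \pi$: when $\psi \geq \pi$ the two $1$-disks $c_{i+1}$ and $c_{j-1}$ lie on roughly opposite sides of the source, which together with the planarity of the tree constrains on which side of the line through the origin and each $1$-disk the corresponding $2$-disk can sit, and this should force at least one of $\angle(v_i, u_i), \angle(u_j, v_j)$ to be bounded away from $-\pi$ by an amount sufficient to conclude.
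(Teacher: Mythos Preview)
Your strategy is exactly the paper's: telescope Observation~\ref{lemma:direction-jumps-are-bounded} over the involved disks to get $\Dj \leq (j-i)\,\pi/3 = 4\pi/3$, and then show the right-hand side exceeds $4\pi/3$ via a lower bound on $\varphi$. The genuine gap is that you never establish that lower bound: your ``hard part'' paragraph is only a hand-wave about planarity and DFS orientation, with no inequality actually proved.

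The paper closes this gap in one line, with no planarity argument whatsoever. It asserts $\angle(u_i, v_i),\,\angle(u_j, v_j) \in [-2\pi/3,\, 2\pi/3]$, because otherwise the angle at $c_{i+1}$ in the isosceles triangle $c\,c_{i+1}\,c_i$ (legs $|c-c_{i+1}|=|c_i-c_{i+1}|=1$) would be below $\pi/3$, forcing $|c_i - c| < 1$. Then
\[
\varphi = \angle(v_i, u_i) + \psi + \angle(u_j, v_j) \geq -\tfrac{2\pi}{3} + \pi - \tfrac{2\pi}{3} = -\tfrac{\pi}{3},
\]
so $3\psi - \tfrac{2\pi}{3} + 2\varphi \geq \tfrac{5\pi}{3} > \tfrac{4\pi}{3} \geq \Dj$. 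Notice that your claimed range $|\angle(u_i, v_i)| \in [\pi/3, \pi]$ is the exact \emph{complement} of the paper's $|\angle(u_i, v_i)| \leq 2\pi/3$; the discrepancy is the orientation of $v_i$. The paper's argument here --- as well as Observation~\ref{lemma:psi-plus-phi} and the statement in Case~0a that the single direction jump equals $\varphi$ --- is only consistent with the reading $v_i = c_i - c_{i+1}$, opposite to the displayed formula $v_i = c_{i+1} - c_i$. Under that reading the bound $|\angle(u_i, v_i)| \leq 2\pi/3$ is immediate and your ``hard part'' evaporates. Under the literal definition you used, the target $\varphi \geq -\pi/2$ is in fact false (place $c_{i+1}$ and $c_{j-1}$ nearly antipodal on $\partial B$ and $c_i$, $c_j$ almost collinear with the origin through their parents; then $\varphi \to -\pi$), so no amount of planarity reasoning would have rescued the argument.
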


\begin{proof}
Note that $\angle(u_i, v_i)\in\left[-\frac{2\pi}{3}, \frac{2\pi}{3}\right]$. Indeed, otherwise the angle between $c_{i+1} - c_i$ and $c_{i+1} - c$ is less than $\pi/3$, which implies that $|c_i - c| < 1$. Similarly, $\angle(u_j, v_j)\in\left[-\frac{2\pi}{3}, \frac{2\pi}{3}\right]$.

But then
$$\varphi \stackrel{\ref{rule:vv}}{=} \angle(v_i, u_i) + \angle(u_i, u_j) + \angle(u_j, v_j) \geq -\frac{2\pi}{3} + \psi - \frac{2\pi}{3} \geq -\frac{\pi}{3}.$$

Thus,
$3\psi - \frac{2\pi}{3} + 2\varphi\geq \frac{5\pi}{3}.$

On the other hand, $\Dj\leq\frac{4\pi}{3}$. Indeed, if $i = i_1$, $i_2$, \ldots, $i_m = j$ are all involved in $\sigma_{ij}$ disks, then, by Observation~\ref{lemma:direction-jumps-are-bounded}, we have $$\Dj\leq(i_2 - i_1)\frac{\pi}{3} + \ldots + (i_m - i_{m-1})\frac{\pi}{3} = \frac{4\pi}{3},$$
which concludes the proof.
\end{proof}

From now on, we assume $\psi < \pi$.

\begin{observation} \label{lemma:inside-the-rhombus}
If $c'$ is the point such that $c_{i+1}cc_{j-1}c'$ is a parallelogram, then $c'\in R_{ij}$. Let $t$ be any point such that $|t - c_{i+1}|\geq 1$ and $|t - c_{j-1}| \geq 1$. Then $t$ cannot be inside the rhombus $c_{i+1}cc_{j-1}c'$.
\end{observation}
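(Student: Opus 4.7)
The statement decomposes into two independent geometric claims; I would prove them separately, starting with the second, since it is the cleaner computation.

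For the second claim, I would show that the closed unit disks around $c_{i+1}$ and $c_{j-1}$ together cover the rhombus and that every point strictly inside the rhombus lies in the \emph{open} interior of at least one of them. Both $c$ and $c'$ lie on each of the two unit circles, since the parallelogram identity gives $c' - c_{i+1} = c_{j-1} - c$ and hence $|c' - c_{i+1}| = 1$, and symmetrically $|c' - c_{j-1}| = 1$. The diagonal $cc'$ therefore splits the rhombus into triangles $T_1 = cc_{i+1}c'$ and $T_2 = cc_{j-1}c'$. In $T_1$ the three vertices are at distances $1$, $0$, $1$ from $c_{i+1}$; writing a point as $t = \alpha c + \beta c_{i+1} + \gamma c'$ in barycentric coordinates and using that the two unit vectors $c - c_{i+1}$ and $c' - c_{i+1}$ make an angle of $\pi - \psi$ (hence inner product $-\cos\psi$), a short expansion yields
\begin{equation*}
|t - c_{i+1}|^2 = (\alpha + \gamma)^2 - 2\alpha\gamma\,(1 + \cos\psi) \leq 1,
\end{equation*}
with equality only at $t = c$ or $t = c'$, because $1 + \cos\psi > 0$ when $\psi < \pi$. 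The symmetric argument on $T_2$ gives the analogous bound for $c_{j-1}$, so any $t$ strictly inside the rhombus satisfies $|t - c_{i+1}| < 1$ or $|t - c_{j-1}| < 1$, contradicting the hypothesis.

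For the first claim, I would set coordinates with $c$ at the origin and $u_i$, $u_j$ placed symmetrically about the positive $x$-axis at angles $\mp\psi/2$, so that $c' = u_i + u_j$ sits on the positive $x$-axis at distance $2\cos(\psi/2) \leq \sqrt{3}$ from $c$. The proof then proceeds boundary piece by boundary piece. Near $c$, the open angular sector of size $\psi$ from $u_i$ to $u_j$ contains no other $1$-disk centers (because $c_{i+1}$ and $c_{j-1}$ are consecutive children of the source in the DFS ordering), and $c'$ is clearly inside it, which handles the two segments $cc_{i+1}$ and $cc_{j-1}$. Between the outward radial rays $[c_if_i)$ and $[c_jf_j)$ the angular position of $c'$ (at angle $0$, strictly between the angular positions of $c_i$ and $c_j$) places it on the correct side. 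For the remaining two boundary segments $c_ic_{i+1}$ and $c_{j-1}c_j$, I would invoke the angular bound established in the remark at the end of the proof of Observation~\ref{lemma:far}: at any $2$-disk $c_i$, every other disk center lies within angle $2\pi/3$ of the inward radial direction $c - c_i$. Applied to the neighbor $c_{i+1}$, this pins $c_i - c_{i+1}$ into a cone opposite to the arc of $\partial B(c_{i+1}, 1)$ on which $c'$ sits (recall $c'$ lies on this unit circle in direction $u_j$ from $c_{i+1}$), so $c'$ is on the $R_{ij}$-side of the segment $c_ic_{i+1}$; the symmetric argument at $c_j$ handles $c_{j-1}c_j$.

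\textbf{Main obstacle.} The second claim is a short barycentric computation. The first claim is the harder one: it requires combining the tree-combinatorial fact that $c_{i+1}$ and $c_{j-1}$ are consecutive children of the source with the metric $2\pi/3$-cone bound at $c_i$ and $c_j$, and then verifying that $c'$ is on the correct side of all five boundary pieces of $R_{ij}$. I expect the cleanest execution to be via explicit signed cross products in the symmetric coordinate system above, rather than a geometric case split.
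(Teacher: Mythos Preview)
Your treatment of the second claim is correct and is essentially the paper's argument with the computation spelled out: the paper also splits the rhombus along the diagonal $cc'$ and simply observes that a triangle with two unit sides emanating from $c_{i+1}$ lies inside the open unit disk about $c_{i+1}$ except at the two far vertices. Your barycentric identity is a clean way to make that explicit.

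For the first claim your route is genuinely different and heavier than the paper's. The paper argues by segment-crossing: since $c\in R_{ij}$, if $c'\notin R_{ij}$ then $[c,c']$ must exit through some boundary piece; the rays lie on lines through $c$ and the segments $cc_{i+1}$, $cc_{j-1}$ share the endpoint $c$, so the only candidates are $c_ic_{i+1}$ or $c_jc_{j-1}$. If $[c_i,c_{i+1}]$ meets $[c,c']$ then $c_i$ lies on the $c_{j-1}$-side of the line through $c,c'$; but in a rhombus the diagonals are perpendicular bisectors of one another, so this line is the perpendicular bisector of $c_{i+1}c_{j-1}$, giving $|c_i-c_{j-1}|<|c_i-c_{i+1}|=1$, a contradiction. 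No coordinates, no cones, no reference to Observation~\ref{lemma:far}. Your plan can be made to work, but note two things. First, $R_{ij}$ need not be convex, so ``$c'$ is on the correct side of each boundary piece'' does not by itself certify $c'\in R_{ij}$; you would end up having to track the segment $[c,c']$ anyway, which is exactly the paper's framing. Second, the $2\pi/3$ cone at $c_i$ is much weaker than what the configuration already gives (the triangle $cc_ic_{i+1}$ is isoceles with the apex at $c_{i+1}$, so the angle at $c_i$ is at most $\pi/3$), and converting either bound into the sidedness statement is more bookkeeping than the one-line perpendicular-bisector observation.
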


\begin{proof}
The triangle $c_{i+1}cc'$ lies in $B_{i+1}$, and the triangle $c_{j-1}cc'$ lies in $B_{j-1}$. If $c'\notin R_{ij}$, then there is a unit segment or a ray that is a side of $R_{ij}$ and that separates $c'$ from $c$ in this region. 
It is obvious that it cannot be any of the rays, as both of them belong to the line containing $c$. Similarly, it can be neither $cc_{i+1}$ nor $cc_{j-1}$. Hence, the only remaining options are segments $c_ic_{i+1}$ and $c_jc_{j-1}$. Suppose that, say, the segment $c_ic_{i+1}$ intersects the segment $cc'$.
This means that the point $|c_{j-1} - c_i| < 1$, because in the triangle $c_ic_{i+1}c_{j-1}$ the angle about $c_{j-1}$ is greater than the angle about $c_{i+1}$, and $|c_{i+1} - c_i| = 1$.
Similarly, $c_jc_{j-1}$ cannot separate $c$ from $c'$. Thus, $c'\in R_{ij}$.

Suppose that $t$ belongs to one of the triangles $cc_{i+1}c'$ and $cc_{j-1}c'$, say, the first one. But since $|c_{i+1} - c| = |c_{i+1} - c'| = 1$, any other point of between $c$ and $c'$ is strictly less than $1$ away from $c_{i+1}$; therefore, so is point $t$. See Figure~\ref{fig:phi-and-psi} for clarity.
\end{proof}

\begin{observation} \label{lemma:psi-plus-phi} We have
\begin{align*}
\psi + \varphi & = \angle(u_i, v_j) + \angle(v_i, u_j) \\
& = \angccw(c_i - c_{i+1}, c' - c_{i+1}) + \angccw(c' - c_{j-1}, c_j - c_{j-1}).    
\end{align*}

\end{observation}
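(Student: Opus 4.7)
The plan is to establish the two asserted equalities in sequence, the first algebraically and the second by translating between $\angle$ and $\angccw$ via the parallelogram structure. For the first equality, I would expand $\varphi = \angle(v_i, v_j)$ using rule~\ref{rule:vv}, getting $\varphi = \angle(v_i, u_i) + \psi + \angle(u_j, v_j)$. Adding $\psi$ and regrouping yields
\[
\psi + \varphi = [\angle(v_i, u_i) + \angle(u_i, u_j)] + [\angle(u_i, u_j) + \angle(u_j, v_j)],
\]
and rule~\ref{rule:uivj} (along with its analog obtained by interchanging $i$ and $j$, which identifies $\angle(u_i, v_j) = \angle(u_i, u_j) + \angle(u_j, v_j)$) rewrites these brackets as $\angle(v_i, u_j)$ and $\angle(u_i, v_j)$ respectively.

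For the second equality, I would use the parallelogram relation $c' = c_{i+1} + c_{j-1}$ (which follows from $c = 0$), giving $c' - c_{i+1} = c_{j-1} = u_j$ and $c' - c_{j-1} = c_{i+1} = u_i$. Combined with $c_i - c_{i+1} = -v_i$ and $c_j - c_{j-1} = -v_j$, the right-hand side rewrites as $\angccw(-v_i, u_j) + \angccw(u_i, -v_j)$. Using the fact that $\angccw(\cdot,\cdot)$ and $\angle(\cdot,\cdot)$ agree modulo $2\pi$, together with $\angccw(-a, b) \equiv \angccw(a, b) + \pi \pmod{2\pi}$, I would deduce that this sum is congruent modulo $2\pi$ to $\angle(v_i, u_j) + \angle(u_i, v_j)$.

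The main obstacle is lifting this mod-$2\pi$ congruence to a strict equality of real numbers. The plan is to invoke the standing assumption $\psi < \pi$, the bounds $\angle(u_i, v_i), \angle(u_j, v_j) \in [-2\pi/3, 2\pi/3]$ coming from $|c_i - c|, |c_j - c| \geq 1$, and the containment $c' \in R_{ij}$ provided by Observation~\ref{lemma:inside-the-rhombus}. Together these pin down each summand $\angccw(c_i - c_{i+1}, c' - c_{i+1})$ and $\angccw(c' - c_{j-1}, c_j - c_{j-1})$ to a specific sub-interval of $[0, 2\pi)$, and they likewise localize $\psi + \varphi$ in an interval of length less than $2\pi$, so that the congruence forces literal equality. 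The algebraic manipulations are immediate; this range-tracking is the one step that requires care.
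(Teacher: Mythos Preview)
Your plan is essentially the paper's proof. The first equality is literally the same chain of rule applications. For the second equality the paper also reduces to range--tracking via Observation~\ref{lemma:inside-the-rhombus}, but it invokes the \emph{second} assertion there (that $c_i$ and $c_j$, being at distance $\geq 1$ from both $c_{i+1}$ and $c_{j-1}$, cannot lie in the rhombus $c_{i+1}cc_{j-1}c'$) rather than the first assertion $c'\in R_{ij}$ that you cite; from this it reads off directly that $\angle(v_i,u_j)\geq 0$ and $\angle(u_i,v_j)\geq 0$, which pins the ranges without needing the auxiliary bounds on $\psi$ and $\angle(u_i,v_i)$ you list.
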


\begin{proof}
Note that neither $c_i$ nor $c_j$ is inside the rhombus $c_{i+1}cc_{j-1}c'$, according to Observation~\ref{lemma:inside-the-rhombus}. Thus, $\angle(v_i, u_j)\geq 0$ and $\angle(u_i, v_j)\geq 0$. Hence,
\begin{align*}
\psi + \varphi & = \angle(v_i, v_j) + \angle(u_i, u_j) \\
&\stackrel{\ref{rule:vv}}{=} \big(\angle(v_i, u_i) + \angle(u_i, u_j) + \angle(u_j, v_j)\big) + \angle(u_i, u_j) \\
&= \big(\angle(v_i, u_i) + \angle(u_i, u_j)\big) + \big(\angle(u_i, u_j) + \angle(u_j, v_j)\big) \\
&\stackrel{\ref{rule:uivj}}{=} \angle(v_i, u_j) + \angle(u_i, v_j) \\
&= \angccw(c_i - c_{i+1}, c' - c_{i+1}) + \angccw(c' - c_{j-1}, c_j - c_{j-1}),
\end{align*}
which finishes the proof.
\end{proof}

\begin{figure}[h!]
    \centering
    \begin{subfigure}[t]{.4\textwidth}
    \includegraphics[width=\textwidth]{pics/phi-and-psi.mps}
    \caption{Curve from Obsevation~\ref{lemma:obvious-stuff-about-psi}}
    \label{fig:phi-and-psi}
    \end{subfigure}
    \begin{subfigure}[t]{.4\textwidth}
    \includegraphics[width=\textwidth]{pics/only-middle-involved.mps}
    \caption{Curve from Obsevation~\ref{lemma:involved-source}}
    \label{fig:only-middle-involved}
    \end{subfigure}
    \caption{}
\end{figure}

\begin{observation} \label{lemma:obvious-stuff-about-psi}
$\psi + \varphi\geq \pi/3$ and $\psi\geq \pi/3$.
\end{observation}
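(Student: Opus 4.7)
The plan is to derive both inequalities from packing distance constraints.

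For $\psi \geq \pi/3$, the key facts are $|c - c_{i+1}| = |c - c_{j-1}| = 1$ (parent--child edges) and $|c_{i+1} - c_{j-1}| \geq 1$ (distinct disks in the packing). The law of cosines in the isosceles triangle $cc_{i+1}c_{j-1}$ forces the undirected angle at $c$ to be at least $\pi/3$, and the directed counterclockwise angle $\psi \in [0, 2\pi)$ equals either this undirected angle or $2\pi$ minus it, so in either case $\psi \geq \pi/3$.

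For $\psi + \varphi \geq \pi/3$, the plan is an argument by contradiction exploiting Observation~\ref{lemma:psi-plus-phi}. Writing $\alpha = \angccw(c_i - c_{i+1}, c' - c_{i+1}) \geq 0$ and $\beta = \angccw(c' - c_{j-1}, c_j - c_{j-1}) \geq 0$, the previous observation gives $\psi + \varphi = \alpha + \beta$. Assume for contradiction $\alpha + \beta < \pi/3$; I will derive $|c_i - c_j| < 1$, contradicting the packing. Placing coordinates with $c = 0$, $c_{i+1} = 1$, and $c_{j-1} = e^{i\psi}$ gives $c' = 1 + e^{i\psi}$, and the angular definitions of $\alpha$ and $\beta$ translate into $c_i = 1 + e^{i(\psi - \alpha)}$ and $c_j = e^{i\psi} + e^{i\beta}$. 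Factoring as $c_i - c_j = -2i\bigl[\sin(\beta/2)e^{i\beta/2} + \sin(\alpha/2)e^{i(\psi - \alpha/2)}\bigr]$ yields
$$|c_i - c_j|^2 = 4\sin^2(\alpha/2) + 4\sin^2(\beta/2) + 8\sin(\alpha/2)\sin(\beta/2)\cos\bigl(\psi - (\alpha + \beta)/2\bigr).$$

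Set $s = \alpha/2$ and $t = \beta/2$, so $s + t < \pi/6$. Combining $\psi \geq \pi/3$ with the standing assumption $\psi < \pi$ places $\psi - s - t$ in the open interval $(\pi/6, \pi)$, so $\cos(\psi - s - t) < \sqrt{3}/2$. Consequently $|c_i - c_j|^2 \leq 4\sin^2 s + 4\sin^2 t + 4\sqrt{3}\sin s\sin t$. The identity $\sin^2(s+t) = \sin^2 s + \sin^2 t + 2\cos(s+t)\sin s\sin t$, together with $\cos(s+t) \geq \sqrt{3}/2$ (since $s + t \leq \pi/6$), rearranges to $4\sin^2(s+t) \geq 4\sin^2 s + 4\sin^2 t + 4\sqrt{3}\sin s\sin t$. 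Therefore $|c_i - c_j|^2 \leq 4\sin^2(s+t) < 4\sin^2(\pi/6) = 1$, the desired contradiction. The main technical obstacle is establishing the closed-form expression for $|c_i - c_j|^2$; once that is in hand, the argument closes via the pleasant coincidence that the same threshold $\sqrt{3}/2$ bounds $\cos(\psi - s - t)$ from above (through $\psi \geq \pi/3$) and $\cos(s+t)$ from below (through $s + t \leq \pi/6$), with equality precisely in the critical configuration $\psi = \pi/3$, $\alpha = \beta = \pi/6$.
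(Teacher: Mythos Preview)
Your proof is correct. The treatment of $\psi\geq\pi/3$ is the same as the paper's. For $\psi+\varphi\geq\pi/3$, however, you take a genuinely different route. The paper simply observes that the two-arc path going counterclockwise along $\partial B_{i+1}$ from $c_i$ to $c'$ and then along $\partial B_{j-1}$ from $c'$ to $c_j$ is a sparse-centered curve with endpoints $c_i$, $c_j$ at distance at least $1$; its length is $\psi+\varphi$ by Observation~\ref{lemma:psi-plus-phi}, so Lemma~\ref{lemma:master} gives $\psi+\varphi\geq\pi/3$ in one line. You instead parametrize explicitly, derive the closed form for $|c_i-c_j|^2$, and close with a neat trigonometric estimate exploiting that $\sqrt{3}/2$ simultaneously bounds $\cos(\psi-s-t)$ from above and $\cos(s+t)$ from below. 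This is essentially a hands-on verification of the two-arc special case of Lemma~\ref{lemma:master}; it is self-contained and illuminates the extremal configuration, but it does more work than necessary given that Lemma~\ref{lemma:master} is already available and is one of the paper's two pillars.
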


\begin{proof}
Since $|c_{i+1} - c_{j-1}| \geq 1$, we have $\psi\geq\pi/3$. Consider the sparse-centered curve that starts at $c_i$, follows the perimeter of $B_{i+1}$ counterclockwise until it reaches $c'$, then switches to the perimeter of $B_{j-1}$ until it reaches $c_j$; see Figure~\ref{fig:phi-and-psi}. Since it satisfies the assumption of Lemma \ref{lemma:master}, its length is at least $\pi/3$. On the other hand, according to Observation~\ref{lemma:psi-plus-phi}, its length is exactly $\psi + \varphi$.
\end{proof}

\begin{observation}
If $B$ is involved in $\sigma_{ij}$, then $\varphi\geq 0$. \label{lemma:involved-source}
\end{observation}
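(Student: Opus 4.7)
The plan is to derive $\varphi\geq 0$ by applying Lemma~\ref{lemma:master} to two sparse-centered curves that share an endpoint at an uncovered point of $\partial B$.

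The hypothesis provides a point $p\in\partial B\cap R_{ij}$ with $|p-c_k|\geq 1$ for all $k\in\{i,i+1,j-1,j\}$. Since $|p-c|=|c_{i+1}-c|=|c_{j-1}-c|=1$, the inequalities $|p-c_{i+1}|\geq 1$ and $|p-c_{j-1}|\geq 1$ force the angular position $\theta = \angccw(c_{i+1}-c,\,p-c)$ of $p$ on $\partial B$ to lie in $[\pi/3,\psi-\pi/3]$; in particular $\psi\geq 2\pi/3$. Write $p_1$ for the point of $\partial B_{i+1}\cap\partial B$ lying in $R_{ij}$ (with $p_1-c$ at angle $\pi/3$ from $u_i$) and $p_2$ for the analogous point of $\partial B\cap\partial B_{j-1}$ in $R_{ij}$ (at angle $\psi-\pi/3$ from $u_i$).

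I then define two sparse-centered curves:
\begin{itemize}
\item $\gamma_1$ traces $\partial B_{i+1}$ counterclockwise from $c_i$ to $p_1$, then $\partial B$ counterclockwise from $p_1$ to $p$;
\item $\gamma_2$ traces $\partial B$ counterclockwise from $p$ to $p_2$, then $\partial B_{j-1}$ counterclockwise from $p_2$ to $c_j$.
\end{itemize}
Both have their two centers at mutual distance exactly $1$, so they are valid sparse-centered curves; and the range $\theta\in[\pi/3,\psi-\pi/3]$ guarantees that the two arcs around $c$ sweep counterclockwise through less than a full turn. The endpoints of each curve are at distance at least $1$ by the choice of $p$, so Lemma~\ref{lemma:master} gives $|\gamma_1|+|\gamma_2|\geq 2\pi/3$.

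The key identity I still need is $|\gamma_1|+|\gamma_2|=\varphi+2\pi/3$. The two arcs around $c$ concatenate into a single arc from $p_1$ to $p_2$ of length $(\psi-\pi/3)-\pi/3=\psi-2\pi/3$, hence
\[
|\gamma_1|+|\gamma_2| = \angccw(c_i-c_{i+1},\,p_1-c_{i+1}) + (\psi-2\pi/3) + \angccw(p_2-c_{j-1},\,c_j-c_{j-1}).
\]
Since $c,c_{i+1},p_1$ form an equilateral triangle and $c'-c_{i+1}=c_{j-1}-c$, one checks that $\angccw(p_1-c_{i+1},\,c'-c_{i+1})=\psi-2\pi/3$; symmetrically, $\angccw(c'-c_{j-1},\,p_2-c_{j-1})=\psi-2\pi/3$. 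Using these to replace $p_1,p_2$ by $c'$ and then invoking Observation~\ref{lemma:psi-plus-phi}, the right-hand side becomes $(\psi+\varphi)-(\psi-2\pi/3)=\varphi+2\pi/3$. Combining with the bound from Lemma~\ref{lemma:master} gives $\varphi\geq 0$.

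The main subtlety I expect is verifying that the two $\angccw$-substitutions used to replace $p_1,p_2$ by $c'$ incur no wrap-around modulo $2\pi$; this follows from $\psi-2\pi/3\in[0,\pi/3)$ (using the standing assumption $\psi<\pi$) together with the constraints on the positions of $c_i$ and $c_j$ coming from the packing condition on $\mathcal{P}$.
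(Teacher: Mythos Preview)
Your argument is correct and takes a genuinely different route from the paper's. The paper argues directly: on the arc of $\partial B$ from $c_{i+1}$ to $c_{j-1}$ (of length $\psi$), the disk $B_i$ covers an initial sub-arc of length $\max(\angle(u_i,v_i),0)$ and $B_j$ covers a terminal sub-arc of length $\max(\angle(v_j,u_j),0)$; since $B$ is involved, the two together do not exhaust the arc, so
\[
\psi \geq \max(\angle(u_i,v_i),0)+\max(\angle(v_j,u_j),0)\geq \angle(u_i,v_i)+\angle(v_j,u_j)=\psi-\varphi,
\]
hence $\varphi\geq 0$. This uses nothing beyond an elementary covering observation and the angle identity, and avoids Lemma~\ref{lemma:master} and Observation~\ref{lemma:psi-plus-phi} entirely.

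Your approach instead routes the inequality through the machinery already built: two applications of Lemma~\ref{lemma:master} to the curves $\gamma_1,\gamma_2$, combined with Observation~\ref{lemma:psi-plus-phi} to evaluate the total length. The computation $|\gamma_1|+|\gamma_2|=\varphi+2\pi/3$ checks out (in particular, $\angccw(c_i-c_{i+1},p_1-c_{i+1})=2\pi/3-\angle(u_i,v_i)\in[0,4\pi/3]$ and symmetrically on the other side, so no wrap-around occurs). What your approach buys is conceptual uniformity with the rest of Section~6, where Lemma~\ref{lemma:master} is the workhorse; what the paper's approach buys is a two-line proof with no auxiliary constructions and no dependence on earlier observations beyond the angle rules.
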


\begin{proof}
It follows from the constraints that the arc of $B$ that goes from $c_{i+1}$ to $c_{j-1}$ counterclockwise and which has length $\psi$ is not completely covered by other disks; in particular, it is not fully covered by $B_i$ and $B_j$.
Note that if $\angle(u_i, v_i) > 0$, then $B_i$ covers the arc of length $\angle(u_i, v_i)$ of $B$, starting at $c_{i+1}$. In other words, $B_i$ covers an arc of length $\max(\angle(u_i, v_i), 0)$, starting at $c_{i+1}$. Similarly, $B_j$ covers an arc of length $\max(\angle(v_j, u_j), 0)$, ending at $c_{j-1}$, see Figure~\ref{fig:only-middle-involved}.


Since $B_{i+2}$ is involved in $\sigma_{ij}$, we have
\begin{align*}
    \psi & \geq\max(\angle(u_i, v_i), 0) + \max(\angle(v_j, u_j), 0) \\
    & \geq\angle(u_i, v_i) + \angle(v_j, u_j) \\
    & \stackrel{\ref{rule:uivi}}{=} \angle(u_i, u_j) + \angle(v_j, v_i) = \psi - \varphi,
\end{align*}
which implies that $\varphi\geq 0$.\end{proof}

\begin{corollary}\label{lemma:middle-involved-side-angles}
If $B$ is involved in $\sigma_{ij}$, then
$$\angccw(c_i - c_{i+1}, c - c_{i+1}) + \angccw(c - c_{j-1}, c_j - c_{j-1})\leq 3\psi - \frac{2\pi}{3} + 2\varphi.$$
\end{corollary}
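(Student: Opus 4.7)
My plan is to use Observation~\ref{lemma:psi-plus-phi} to put the left-hand side into a clean form, then establish the resulting reduced inequality via Observation~\ref{lemma:involved-source} together with a direct geometric lower bound on $\psi$. Write $L_i = \angccw(c_i - c_{i+1}, c - c_{i+1})$ and $L_j = \angccw(c - c_{j-1}, c_j - c_{j-1})$ for the two summands, and set $A = \angccw(c_i - c_{i+1}, c' - c_{i+1})$, $B = \angccw(c' - c_{j-1}, c_j - c_{j-1})$, so that by Observation~\ref{lemma:psi-plus-phi} we have $A + B = \psi + \varphi$. Using the parallelogram identities $c' - c_{i+1} = u_j$ and $c - c_{i+1} = -u_i$, a direct computation gives $\angccw(c' - c_{i+1}, c - c_{i+1}) = \angccw(u_j, -u_i) = \pi - \psi$ (valid for $\psi<\pi$), hence $L_i \equiv A + \pi - \psi \pmod{2\pi}$ and analogously $L_j \equiv B + \pi - \psi \pmod{2\pi}$.

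The main (though not deep) obstacle is to pin down the correct branch of this mod reduction, namely to show $L_i = A + \pi - \psi$ and $L_j = B + \pi - \psi$ literally, with no subtraction of $2\pi$. Equivalently, $A < \pi + \psi$, i.e., $c_i$ lies on the long arc (of length $\pi + \psi$) of the unit circle centered at $c_{i+1}$ between $c$ and $c'$, rather than on the interior of the short arc (of length $\pi - \psi$). The short arc lies in the minor segment of that disk, which sits on the opposite side of the chord $cc'$ from $c_{i+1}$, i.e., inside the triangle $c\,c_{j-1}\,c'$, which is part of the rhombus. Since $|c_i - c_{i+1}| = 1$ and $|c_i - c_{j-1}| \geq 1$, Observation~\ref{lemma:inside-the-rhombus} forbids $c_i$ from being strictly inside the rhombus, and $c_i \neq c$ since $c_i$ is a $2$-disk; this confirms $c_i$ is on the long arc and gives the literal equality for $L_i$. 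The same argument handles $L_j$. Adding, $L_i + L_j = (A + B) + 2(\pi - \psi) = 2\pi - \psi + \varphi$.

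The target inequality now reduces to $4\psi + \varphi \geq 8\pi/3$. Two inputs finish this. First, the involvement of $B$ in $\sigma_{ij}$ forces $\psi > 2\pi/3$: indeed, $B_{i+1}$ covers an arc of length $\pi/3$ of $\partial B\cap R_{ij}$ starting at $c_{i+1}$ and $B_{j-1}$ covers a length-$\pi/3$ arc ending at $c_{j-1}$, so if $\psi \leq 2\pi/3$ these two pieces already cover the full length-$\psi$ arc, contradicting involvement. Second, writing $\eta_i = \angle(u_i, v_i)$ and $\eta_j = \angle(v_j, u_j)$, rules $\{3\}$ and $\{5\}$ give $\varphi = -\eta_i + \psi - \eta_j = \psi - \eta_i - \eta_j$, while Observation~\ref{lemma:involved-source} gives $\psi \geq \max(\eta_i, 0) + \max(\eta_j, 0) \geq \eta_i + \eta_j$. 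Combining, $4\psi + \varphi = 5\psi - (\eta_i + \eta_j) \geq 5\psi - \psi = 4\psi > 8\pi/3$, which is the required inequality and completes the proof.
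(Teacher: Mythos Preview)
Your overall strategy matches the paper's: both routes land on the identity $L_i+L_j=2\pi-\psi+\varphi$ and then finish with $\varphi\ge 0$ (Observation~\ref{lemma:involved-source}) and $\psi\ge 2\pi/3$ (Observation~\ref{lemma:not-covered-2pi-3}). The paper's derivation of the identity is considerably more direct, though: since $c_i-c_{i+1}=v_i$ and $c-c_{i+1}=-u_i$, one has $L_i=\angccw(v_i,-u_i)=\pi-\angle(u_i,v_i)$ immediately from rule~\ref{rule:uivi}, and symmetrically for $L_j$; no detour through $c'$ or branch analysis is needed. Also, your final chain $4\psi+\varphi=5\psi-(\eta_i+\eta_j)\ge 4\psi$ is literally the statement $\varphi\ge 0$, so citing the conclusion of Observation~\ref{lemma:involved-source} is cleaner than re-invoking the inequality from its proof. (Minor: your $\psi>2\pi/3$ should be $\psi\ge 2\pi/3$; at equality the single midpoint of the arc lies on $\partial B_{i+1}\cap\partial B_{j-1}$ but not in the open set $S$, so $B$ is still involved.)

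There is, however, a genuine gap in your branch argument. The claim that the short arc of $\partial B_{c_{i+1}}$ between $c$ and $c'$ lies inside the triangle $c\,c_{j-1}\,c'$ is false in general: a direct computation (put $c_{i+1}$ at the origin, $u_i=(1,0)$, $u_j=(\cos\psi,\sin\psi)$) shows that a point $(\cos\alpha,\sin\alpha)$ of the short arc lies on the $c'$-side of the line $cc_{j-1}$ iff $\sin(\alpha-\psi)\le\sin\psi$, which fails for some $\alpha\in[\psi,\pi]$ whenever $\psi<\pi/2$. In particular, at $\psi=\pi/3$ the point $c_{j-1}$ sits on the short arc and the triangle is inscribed in it, so the arc is entirely outside the triangle except at the vertices. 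Since at this stage you have only $\psi\ge\pi/3$, the inclusion you assert does not hold. The conclusion you want is nonetheless true, and for a simpler reason: the interior of the short arc is exactly the locus $\{p:|p-c_{i+1}|=1,\ |p-c_{j-1}|<1\}$ (another short cosine-rule computation), so $|c_i-c_{j-1}|\ge 1$ alone already forces $c_i$ off it---no appeal to the rhombus is needed.
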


\begin{proof}
According to Observation~\ref{lemma:involved-source}, $\varphi\geq 0$. Also, due to Observation~\ref{lemma:not-covered-2pi-3}, $\psi\geq2\pi/3$. But then
\begin{align*}
    & \angccw(c_i - c_{i+1}, c - c_{i+1}) + \angccw(c - c_{j-1}, c_j - c_{j-1}) \\
    & = (\pi - \angle(u_i, v_i)) + (\pi - \angle(v_j, u_j)) \\
    & = 2\pi - \psi + \varphi \\
    & \leq 2\pi - \psi + \varphi + 4\left(\psi - \frac{2\pi}{3}\right) + \varphi = 3\psi - \frac{2\pi}{3} + 2\varphi.
\end{align*}
\end{proof}

\subsection{Case $k = 1$, finishing the proof}

Let $I$ be the set of disks from $\{B_{i+1}, B, B_{j-1}\}$ which are involved in $\sigma_{ij}$.

\caseitem[subsec:case-000]{1a}{$I = \varnothing$}

There is the only direction jump, and it is between the disks $B_i$ and $B_j$. Denote this direction jump by $\dj$. As $\psi\geq\pi/3$ by Observation~\ref{lemma:obvious-stuff-about-psi}, it suffices to prove that $\dj\leq 2\psi - \frac{\pi}{3} + 2\varphi$.

If $\psi + \varphi\geq 2\pi/3$, then, obviously, $\dj\leq \pi = 2\cdot\frac{2\pi}{3} - \frac{\pi}{3}\leq 2(\psi + \varphi) - \frac{\pi}{3}$. Hence, one may safely assume that $\psi + \varphi\leq 2\pi/3$, which together with the inequality $\psi\geq\pi/3$ gives us

\begin{equation}
\label{eq:straighten}
\psi \geq \frac{\pi}{3}\geq \psi + \varphi - \frac{\pi}{3}\geq \varphi.
\end{equation}

By the triangle inequality, we obtain
$$2\sin\frac{\dj}{2} = |c_i - c_j| = |u_i + v_i - u_j - v_j|\leq |u_i - u_j| + |v_i - v_j| = 2\sin\frac{\psi}{2} + 2\sin\frac{\varphi}{2}.$$ Here we use that in a triangle with two unit sides and angle $\beta$ between them the third side has length $2\sin(\beta/2)$.

If $s = \psi + \varphi\in\left[\frac{\pi}{3}, \frac{2\pi}{3}\right]$ is fixed, then the right hand side is maximized when $|\psi - \varphi|$ is minimized. Indeed, if $z = e^{i\psi/2} + e^{i\varphi/2}$, then $\arg{z} = s/4$; therefore maximizing $\Im{z}$ means maximizing $|z|$, or minimizing the angle between $e^{i\psi/2}$ and $e^{i\varphi/2}$. According to \eqref{eq:straighten}, it is equivalent to setting $\psi=\pi/3$ and $\varphi = s - \pi/3$. Then $$2\sin\frac{\dj}{2}\leq 2\left(\frac{1}{2} + \sin\left(\frac{s}{2} - \frac{\pi}{6}\right)\right).$$ Therefore it is enough to verify that $$\frac{1}{2} + \sin\left(\frac{s}{2} - \frac{\pi}{6}\right)\leq\sin\left(s - \frac{\pi}{6}\right).$$
The last inequality holds because $$\frac{\partial^2}{\partial s^2}\left(\sin\left(s - \frac{\pi}{6}\right) - \sin\left(\frac{s}{2} - \frac{\pi}{6}\right)\right) = \frac{1}{4}\sin\left(\frac{s}{2} - \frac{\pi}{6}\right) - \sin\left(s - \frac{\pi}{6}\right) < 0$$
for $\pi/3\leq s\leq 2\pi/3$ and
$$\frac{1}{2} = \sin\left(s - \frac{\pi}{6}\right) - \sin\left(\frac{s}{2} - \frac{\pi}{6}\right)$$
for $s\in\{\pi/3, 2\pi/3\}$.

\caseitem[subsec:case-001]{1b}{$I = \{B_{i+1}\}$}

There are two direction jumps: between $B_i$ and $B_{i+1}$ of size $\pi/3$ and between $B_{i+1}$ and $B_j$.

Let $t$ be the point of the latter direction jump.
By Observation~\ref{lemma:inside-the-rhombus}, the point $t$ cannot lie inside the rhombus $cc_{i+1}c'c_{j-1}$, and therefore, the point $c'$ lies in the pentagon $c_{j}c_{j-1}cc_{i+1}t$.
Hence, if we apply Lemma~\ref{lemma:master} to the curve following the perimeter of $B_{i+1}$ from $t$ to $c'$ and then passing the perimeter of $B_{j-1}$ from $c'$ to $c_j$, we obtain
\begin{equation}
\angccw(t - c_{i+1}, c' - c_{i+1}) + \angccw(c' - c_{j-1}, c_j - c_{j-1})\geq\frac{\pi}{3}. \label{eqn:with-t}
\end{equation}

Since $B_{i+1}$ is involved in $\sigma_{ij}$, we have
\begin{align}
\angccw(t - c_{i+1}, c' - c_{i+1}) & = \angccw(c_i - c_{i+1}, c' - c_{i+1}) - \angccw(c_i - c_{i+1}, t - c_{i+1}) \notag \\ 
& \leq \angccw(c_i - c_{i+1}, c' - c_{i+1}) - \pi / 3 \label{eqn:rotate-equilateral} \\ 
& = \angle(v_i, u_j) - \pi / 3. \notag
\end{align}

\begin{figure}[h!]
    \centering
    \begin{subfigure}[t]{.4\textwidth}
    \includegraphics[width=\textwidth]{pics/huge-formula-pic.mps}
    \caption{Case~\ref{subsec:case-001}}
    \label{fig:huge-formula}
    \end{subfigure}
    \begin{subfigure}[t]{.4\textwidth}
    \includegraphics[width=\textwidth]{pics/case-3.mps}
    \caption{Case~\ref{subsec:case-101}}
    \label{fig:case-1c}
    \end{subfigure}
    \caption{}
\end{figure}

We are ready to bound $\Dj$. One may refer to Figure~\ref{fig:huge-formula} to follow the explanation.
\begin{align*}
\Dj & = \frac{\pi}{3} + \angccw(t - c_j, t - c_{i+1}) \\
& = \frac{\pi}{3} + \pi - \angccw(t - c_{i+1}, c_j - t)\\
& = \frac{4\pi}{3} - \angccw(t - c_{i+1}, c' - c_{i+1}) + \angccw(c' - c_{j-1}, c' - c_{i+1}) \\ & - \angccw(c' - c_{j-1}, c_j - c_{j-1}) - \angccw(c_j - c_{j-1}, c_j - t).
\end{align*}

Here, the second equality follows from replacing $\angccw(t - c_j, t - c_{i+1})$ by its adjacent angle. Then, since $\angccw(t - c_{i+1}, c_j - t)$ is, by definition, the angle we need to rotate the vector $t - c_{i+1}$ by in order to obtain the vector $c_j - t$; we may first rotate it counterclockwise until we obtain $c' - c_{i + 1}$, then clockwise until $c' - c_{j-1}$, then counterclockwise until $c_j - c_{j-1}$, and, finally, counterclockwise until $c_j - t$. This implies the last equality.

We continue bounding $\Dj$.
\begin{align*}
\Dj & \leq \pi + \psi - (\angccw(t - c_{i+1}, c' - c_{i+1}) + \angccw(c' - c_{j-1}, c_j - c_{j-1})) \\
& \stackrel{\eqref{eqn:with-t}}{\leq} \psi + 2(\angccw(t - c_{i+1}, c' - c_{i+1}) + \angle(u_i, v_j)) \\
& \stackrel{\eqref{eqn:rotate-equilateral}}{\leq} -\frac{2\pi}{3} + \psi + 2(\angle(v_i, u_j) + \angle(u_i, v_j)) \\
& = 3\psi - \frac{2\pi}{3} + 2\varphi.
\end{align*}

Here the first inequality is obtained after applying $\angccw(c' - c_{j-1}, c' - c_{i+1}) = \psi$ and $\angccw(c_j - c_{j-1}, c_j - t)\geq\pi/3$, which follows from the fact that $|t - c_{j-1}|\geq 1$. Due to Observation~\ref{lemma:psi-plus-phi}, the last equality holds.

\caseitem[subsec:case-100]{1b'}{$I = \{B_{j-1}\}$}

This case is similar to the previous one.

\caseitem[subsec:case-101]{1c}{$I = \{B_{i+1}, B_{j-1}\}$}

There are three direction jumps: between $B_i$ and $B_{i+1}$ of size $\pi/3$, between $B_{i+1}$ and $B_{j-1}$ of size $\psi$, and between $B_{j-1}$ and $B_j$ of size $\pi/3$.

According to Observation~\ref{lemma:psi-plus-phi}, $\psi + \varphi = \angle(u_i, v_j) + \angle(v_i, u_j) \ge \pi/3 + \pi/3$, see Figure~\ref{fig:case-1c}. Thus,
$$\Dj = \frac{2\pi}{3} + \psi \leq 3\psi - \frac{2\pi}{3} + 2\varphi,$$
which completes the proof of this case.


\caseitem[subsec:case-010]{1d}{$I = \{B\}$}

There are two direction jumps: between $B_i$ and $B$ and between $B$ and $B_j$.

By the definition, the direction jumps are exactly $\angccw(c_i - c_{i+1}, c - c_{i+1})$ and $\angccw(c - c_{j-1}, c_j - c_{j-1})$. Hence, Corollary~\ref{lemma:middle-involved-side-angles} finishes the proof.

\caseitem[subsec:case-011]{1e}{$I = \{B_{i+1}, B\}$}

There are three direction jumps: between $B_i$ and $B_{i+1}$ of size $\pi/3$, between $B_{i+1}$ and $B_{i+2}$ of size $\pi/3$, and between $B_{i+2}$ and $B_j$. According to Observation~\ref{lemma:not-covered-2pi-3}, we have $\angccw(c - c_{j-1}, c_j - c_{j-1})\geq2\pi/3$, and thus, the sum of first two direction jumps is no more than $\angccw(c_i - c_{i+1}, c_{i+2} - c_{i+1})$; hence,
$$\Dj\leq \angccw(c_i - c_{i+1}, c - c_{i+1}) + \angccw(c - c_{j-1}, c_j - c_{j-1}).$$
By Corollary~\ref{lemma:middle-involved-side-angles}, this does not exceed $3\psi - \frac{2\pi}{3} + 2\varphi$.


\caseitem[subsec:case-110]{1e'}{$I = \{B, B_{j-1}\}$}

This case is similar to the previous one.

\caseitem[subsec:case-111]{1f}{$I = \{B_{i+1}, B, B_{j-1}\}$}

There are four direction jumps, all of size $\pi/3$. 

By Observation~\ref{lemma:not-covered-2pi-3} we have $\psi\geq2\pi/3$, and by Observation~\ref{lemma:involved-source} we have $\varphi\geq 0$. Then

$$3\psi - \frac{2\pi}{3} + 2\varphi\geq 2\pi - \frac{2\pi}{3} = \frac{4\pi}{3} = \Dj.$$


\end{enumerate}

\section{Discussion}

We have shown that $f(3) = 37$, where $f(n)$ is the maximum possible number of disks in a packing of kissing radius $n$. In other words, triangular lattice provides the optimal size of the packing. It is not known whether $f(4) = f(3) + 24 = 61$ or not.

\bibliographystyle{unsrt}
\bibliography{main}

\newpage

\appendix
\begin{appendices}

\section{Proof of Claim~\ref{lemma:can-construct-gamma-ij}}\label{section:proof-of-claim}

Before proving Claim~\ref{lemma:can-construct-gamma-ij}, we first state some properties of Delaunay triangulations. After this we formulate several lemmas, which we then use to finish the proof. In this section, unless specified otherwise, by $\angle pqr$ we mean the smallest of two angles between the rays $[qp)$ and $[qr)$.

\subsection{Properties of Delaunay triangulations}

Recall that a Delaunay triangulation of a finite set of points not belonging to the same line is a triangulation where, for each triangle $pqr$ and any other point $s$ of the set, $s$ does not lie inside the circumcircle of $pqr$. If a Delaunay triangulation is fixed, we call any of its triangles just a \emph{Delaunay triangle}. In particular, if $pqr$ and $qps$ are two Delaunay triangles, then $\angle prq + \angle qsp\leq\pi$. For any triangle $\Delta$ denote by $R(\Delta)$ the circumradius of $\Delta$.

\begin{lemma}\label{lemma:no-obtuse-in-delaunay}
Let $pqrs$ be a convex quadrilateral, such that $pqr$ and $rsp$ are the faces of its Delaunay triangulation. If $R(pqr) < 1 \leq R(rsp)$, then $\angle rsp < \pi/2$.
\end{lemma}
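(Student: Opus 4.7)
The plan is to combine the law of sines on the two triangles that share edge $pr$ with the Delaunay angle inequality stated at the beginning of the subsection, and then argue by contradiction.

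Write $\alpha = \angle pqr$ and $\beta = \angle rsp$, so that $\alpha$ and $\beta$ are the angles opposite the common edge $pr$ in the two Delaunay faces $pqr$ and $rsp$. The law of sines, applied in each triangle, gives
\begin{equation*}
|pr| \;=\; 2R(pqr)\sin\alpha \;=\; 2R(rsp)\sin\beta,
\end{equation*}
so $R(pqr)\sin\alpha = R(rsp)\sin\beta$. Using the hypothesis $R(pqr) < 1 \leq R(rsp)$, we get
\begin{equation*}
\sin\alpha \;=\; \frac{R(rsp)}{R(pqr)}\,\sin\beta \;>\; \sin\beta.
\end{equation*}

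Now suppose for contradiction that $\beta \geq \pi/2$. Because $pqr$ and $rsp$ are faces of a Delaunay triangulation sharing the edge $pr$, the Delaunay inequality recalled in the text gives $\alpha + \beta \leq \pi$, hence $\alpha \leq \pi - \beta \leq \pi/2$. Then both $\alpha$ and $\pi-\beta$ lie in the interval $(0,\pi/2]$, where $\sin$ is monotonically non-decreasing, so $\alpha \leq \pi - \beta$ yields $\sin\alpha \leq \sin(\pi-\beta) = \sin\beta$, contradicting the strict inequality $\sin\alpha > \sin\beta$ just derived. Therefore $\beta < \pi/2$, which is exactly the claim.

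There is no serious obstacle: once one identifies the right pair of angles and recognizes that the relevant information packaged in the circumradii is precisely $\sin\alpha$ versus $\sin\beta$ via the law of sines, the Delaunay inequality does the rest. The only mild care needed is to verify monotonicity of $\sin$ on the correct range, which is why the contradiction hypothesis $\beta \geq \pi/2$ is used to push $\alpha$ into $(0,\pi/2]$ before comparing sines.
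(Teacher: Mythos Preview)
Your proof is correct and essentially identical to the paper's: both combine the law of sines on the shared edge $pr$ with the Delaunay inequality $\angle pqr + \angle rsp \leq \pi$, and use the contradiction hypothesis $\angle rsp \geq \pi/2$ to force $\sin\angle pqr \leq \sin\angle rsp$, which conflicts with $R(pqr) < R(rsp)$. The only cosmetic difference is the order of the steps---you derive $\sin\alpha > \sin\beta$ first and then contradict it, while the paper derives $\sin\angle pqr \leq \sin\angle rsp$ from the contradiction hypothesis and then applies the law of sines.
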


\begin{proof}
We argue by contradiction. Suppose that $\angle rsp \geq \pi/2$. Then, since we have a Delaunay triangulation, $\angle pqr + \angle rsp \leq \pi$, or, equivalently, $\angle pqr \leq \pi - \angle rsp \leq \pi/2$. Thus, $\sin\angle pqr \leq \sin\angle rsp$.

Then, by the law of sines, $$2R(pqr) = \frac{|p - r|}{\sin\angle pqr} \geq \frac{|p - r|}{\sin\angle rsp} = 2R(rsp),$$ which contradicts the assumption.
\end{proof}

Note that the essential condition in this lemma is $R(pqr) < R(rsp)$; however, we strenghtened it by separating both sides with $1$ for more clear applications in the future.





\begin{lemma}\label{lemma:delaunay-sorted-process}
Given a set $S$ of $m$ points on the plane, not belonging to the same line, consider all non-degenerate triangles with vertices among these points in the nondecreasing order of the circumradius. Start with $\mathcal{F} = \varnothing$, and at each step, if the interior of the considered triangle does not contain any of the points and does not intersect anything from $\mathcal{F}$, then add the considered triangle to $\mathcal{F}$.

Then the set $\mathcal{F}$ at the end of the process is the set of faces of some Delaunay triangulation of $S$.
\end{lemma}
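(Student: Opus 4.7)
The plan is to establish two properties of the final $\mathcal{F}$: (a) every triangle in $\mathcal{F}$ has empty circumcircle (i.e., is a Delaunay triangle of $S$), and (b) $\mathcal{F}$ covers $\mathrm{conv}(S)$. A triangulation of $\mathrm{conv}(S)$ by triangles with empty circumcircles is a Delaunay triangulation, so together (a) and (b) yield the lemma.

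Property (b) is the easier one and I would handle it first. Suppose, towards contradiction, that some nonempty open region $U \subseteq \mathrm{conv}(S)$ is not covered by any triangle in the final $\mathcal{F}$. By picking an ``ear'' of the polygonal boundary of $U$, one obtains three points $p, q, r \in S \cap \overline{U}$ whose triangle lies in $\overline{U}$ and has no point of $S$ in its interior. At the step when this triangle was considered, the intermediate $\mathcal{F}$ was a subset of the final one, so the triangle neither overlapped any current member of $\mathcal{F}$ nor contained any point of $S$ in its interior, and therefore would have been admitted; this contradicts $\overline{U}$ being disjoint from $\bigcup_{\Delta \in \mathcal{F}} \Delta$.

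For (a), I would use induction on the order of admission, relying on the classical \emph{Delaunay flip} inequality: if $s \in S$ lies strictly inside the circumcircle of $pqr$ and in the lune opposite $r$, so that $pqrs$ is a convex quadrilateral with diagonal $pq$, then both triangles $prs$ and $qrs$ of the flipped triangulation have strictly smaller circumradius than $pqr$. This follows from the law of sines applied to the chords $pr$ and $qr$ together with monotonicity of inscribed angles as a point moves inside a circle. Now suppose the algorithm is about to admit $\Delta = pqr$ but some $s \in S$ lies strictly inside its circumcircle; since $\Delta$ has empty interior, $s$ lies outside $\Delta$. In the ``edge-lune'' case, $prs$ and $qrs$ together tile the quadrilateral $pqrs \supseteq \Delta$, and by the inductive hypothesis applied to the earlier greedy steps, some previously admitted Delaunay triangle occupying the region of $pqrs$ must overlap $\Delta$, contradicting admissibility. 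The ``corner'' case, where $s$ lies beyond a vertex of $\Delta$, is handled by the analogous flip using the triangle $pqs$.

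The main obstacle is the bookkeeping in (a): one must argue that by the time $\Delta$ is processed, enough of the ``smaller'' Delaunay triangles covering the violating region have actually been admitted, rather than themselves being blocked by yet earlier conflicts. In general position this reduces cleanly to the Delaunay flip inequality and a straightforward induction; degenerate configurations with four or more cocircular points are accommodated by the lemma's permissive phrasing ``some Delaunay triangulation'', where an appropriate tiebreaking rule on the greedy order picks one Delaunay triangulation out of several.
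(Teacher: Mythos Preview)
Your outline for part~(b) matches the paper's argument in spirit and is fine. Part~(a), however, has two genuine problems.

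First, the flip inequality you state is false. It is \emph{not} true that if $s$ lies strictly inside the circumcircle of $pqr$ (in the lune opposite $r$) then both $prs$ and $qrs$ have circumradius smaller than $R(pqr)$. The law-of-sines step gives $R(prs)=\frac{|pr|}{2\sin\angle psr}$ and $R(pqr)=\frac{|pr|}{2\sin\angle pqr}$; monotonicity of inscribed angles yields $\angle psr>\angle pqr$, but this does \emph{not} force $\sin\angle psr>\sin\angle pqr$ once $\angle pqr$ is obtuse. A concrete counterexample: $p=(-1,0)$, $q=(1,0)$, $r=(1.5,0.5)$, $s=(0,-0.1)$ gives a convex quadrilateral with $s$ inside the circumcircle of $pqr$, yet $R(prs)\approx 2.72>R(pqr)\approx 1.80$. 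What \emph{is} true, and what the paper proves, is that at least one of the two angles $\angle psr,\angle qsr$ is acute, and for that one the circumradius comparison goes through.

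Second, even with the corrected inequality, your inductive scheme does not close. Knowing that, say, $qrs$ was considered before $pqr$ does not tell you it was \emph{admitted}: it may have been blocked by a point of $S$ lying in $qrs\setminus pqr$, or by a previously admitted triangle lying entirely across $pq$ from $r$. Neither obstruction blocks $pqr$, so you cannot conclude that $pqr$ is rejected. You flag this yourself as ``bookkeeping'', but it is the heart of the matter and is not resolved. The paper sidesteps it completely by arguing at the \emph{final} stage rather than inductively: once $\mathcal{F}$ is known to be a full triangulation, a non-Delaunay edge $pr$ has \emph{both} adjacent faces $pqr,rsp\in\mathcal{F}$. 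Now the smaller flipped triangle (say $rsq$, with $R(rsq)<R(pqr)\le R(rsp)$) lies inside the quadrilateral $pqrs=pqr\cup rsp$, so it contains no point of $S$ in its interior, and anything that could have blocked it when it was considered would persist in the final $\mathcal{F}$ and hence overlap $pqr$ or $rsp$ --- impossible. Thus $rsq$ should have been admitted, contradicting $pqr,rsp\in\mathcal{F}$.
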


\begin{remark}A similar minimization property of Delaunay triangulations was already discovered, for example, in~\cite[Theorem~4]{Musin1997Delaunay}.
\end{remark}

\begin{proof}
It is obvious that at the end $\mathcal{F}$ will be a triangulation of $S$. Indeed, suppose that $\cup\mathcal{F}\neq\mathrm{conv}\,S$. This means that there is a point $p\in\mathrm{conv}\,S\setminus\cup\mathcal{F}$, and we may assume that $p$ does not belong to any segment between any two points of $S$, as otherwise the set $\mathrm{conv}\,S\setminus\cup\mathcal{F}$ would have zero area. Then there is at least one way to finish the triangulation, hence the set of all triangles containing $p$ and whose interiors do not intersect $\cup\mathcal{F}$ is not empty. Therefore, we should have added to $\mathcal{F}$ any of such triangles with minimum circumradius.

Also, for any triangle $pqr\in\mathcal{F}$, no point $t$ belongs to its boundary. Indeed, if there is a point $t\in [p, q]$, then we should have added $ptr$ or $qtr$ before $pqr$.

If the final triangulation is not Delaunay, then it is possible to do a ``flip''; that is, there is a convex quadrilateral $pqrs$ such that ${pqr}\in\mathcal{F}$ and ${rsp}\in\mathcal{F}$, and also $\angle{pqr} + \angle{rsp} > \pi$.

\begin{figure}[h!]
    \centering
    \includegraphics{pics/iterative-delaunay.mps}
    \caption{Lemma~\ref{lemma:delaunay-sorted-process}}
    \label{fig:delaunay-flip}
\end{figure}


Without loss of generality, assume that $R({pqr})\leq R({rsp})$. Let $\omega$ be the circumcircle of ${pqr}$. Since $\angle{pqr} + \angle{rsp} > \pi$, we know that $s$ lies inside $\omega$. This implies that $\angle{rsq} > \angle{rpq}$ and $\angle{qsp} > \angle{qrp}$. Since $\angle{rsq} + \angle{qsp} = \angle{rsp} < \pi$, at least one of $\angle{rsq}$ and $\angle{qsp}$ is acute. If $\angle{rsq} < \pi/2$ then $\sin\angle{rsq} > \sin\angle{rpq}$, and, by the law of sines,
$$R({rsq}) = R({rpq})\cdot\frac{\sin\angle{rpq}}{\sin\angle{rsq}} < R({rpq})\leq R({rsp}).$$
Then we should have considered ${rsq}$ before any of ${pqr}$ and ${rsp}$ and added it to~$\mathcal{F}$, which leads to a contradiction.

The case when $\angle{qsp} < \pi/2$ is analogous.
\end{proof}

We also remind that a \emph{Voronoi diagram} of a set $S$ of points is the division of the plane into (possibly unbounded) regions, which are also called \emph{cells}, and the region corresponding to a point $p\in S$ is defined as
$$\{x\in\mathbb{R}^2\,\colon\,\forall q\in S\quad |x - p|\leq |x - q|\}.$$
It is clear that each cell is a polyhedron, that is, an intersection of some halfplanes. It is also known that Voronoi diagram is dual to Delaunay triangulation in a sense that two points are connected by an edge in the Delaunay triangulation if and only if their corresponding Voronoi cells share a side, with some minor nuances.

In particular, a set of points may have several Delaunay triangulations, if there is a circle containing more than three points of $S$ on its boundary and no points from $S$ inside. In this case the points on this circle may be triangulated arbitrarily, and the Voronoi cells of any of them contains the center of this circle. So it would be formally correct to say that if two Voronoi cells share a side, then the corresponding points are connected in every Delaunay triangulation, and if two Voronoi cells share a point, then the corresponding points may or may not be connected in a Delaunay triangulation.

We will use the duality in the following form: If for some points $p$ and $q$ from $S$ and $x$ from $\mathbb{R}^2$ we have
$$\forall r\in S\setminus\{p, q\}\quad |x - p| = |x - q| < |x - r|,$$
then $p$ and $q$ are connected in any Delaunay triangulation of $S$.

\subsection{Auxiliary lemmas}

Recall that we are given a packing $\mathcal{P}$, and $(c_1, \ldots, c_n)$ is the cyclic sequence of all centers of disks from $\mathcal{P}$, in the order of traversal. The corresponding disks of unit diameter are denoted by $D_i$, and $B_i$ is the open disk with unit radius, centered at $c_i$. We say that $\mathcal{D}_{ij} = (D_i, \ldots, D_j)$ is a subsegment, if $D_i$ and $D_j$ are two consecutive $2$-disks. For any subsegment $\mathcal{D}_{ij}$ we denote by $R_{ij}$ the region bounded by segments $[c_i, c_{i+1}]$, \ldots, $[c_{j-1}, c_j]$, and two rays going from $c_i$ and $c_j$ in the direction from the origin, which is also the center of the $0$-disk; see Figure~\ref{fig:regions} on page~\pageref{fig:division-into-regions}. The union of all $B_i$ is denoted by $S$, and $\partial{S}\cap R_{ij}$ is denoted by $\sigma_{ij}$. If $D_k\in\mathcal{D}_{ij}$ and $\partial{B_k}\cap\sigma_{ij}\neq\varnothing$, we say that $B_k$ is \emph{involved} in $\sigma_{ij}$.

We want to show that it is possible to remove some points from the sequence $(c_1, \ldots, c_n)$, so that, if we construct the curve $\gamma$ on the remaining points, it will be valid, in particular, in terms of going counterclockwise.

Let $\mathcal{F}$ be a Delaunay triangulation of the set of points $\{c_1, \ldots, c_n\}$, and let $\{F_1, \ldots\}$ be the set of its triangular faces.
It can be seen that $\mathcal{F}$ contains an edge between points $c_i$ and $c_{i+1}$. Indeed, if we consider the Voronoi diagram of the set of disks $\{c_1, \ldots, c_n\}$, then, since disks $D_i$ and $D_{i+1}$ touch each other, their common point lies on the common edge of Voronoi cells corresponding to them, as this point is strictly outside all other disks. Since $\mathcal{F}$ is dual to the Voronoi diagram, points $c_i$ and $c_{i+1}$ are connected in $\mathcal{F}$.

Let $T = \bigcup_{i=1}^n[c_i, c_{i+1}]$ be the drawing of the $\mathcal{P}$-tree on the plane. We also define $$E = \bigcup\{F_i\,\colon\, R(F_i) < 1\}\cup T.$$

\begin{remark}The set of all Delaunay triangles and edges with \emph{covering radius} not exceeding $1$ (that is, triangles and edges that can be covered by a disk of unit radius) corresponds to a simplicial complex known as \emph{$\alpha$-complex}; see the survey~\cite{alpha_shapes}. We, however, are interested in circumradius instead of covering radius. It is also crucial that the circumradius of a triangle must be strictly less than $1$.
\end{remark}

Here and below, if $t$ is a point, we denote the open disk of unit radius centered at $t$ by $B_t$.

\begin{lemma}\label{lemma:small-R-is-good}
Let $pqr$ be a triangle. Let $K_q$ be the convex cone $\{q + \alpha(p - q) + \beta(r - q)\,\colon\,\alpha, \beta\geq 0\}$, and let $s_q$ be the arc $\partial B_q\cap K_q$. If $R(pqr) < 1$, then $s_q\subset B_p\cup B_r$. 
\end{lemma}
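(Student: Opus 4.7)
The plan is to parametrize the arc $s_q$ explicitly and reduce the inclusion to a trigonometric inequality, which I then establish by a monotonicity argument in the circumradius $R := R(pqr)$.

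First, I parametrize $s_q$ by the angle $\theta \in [0, \gamma]$ measured counterclockwise from the ray $q \to p$, where $\gamma := \angle pqr$ is the opening angle of $K_q$. Writing $x(\theta)$ for the corresponding point on $\partial B_q$, the law of cosines gives
$$|x(\theta) - p|^2 = 1 + |pq|^2 - 2|pq|\cos\theta.$$
Hence $x(\theta) \in B_p$ iff $\cos\theta > |pq|/2$, i.e., $\theta < \arccos(|pq|/2)$. By the symmetric computation from the $q\to r$ side, $x(\theta) \in B_r$ iff $\theta > \gamma - \arccos(|qr|/2)$. Thus the inclusion $s_q \subset B_p \cup B_r$ is equivalent to the strict inequality
$$\arccos(|pq|/2) + \arccos(|qr|/2) > \gamma.$$

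Next, I bring in $R$ via the law of sines. Let $A$, $B$, $C$ denote the angles of $pqr$ at $p$, $q$, $r$; then $B = \gamma$, $|pq| = 2R\sin C$, and $|qr| = 2R\sin A$. Define
$$g(R) := \arccos(R\sin A) + \arccos(R\sin C).$$
Since $\arccos$ is strictly decreasing on $[0,1]$ and non-degeneracy of $pqr$ yields $\sin A,\sin C > 0$, the function $g$ is strictly decreasing in $R$ on $[0,1]$ (both arguments stay in $[0,1]$ because $R\sin A \leq R \leq 1$, likewise for $C$). It therefore suffices to show $g(1) \geq B$.

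Finally, the bound $g(1) \geq \pi - A - C = B$ unfolds summand-wise to the elementary inequality
$$\arccos(\sin x) \geq \pi/2 - x, \quad x \in [0,\pi],$$
which is an equality for $x \leq \pi/2$ and, for $x > \pi/2$, reads $x - \pi/2 > \pi/2 - x$. Summing over $x = A$ and $x = C$ gives $g(1) \geq B$, and combined with the strict monotonicity we get $g(R) > g(1) \geq B = \gamma$ whenever $R < 1$, as required. The only subtlety is preserving strict inequality throughout, which is automatic from the strict assumption $R < 1$ together with the strict monotonicity of $\arccos$; there is no substantive obstacle beyond these routine computations.
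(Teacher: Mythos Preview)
Your proof is correct and takes a genuinely different route from the paper's. The paper argues geometrically via Voronoi cells: the Voronoi cell of $q$ among $\{p,q,r\}$, restricted to $K_q$, is contained in the disk with diameter $oq$ (where $o$ is the circumcenter), and since $|oq|=R<1$ this disk sits inside $B_q$; hence every point of $s_q=\partial B_q\cap K_q$ lies outside the Voronoi cell of $q$ and is therefore closer to $p$ or $r$ than to $q$. Your approach instead reduces the inclusion to the single trigonometric inequality $\arccos(R\sin A)+\arccos(R\sin C)>B$ via the law of cosines and the law of sines, and then dispatches it by monotonicity in $R$ together with the boundary identity $\arccos(\sin x)=|\pi/2-x|$. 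The paper's argument is more conceptual and hints at why the converse (Lemma~\ref{lemma:covered-arc-implies-small-R}) should also hold, since the Voronoi picture is reversible; your argument is more elementary, entirely self-contained, and sidesteps the need to verify that the restricted Voronoi cell really is inscribed in the Thales circle on $oq$ (a step the paper leaves as ``it can be shown'').
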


\begin{proof}
Let $C_q$ be the part of the Voronoi cell of points $\{p, q, r\}$ corresponding to the point $q$, restricted on $K_q$. Let $o$ be the circumcenter of $pqr$. It can be shown that $C_q$ lies inside the circle with diameter $oq$. Since $|o - q| = R(pqr) < 1$, it implies that this circle, in turn, belongs to $B_q$. Therefore, $s_q$ is covered by Voronoi cells of $p$ and $r$; thus, for every point $t$ of $s_q$ either $|t - p|$ or $|t - r|$ is less than $|t - q|$, which is $1$. Therefore, $s_q\subset B_p\cup B_r$.
%
\end{proof}

\begin{lemma}\label{lemma:E-is-connected}
$E$ is simply connected, or, equivalently, $\mathbb{R}^2\setminus E$ is connected.
\end{lemma}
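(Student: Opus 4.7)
The plan is to prove simple connectedness of $E$ by building it up from $T$, adjoining one small Delaunay triangle at a time in the order of nondecreasing circumradius provided by Lemma~\ref{lemma:delaunay-sorted-process}, and tracking the homotopy type along the way. Define $E_0=T$, let $F_1,\ldots,F_s$ be the Delaunay triangles with circumradius $<1$ in that order, and set $E_k=E_{k-1}\cup F_k$ so that $E_s=E$. Since $T$ is a tree it is contractible, giving the base case, and the inductive claim is that every $E_k$ is simply connected.

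For the inductive step, observe that tree edges are themselves Delaunay edges, so they never enter the interior of any Delaunay triangle, and the sorted construction of Lemma~\ref{lemma:delaunay-sorted-process} guarantees that the interior of $F_k$ meets no previously added triangle. Hence $E_{k-1}\cap F_k\subseteq\partial F_k$. Attaching a closed topological disk to a simply connected planar set along a nonempty connected portion of its boundary preserves simple connectedness, so the induction reduces to showing that $E_{k-1}\cap\partial F_k$ is connected. This intersection always contains the three vertices of $F_k$ (they lie in $T\subseteq E_{k-1}$), so connectedness holds as soon as at least two sides of $F_k$ lie in $E_{k-1}$; any two sides of a triangle share a vertex, so together with the three vertices they glue into a single arc of $\partial F_k$. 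A side of $F_k$ lies in $E_{k-1}$ iff it is either a tree edge or is shared with some previously added $F_j$ with $j<k$.

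The main obstacle, and where all the real work lies, is verifying this two-sides property: each small Delaunay triangle $F_k$ must have at least two sides that are tree edges or are shared with an already-processed small triangle. I would prove this by contradiction. Suppose $F_k=pqr$ has two sides, say $pq$ and $qr$, which are neither in $T$ nor shared with an earlier $F_j$. Then each such side either lies on the convex-hull boundary or is shared with a Delaunay triangle of circumradius strictly greater than $R(F_k)$; in either case Lemma~\ref{lemma:no-obtuse-in-delaunay}, applied to the convex quadrilaterals formed by $F_k$ and its neighbors across $pq$ and $qr$, forces the angles of those neighbors opposite the shared edges (equivalently, the angles $\angle r$ and $\angle p$ on the $F_k$ side, via the acute-angle conclusion of that lemma) to be acute.

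Combining this acuteness with the angular constraint established at the end of the proof of Observation~\ref{lemma:far} (at any $2$-disk center, every other center subtends an angle of at most $2\pi/3$ with the ray toward the origin), the Delaunay defining property of $F_k$, and the fact that distinct centers lie at distance at least $1$, one gets incompatible bounds on the local geometry at $q$. The case split is organized by the layers to which $p,q,r$ belong, the most delicate case being when all three are $2$-disks or otherwise lie near the outer layer of $\mathcal{P}$, where the angular constraint from Observation~\ref{lemma:far} is essential. In each case one of the hypothesized missing sides is forced to actually be a tree edge or a side of an earlier $F_j$, which contradicts the assumption and completes the induction.
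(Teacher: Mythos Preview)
Your inductive framework is sound and is essentially how the paper packages the conclusion after the lemma, but the entire content of the lemma is the ``two-sides property'' you isolate, and your argument for it does not go through. The use of Lemma~\ref{lemma:no-obtuse-in-delaunay} is misapplied: that lemma says the angle \emph{in the large neighbor} opposite the shared edge is acute, not the angle $\angle r$ or $\angle p$ in $F_k$; there is no ``equivalently'' between these (the Delaunay inequality $\angle r+\angle s\le\pi$ gives nothing once $\angle s<\pi/2$). You also never verify the convexity hypothesis of that lemma, and you say nothing substantive about the convex-hull-boundary case. More seriously, organizing the case split by the layers of $p,q,r$ is not the right decomposition and nothing concrete is checked; for instance, the configuration of the paper's second case (only $c_1c_4c_5$ small while both triangles in the quadrilateral $c_1c_2c_3c_4$ are large) genuinely has only one side of the small triangle in $E_{k-1}$, and ruling it out requires the metric estimate $|c_1-c_4|\ge 2$, not an angle-chasing consequence of Lemma~\ref{lemma:no-obtuse-in-delaunay}.

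The missing idea is localization. The paper first uses the angular bound from Observation~\ref{lemma:far} to show that every Delaunay triangle with circumradius $<1$ lies entirely in a single region $R_{ij}$; this reduces the problem to at most five centers and three Delaunay faces per region, where one can exhaust the possible triangulations and the possible small/large patterns by hand. Without that reduction your ``case split'' has no finite structure to grab onto. If you want to salvage the inductive presentation, keep your setup but replace the last two paragraphs by the paper's localization step followed by its four-case analysis inside a five-point region.
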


\begin{proof}
Note that every triangle with circumradius less than $1$ has to be in a single region $R_{ij}$.
Indeed, suppose that some Delaunay triangle $pqr$ is not contained in any $R_{ij}$. This means that for some $2$-disk $D_k$ the triangle $pqr$ intersects the ray $[c_kf_k)$, where $f_k = c_k + \frac{c_k}{|c_k|}$.
Indeed, since, without loss of generality, the segment $[p, q]$ has to intersect the border between regions, and it cannot intersect $T$, we can assume that $[p, q]$ intersects some $[c_kf_k)$. We may additionally assume that neither $p$ nor $q$ coincides with $c_k$.

As shown in the proof of Observation~\ref{lemma:far}, angles $\angle pc_kc$ and $\angle cc_kq$ do not exceed $2\pi/3$, as each of them can be represented as the sum of two angles not exceeding $\pi/3$. Since the segment $[p, q]$ intersects the ray $[c_kf_k)$, we have $\angle pc_kq\geq2\pi/3$. This also implies that $|p - q|\geq \sqrt{3}$, due to the law of cosines.

If $r = c_k$, then $R(pc_kq)\geq 1$ due to the law of sines. Otherwise we can assume that $r$ and $c_k$ are at the different sides from the line through $p$ and $q$, as in the other case one we could take one of $[q, r]$ and $[p, r]$ instead of $[p, q]$. Since $pqr$ is a Delaunay triangle, the circumcircle of $pqr$ does not contain $c_k$. Therefore, $\angle prq\leq\pi - \angle pc_kq\leq\pi/3$, and due to the law of sines, $2R(pqr)\geq \frac{|p - q|}{\sin\angle prq}\geq 2$.

\medskip
Therefore, it suffices to prove the lemma separately for each subsegment $\mathcal{D}_{ij}$. More specifically, we need to prove that among all Delaunay faces within the corresponding region no triangle with small circumradius ``blocks'' a triangle with large circumradius. The case when $j = i + 2$ is trivial, so we will stick to the case when $j = i + 4$. For simplicity, we may assume that $i = 1$ and $j = 5$, and thus, $c_3 = c$.

We will keep in mind that $|c_k - c_{k+1}| = 1$ for every $k$. In particular, the inequality $R(c_{k-1}c_kc_{k+1}) < 1$ is equivalent to the inequality $\angle c_{k-1}c_kc_{k+1} < 2\pi/3$. We will also only consider triangles which are in $R_{ij}$.

Without loss of generality, we can assume that one of the following cases takes place; see Figure~\ref{fig:delaunay-blocking-cases} (in all cases below, when we mention $R(\Delta)$, we assume that $\Delta$ is a Delaunay triangle).

\begin{figure}[h!]
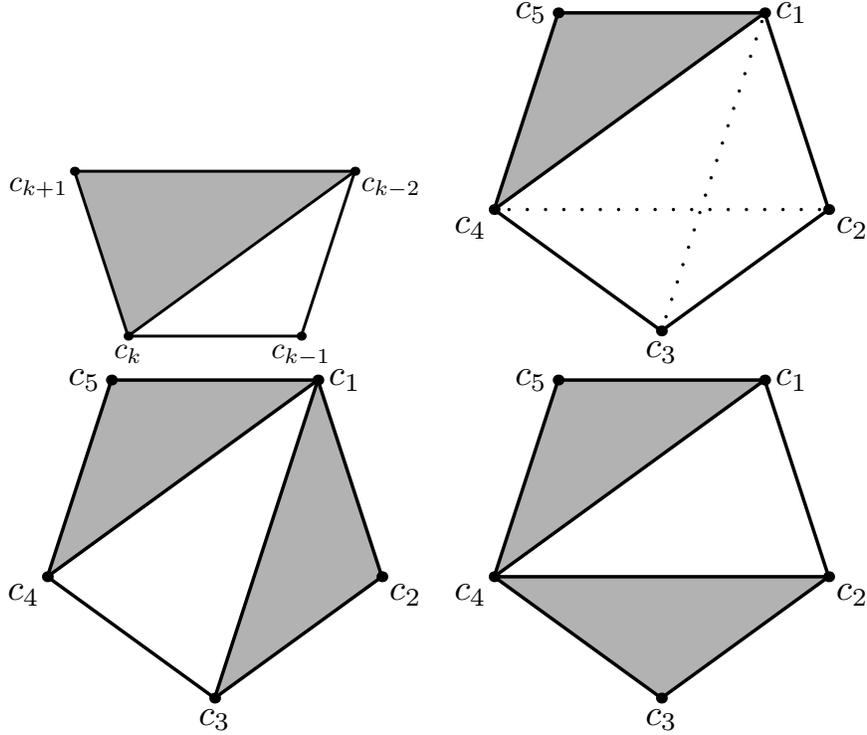

    \centering
    \begin{subfigure}[t]{.48\textwidth}
    \includegraphics[width=.95\textwidth]{pics/delaunay-blocking-case-1.mps}
    \end{subfigure}
    \begin{subfigure}[t]{.48\textwidth}
    \includegraphics[width=.95\textwidth]{pics/delaunay-blocking-case-2.mps}
    \end{subfigure}
    \begin{subfigure}[t]{.48\textwidth}
    \includegraphics[width=.95\textwidth]{pics/delaunay-blocking-case-3.mps}
    \end{subfigure}
    \begin{subfigure}[t]{.48\textwidth}
    \includegraphics[width=.95\textwidth]{pics/delaunay-blocking-case-4.mps}
    \end{subfigure}
    \caption{Cases from Lemma~\ref{lemma:E-is-connected}. Gray triangles have circumradii less than $1$.}
    \label{fig:delaunay-blocking-cases}
\end{figure}

\begin{itemize}
    \item $R(c_{k-2}c_{k-1}c_k)\geq 1 > R(c_{k-2}c_kc_{k+1})$ for $k = 3$, $k = 4$, or $k = 5$ (if we assume that $c_6 = c_1$).
    Since $R(c_{k-2}c_{k-1}c_k)\geq 1$, we have $\angle c_{k-2}c_{k-1}c_k\geq 2\pi/3$. Furthermore, since $R(c_{k-2}c_kc_{k+1}) < 1$, all angles of the triangle $c_{k-2}c_kc_{k+1}$ are less than $2\pi/3$, as otherwise it would have a side of length at least $\sqrt3$ opposed to the angle at least $2\pi/3$, which would contradict the law of sines. This implies that all inner angles of the quadrilateral $c_{k-2}c_{k-1}c_kc_{k+1}$ are less than $\pi$, or, equivalentlty, that the quadrilateral $c_{k-2}c_{k-1}c_kc_{k+1}$ is convex. This contradicts Lemma~\ref{lemma:no-obtuse-in-delaunay}. 
    
    \item $R(c_1c_4c_5) < 1$, $c_1c_2c_3c_4$ is a quadrilateral (maybe non-convex), which is divided by $\mathcal{F}$ into two triangles, both having circumradius at least $1$.
    In this case, either $\angle c_1c_2c_3$ in the quadrilateral $c_1c_2c_3c_4$ is at least $\pi$, and hence no more than $4\pi/3$, because $\angle c_1c_2c_4\leq\pi$ and $\angle c_4c_2c_3\leq\pi/3$; or $R(c_1c_2c_3)\geq 1$ due to Lemma~\ref{lemma:delaunay-sorted-process}, and $\angle c_1c_2c_3\geq 2\pi/3$. Similarly, $\angle c_2c_3c_4\in[2\pi/3, 4\pi/3]$.
    Then $|c_1 - c_4|\geq 2$, because the distance between the projections of $c_1$ and $c_4$ onto the line $(c_2, c_3)$ is at least $1 + 2\cos(\pi/3)$, which contradicts the fact that $R(c_1c_4c_5) < 1$.
    
    \item $R(c_1c_4c_5) < 1$, $R(c_1c_2c_3) < 1$, $R(c_1c_3c_4)\geq 1$. By the law of sines for $c_3c_1c_4$, we obtain
    $$\frac{|c_3 - c_4|}{\sin\angle c_3c_1c_4} \geq 2 \Rightarrow \sin\angle c_3c_1c_4\leq\frac12.$$
    Since the shortest side of $c_1c_3c_4$ is $c_3c_4$, this excludes the possibility that $\angle c_3c_1c_4\geq5\pi/6$, leaving us with $\angle c_3c_1c_4\leq\pi/6$.
    
    Then, as $R(c_1c_2c_3) < 1$, we know that $|c_1 - c_3| < \sqrt{3}$. By the law of sines for $c_1c_4c_3$, we have
    $$\frac{|c_1 - c_3|}{\sin\angle c_1c_4c_3} \geq 2 \Rightarrow \sin\angle c_1c_4c_3 < \frac{\sqrt 3}{2}.$$ Since $\angle c_1c_4c_3\leq \pi - \angle c_1c_2c_3\leq2\pi/3$, we have $\angle c_1c_4c_3 < \pi/3$. In particular, we know now that all of the angles $\angle c_3c_4c_1$, $\angle c_1c_4c_5$, $\angle c_5c_1c_4$, $c_4c_1c_3$ are acute, which means that the quadrilateral $c_1c_5c_4c_3$ is convex. Finally, by Lemma~\ref{lemma:no-obtuse-in-delaunay}, $\angle c_1c_3c_4 < \pi/2$, which leads to $c_1c_3c_4$ having angles with sum less than $\pi$, thus a contradiction.
    
    \item $R(c_1c_4c_5) < 1$, $R(c_2c_3c_4) < 1$, $R(c_1c_2c_4)\geq 1$. This case is completely analogous to the previous one.
\end{itemize}
\end{proof}

According to Lemma~\ref{lemma:E-is-connected}, all triangles $\Delta$ of $\mathcal{F}$ with $R(\Delta) < 1$ can be ordered in such a way, that, if we add them to $T$ one by one, the union is always simply connected. Formally, if $\mathcal{F} = \{F_1, \ldots, F_m\}$, define $\mathcal{F}_k = \{F_1, \ldots, F_k\}$. Also define $E_0 = T$ and $E_k = E_{k-1}\cup F_k$ for all $k\in[m]$. In particular, $\mathcal{F}_0 = \varnothing$, $\mathcal{F}_m = \mathcal{F}$, and $E_m = E$. Then, due to Lemma~\ref{lemma:E-is-connected}, we may assume that all $E_i$ are simply connected. 
In particular, in the rest of the paper, when we write $F_i = pqr$, we assume that $p$, $q$, $r$ are three consecutive vertices of $\partial E_i$ in this order, unless specified otherwise.

Let $(d_1, \ldots, d_k) = (c_{i_1}, \ldots, c_{i_k})$ be the cyclic sequence of all endpoints of the counterclockwise traversal of $\partial{E}$, and let $I = \{i_1, \ldots, i_k\}$.
If $p$, $q$, $r$ are some points with $q\neq p, r$, then define $\ccwarc{q}{p}{r}$ as the arc of $\partial B_q$ going from the ray $[qp)$ to $[qr)$ counterclockwise. If the rays $[qp)$ and $[qr)$ coincide, we assume that $\ccwarc{q}{p}{r}$ has length $2\pi$, not $0$.
For all $j$ denote by $s_j$ the arc $\ccwarc{d_j}{d_{j-1}}{d_{j+1}}$.
For all $j$ put
$$K_j = d_j + \{\lambda (x - d_j)\,\colon\,\lambda\geq 0, x\in s_j\}.$$
In particular, $s_j = \partial B_{i_j}\cap K_j$. Note that, unlike in Lemma~\ref{lemma:small-R-is-good}, the cone $K_j$ does not have to be convex.

The following is the inverse of Lemma~\ref{lemma:small-R-is-good}.

\begin{lemma}\label{lemma:covered-arc-implies-small-R}
$s_j\not\subset B_{i_{j-1}}\cup B_{i_{j+1}}$.
\end{lemma}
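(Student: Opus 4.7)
The plan is a proof by contradiction. Assume $s_j \subset B_{i_{j-1}} \cup B_{i_{j+1}}$. Since $|d_j - d_{j\pm 1}| \geq 1$, each disk $B_{i_{j\pm 1}}$ intersects $\partial B_{i_j}$ in an arc of angular extent $2\arccos(|d_j - d_{j\pm 1}|/2) \leq 2\pi/3$, so the assumption forces
\[|s_j| \leq \arccos(|d_j - d_{j-1}|/2) + \arccos(|d_j - d_{j+1}|/2) \leq \tfrac{2\pi}{3}.\]
Hence $|s_j| < \pi$, the cone $K_j$ is convex, and $s_j$ coincides with the convex-cone arc at $d_j$ of the triangle $T^* := d_{j-1}d_jd_{j+1}$. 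The trigonometric identity underlying Lemma~\ref{lemma:small-R-is-good} also yields its converse, so in this convex regime the open-disk covering $s_j \subset B_{i_{j-1}} \cup B_{i_{j+1}}$ is equivalent to $R(T^*) < 1$; therefore the assumption forces $R(T^*) < 1$, i.e., a strict version of the displayed inequality.

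Next I would derive a contradiction from $R(T^*) < 1$, splitting by whether $T^*$ is a face of the Delaunay triangulation $\mathcal{F}$. If $T^*$ is Delaunay, then by Lemma~\ref{lemma:delaunay-sorted-process} it is added to $\mathcal{F}$ (as $R(T^*)<1$), so $T^* \subseteq E$; but then $T^*$ locally fills the wedge $K_j$ at $d_j$ and the counterclockwise traversal of $\partial E$ cuts the corner from $d_{j-1}$ directly to $d_{j+1}$ along the edge $d_{j-1}d_{j+1}$ of $T^*$, bypassing $d_j$ and contradicting that $d_{j-1},d_j,d_{j+1}$ are three consecutive vertices of $\partial E$. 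Otherwise, $T^*$ is not Delaunay, some other site lies strictly inside its circumcircle, and in $\mathcal{F}$ the vertex $d_j$ has a chain of intermediate Delaunay neighbors $x_1,\ldots,x_l$ in the open wedge $K_j$ (CCW between $d_{j-1}$ and $d_{j+1}$); setting $x_0:=d_{j-1}$ and $x_{l+1}:=d_{j+1}$, the triangles $T_k := x_{k-1}d_jx_k$ are Delaunay with interiors locally exterior to $E$, so $R(T_k)\geq 1$ for every $k$.

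Applying the same converse to each $T_k$ gives $\angccw(x_{k-1}-d_j,\, x_k-d_j) \geq \arccos(|d_j-x_{k-1}|/2) + \arccos(|d_j-x_k|/2)$, and summing over $k=1,\ldots,l+1$ telescopes to
\[|s_j| \geq \arccos(|d_j-d_{j-1}|/2) + \arccos(|d_j-d_{j+1}|/2) + 2\sum_{k=1}^{l}\arccos(|d_j-x_k|/2),\]
contradicting the strict version of the first displayed inequality coming from $R(T^*) < 1$. I expect the main obstacle to be in this second case: one must verify that the intermediate Delaunay neighbors $x_k$ really lie within distance $2$ of $d_j$ (so that the $\arccos$-terms are defined and positive) and that the chain $x_0,x_1,\ldots,x_{l+1}$ partitions the wedge by Delaunay triangles, both of which should follow from the local packing geometry and Delaunay properties around $d_j$ developed in the preceding lemmas.
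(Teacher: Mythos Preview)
Your plan contains a genuine gap at Step~4 (and, by the same token, at Step~7): the claimed converse of Lemma~\ref{lemma:small-R-is-good} is false. The forward direction of that lemma says $R(pqr)<1 \Rightarrow s_q\subset B_p\cup B_r$; you assert the reverse implication ``covering $\Rightarrow R<1$'' as a trigonometric identity, but this fails precisely when the angle at $p$ or at $r$ is obtuse. Concretely, take $p=(0,0)$, $q=(1.9,0)$, $r=(0.95,0.4)$. Then $|pq|=1.9$, $|pr|=|qr|\approx 1.03$ (all pairwise distances $\geq 1$), the wedge angle is $\angle pqr\approx 0.40<\pi$, and the covered prefix/suffix lengths are $\arccos(0.95)+\arccos(0.515)\approx 1.35$, so $s_q\subset B_p\cup B_r$; yet $R(pqr)\approx 1.33>1$, with $\angle prq\approx 134^\circ$ obtuse. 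Thus the covering does \emph{not} force $R(T^*)<1$, and the displayed strict inequality you need never gets established.

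This is exactly the obstruction the paper's proof is designed to remove. The paper does \emph{not} try to deduce $R(T^*)<1$; instead it first argues directly that neither $\angle qrp$ nor $\angle rpq$ can be obtuse. The point is that if, say, $\angle qrp>\pi/2$, then $|pq|>1$, so $pq$ is not a tree edge and must bound a Delaunay face $psq\subset E$ with $R(psq)<1$ on the interior side; the Delaunay empty-circle property for $psq$ forces $\angle psq+\angle qrp\leq\pi$, and the law of sines then gives $R(psq)\geq R(pqr)$, a contradiction. Once both base angles are $\leq\pi/2$, the circumcenter $o$ of $pqr$ lies in $K_q$, and the point where $[q,o]$ meets $s_q$ is at distance $\geq 1$ from both $p$ and $r$, giving the uncovered point directly. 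Your Step~7 suffers from the same defect: from $R(T_k)\geq 1$ you cannot conclude $\angccw(x_{k-1}-d_j,x_k-d_j)\geq\arccos(|d_j-x_{k-1}|/2)+\arccos(|d_j-x_k|/2)$ without first controlling the angles at $x_{k-1}$ and $x_k$, so the telescoping sum does not go through. The missing idea, in short, is the non-obtuseness argument via the adjacent small-circumradius Delaunay face on the $E$-side of each boundary edge.
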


\begin{proof}
For simplicity, let $p = d_{j-1}$, $q = d_j$, $r = d_{j+1}$. Also, define for consistency $s_q = s_j$ and $K_q = K_j$.

First of all, if $|s_j|\geq\pi$, then both $B_{i_{j-1}}$ and $B_{i_{j+1}}$ intersect $s_j$ over an arc of length at most $\pi/3$, which implies the required relation. Therefore, we may assume that $|s_j| < \pi$. In particular, it means that $p\neq r$.

Suppose that $\angle qrp > \pi/2$. This implies that the side $pq$ is the largest in the triangle $pqr$, thus is longer than $1$. Hence, there is the triangle $F_i\in\mathcal{F}$ such that $F_i = psq$ for some $s$.

Since $s$ and $r$ are at different sides from $pq$, we have $\angle psq + \angle qrp\leq\pi$, because $psq$ and $pqr$ are Delaunay triangles. But then $\sin\angle psq\leq\sin(\pi - \angle qrp) = \sin\angle qrp$, which, combined with the law of sines, contradicts the inequality $R(psq) < R(pqr)$. Therefore, the case $\angle qrp > \pi/2$ is impossible; similar to the case $\angle rpq > \pi/2$.

Since none of $\angle qrp$ and $\angle rpq$ is obtuse,
the proof of Lemma~\ref{lemma:small-R-is-good} works in the opposite way.
Indeed, let $o$ be the circumcenter of $pqr$. We know that $o$ is inside $K_q$; thus, the segment $[o, q]$ intersects $s_q$. If $t$ is the intersection point, then $|t - p|\geq |t - q| = 1$ and $|t - r|\geq |t - q| = 1$; thus, $t\in s_q\setminus(B_p\cup B_r)$.
\end{proof}

\subsection{Finishing the proof of Claim~\ref{lemma:can-construct-gamma-ij}}

The following two lemmas are the main lemmas of the whole proof. Lemma~\ref{lemma:delaunay-covers-neighbors} states that $I$ is the sequence of all involved disks, and hence, we only need those disks to build a curve containing $\partial S$. The last lemma establishes the fact that the curve built as formulated in Claim~\ref{lemma:can-construct-gamma-ij} indeed covers $\partial S$.

\begin{lemma}\label{lemma:delaunay-covers-neighbors}
If $\mathcal{D}_{ij}$ is a subsegment and $D_k\in\mathcal{D}_{ij}$, then $k\in I$ if and only if $B_k$ is involved in $\sigma_{ij}$. In particular, if $D_i$ is a $2$-disk, then $i\in I$.
\end{lemma}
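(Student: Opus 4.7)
The plan is to establish the two implications separately, using Lemma~\ref{lemma:small-R-is-good} and Lemma~\ref{lemma:covered-arc-implies-small-R} as the principal tools.

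For the direction ``$c_k \notin I$ implies $B_k$ is not involved,'' I observe that if $c_k$ lies strictly inside $E$ (away from $\partial E$), then every face of $\mathcal{F}$ incident to $c_k$ is contained in $E$. Since the tree $T$ is one-dimensional, each such face must actually be a Delaunay triangle $F$ with $R(F) < 1$. The cones of these triangles at $c_k$ tile the full $2\pi$ angle around $c_k$, and by Lemma~\ref{lemma:small-R-is-good} each corresponding arc of $\partial B_k$ is covered by the two other vertices of that triangle. Hence $\partial B_k \subset \bigcup_{\ell \neq k} B_\ell$, so no point of $\partial B_k$ belongs to $\partial S$, and in particular $B_k$ is not involved in any $\sigma_{ij}$.

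For the converse, assume $c_k = d_j$ for some $j$. Lemma~\ref{lemma:covered-arc-implies-small-R} produces a point $t \in s_j$ with $t \notin B_{i_{j-1}} \cup B_{i_{j+1}}$, and in fact the proof of that lemma locates $t$ on the segment from $c_k$ to the circumcenter of the Delaunay triangle $c_{i_{j-1}} c_k c_{i_{j+1}}$, which places $t$ on the outward side of $\partial E$ at $d_j$. I need to show that (i) no other disk $B_\ell$ covers $t$, and (ii) $t \in R_{ij}$. For (ii), the arc $s_j$ sits in the exterior cone at $d_j$, while the tree segments adjacent to $c_k$ that separate $R_{ij}$ from neighboring regions lie on the interior side of $\partial E$, so a sufficiently small neighborhood of $t$ in the complement of $E$ lies in a single region, which must be $R_{ij}$ because $D_k \in \mathcal{D}_{ij}$ by assumption. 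For (i), I would argue that any hypothetical covering center $c_\ell$ would satisfy $|c_k - c_\ell| < 2$, and by running the greedy construction of Lemma~\ref{lemma:delaunay-sorted-process} one obtains a Delaunay triangle of the form $c_k c_\ell c_{?}$ with circumradius less than $1$; such a triangle lies in $E$, its cone at $c_k$ overlaps $s_j$, and Lemma~\ref{lemma:small-R-is-good} then shows that the neighbors cover the entire subarc of $s_j$ inside that cone, contradicting the uncovered point $t$ once one ranges over all candidate $c_\ell$.

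The main obstacle is part (i): upgrading the purely local non-covering of Lemma~\ref{lemma:covered-arc-implies-small-R}, which only excludes the two boundary neighbors of $c_k$ on $\partial E$, into a global statement ruling out every other disk. The natural bootstrap uses the small-circumradius Delaunay triangles on the interior side of $\partial E$ together with the simple-connectedness of $E$ from Lemma~\ref{lemma:E-is-connected} to constrain where a covering center can be. Additional care is likely needed for the boundary cases where $c_k$ is a leaf or a high-degree vertex of $T$, or where the same geometric point appears multiple times in the traversal $(d_1, \ldots, d_k)$, so that ``the two boundary neighbors'' and ``the arc $s_j$'' are understood with the correct local combinatorics.
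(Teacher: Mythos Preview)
Your first direction ($k\notin I$ implies $B_k$ not involved) is essentially the paper's argument: both apply Lemma~\ref{lemma:small-R-is-good} to each small-circumradius Delaunay triangle incident to $c_k$ and conclude that the relevant arc of $\partial B_k$ is covered by neighbouring $B_\ell$'s. The paper only treats the arc $\ccwarc{c_k}{c_{k-1}}{c_{k+1}}$ rather than the full circle, but the mechanism is identical.

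The converse is where your proposal breaks down, and the bootstrap you sketch for part~(i) cannot be made to work as stated. Two concrete problems. First, a covering centre $c_\ell$ with $|c_k-c_\ell|<2$ need not be joined to $c_k$ by any Delaunay edge, let alone sit in a Delaunay triangle $c_kc_\ell c_{?}$ of circumradius below~$1$; one of the configurations the paper explicitly rules out (their case $i_{k-1}=i_k-1$, $l=i_{k-2}=i_k-3$) has exactly such a $c_\ell$, with every Delaunay triangle through $c_k$ and $c_\ell$ having circumradius at least~$1$. Second, even when such a small triangle exists, it lies in $E$, so its cone at $c_k$ is on the interior side of $\partial E$; the arc $s_j$ is by definition the complementary exterior arc, so Lemma~\ref{lemma:small-R-is-good} covers the wrong piece of $\partial B_k$ and gives no contradiction with $t\in s_j$.

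The paper does not look for a structural shortcut here. It first invokes Observation~\ref{lemma:only_inner} to force the offending $c_\ell$ into $\mathcal{D}_{ij}$, then uses that $\mathcal{D}_{ij}$ has at most five disks to enumerate the handful of possible patterns for $(l,i_{k-1},i_k)$ and dispose of each by a direct computation (law of sines, locating circumcentres, and the circumradius thresholds already recorded by membership in $E$). That finite case analysis is the missing ingredient in your proposal.
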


\begin{proof}
The proof is divided into two parts. First we prove that if $B_k$ is involved in some $\sigma_{ij}$, then $k\in I$. After it we prove the converse.

Suppose that $k\notin I$. Then the arc $\ccwarc{c_k}{c_{k-1}}{c_{k+1}}$ can be divided into smaller arcs by all edges of $\mathcal{F}$ that have $c_k$ as one of the endpoints. More specifically, $\ccwarc{c_k}{c_{k-1}}{c_{k+1}}$ is divided into arcs of type $\ccwarc{c_k}{p}{q}$, where $c_kpq\in\mathcal{F}$. But, according to Lemma~\ref{lemma:small-R-is-good}, each arc $\ccwarc{c_k}{p}{q}$ is covered by $B_p$ and $B_q$. This implies that $B_k$ is not involved in any $\sigma_{ij}$.

\medskip
If $D_i$ is a $2$-disk, then, due to Observation~\ref{lemma:far}, the point $f_i$ is not covered by any disk $B_k$. Therefore, what remains to prove is that if $\mathcal{D}_{ij}$ is a subsegment, then for any
$k$ such that $i < i_k < j$, the disk $B_{i_k}$ is involved in $\sigma_{ij}$.

We remind that $s_k = \ccwarc{d_k}{d_{k-1}}{d_{k+1}}$ by definition. If $p\in s_k$, we call the arc $\ccwarc{d_k}{d_{k-1}}{p}$ a \emph{prefix} of $s_k$, and the arc $\ccwarc{d_k}{p}{d_{k+1}}$ a \emph{suffix} of $s_k$.\footnote{Formally, if $p$ belongs to the ray $[d_kd_{k-1})$, then by the corresponding prefix we mean the point $p$ alone, while our definition specifies it to be the whole circle. Similarly, if $p\in[d_kd_{k+1})$, we say that $p$ is the suffix, not the arc of length $2\pi$.}

We know that $B_{i_{k-1}}$ and $B_{i_{k+1}}$ intersect $s_k$ by some prefix and some suffix of this arc.
Due to Lemma~\ref{lemma:covered-arc-implies-small-R}, we have $s_k\not\subset B_{i_{k-1}}\cup B_{i_{k+1}}$.
To prove that $B_{i_k}$ is involved in $\sigma_{ij}$, it suffices to show that there is a point at $s_k\setminus(B_{i_{k-1}}\cup B_{i_{k+1}})$ that is not covered by $S$. In fact, we will prove that no point of $s_k\setminus(B_{i_{k-1}}\cup B_{i_{k+1}})$ belongs to $S$. Suppose that some disk $B_l$ intersects the arc $s_k\setminus(B_{i_{k-1}}\cup B_{i_{k+1}})$. Due to Lemma~\ref{lemma:only_inner}, $D_l\in\mathcal{D}_{ij}$.
Without loss of generality we may assume that $i = 1$, $j = 5$, and $l < i_k$. Consider all possible cases, see Figure~\ref{fig:delaunay-covers-neighbors-cases}.

\begin{figure}[h!]
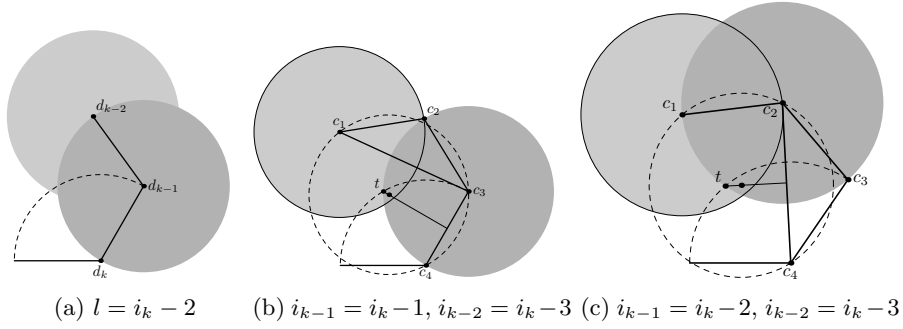

    \centering
    \begin{subfigure}[t]{.26\textwidth}
    \includegraphics[width=.95\textwidth]{pics/delaunay-covers-neighbors-case-1.mps}
    \caption{$l = i_k - 2$}
    \label{fig:delaunay-covers-neighbors-case-a}
    \end{subfigure}
    \begin{subfigure}[t]{.35\textwidth}
    \includegraphics[width=.95\textwidth]{pics/delaunay-covers-neighbors-case-2.mps}
    \caption{$i_{k-1} = i_k - 1$, $i_{k-2} = i_k - 3$}
    \label{fig:delaunay-covers-neighbors-case-b}
    \end{subfigure}
    \begin{subfigure}[t]{.35\textwidth}
    \includegraphics[width=.95\textwidth]{pics/delaunay-covers-neighbors-case-3.mps}
    \caption{$i_{k-1} = i_k - 2$, $i_{k-2} = i_k - 3$}
    \label{fig:delaunay-covers-neighbors-case-c}
    \end{subfigure}
    \caption{Cases from Lemma~\ref{lemma:delaunay-covers-neighbors}.}
    \label{fig:delaunay-covers-neighbors-cases}
\end{figure}

\begin{itemize}
    \item $i_{k-1} = i_k - 1$, $l = i_{k-2} = i_k - 2$. See Figure~\ref{fig:delaunay-covers-neighbors-case-a}. Since $B_{i_{k-1}}$ covers an arc of length $\pi/3$ of $s_k$, and $B_l$ covers the prefix of length $\pi - \angle{d_{k-2}d_{k-1}d_k}$, we have $\angle{d_{k-2}d_{k-1}d_k} < 2\pi/3$, which contradicts the fact that $R(d_{k-2}d_{k-1}d_k)\geq 1$; otherwise our process of adding triangles in ascending order would add this triangle as well. In particular, in this case $\pi - \angle{d_{k-2}d_{k-1}d_k} > 0$ must hold.
    \item $i_{k-1} = i_k - 1$, $l = i_{k-2} = i_k - 3$. See Figure~\ref{fig:delaunay-covers-neighbors-case-b}. In all remaining cases, including this, $l = i_k - 3$, which implies that $l = 1$ and $i_k = 4$. By the law of sines for $c_1c_4c_3$,
    $$\frac{|c_1 - c_3|}{\sin\angle c_1c_4c_3}\geq 2,$$
    hence, since $c_1c_2c_3\in\mathcal{F}$ and, consequently, $|c_1 - c_3|\leq\sqrt3$, we have
    $$\sin\angle c_1c_4c_3 < \frac{\sqrt{3}}{2},$$
    therefore,
    $$\angle c_1c_4c_3 < \frac{\pi}{3}\text{ or }\angle c_1c_4c_3 > \frac{2\pi}{3}.$$
    If $\angle{c_1c_4c_3} > 2\pi/3$, then $|c_1 - c_3| > \sqrt{3}$; therefore, $\angle c_1c_4c_3 < \pi/3$.
    
    Let $t\in s_k$ be the point such that $B_3$ covers the arc of $s_k$ between $c_3$ and $t$. In particular, the triangle $c_3c_4t$ is regular. Using our assumption that
    $B_1\cap(s_k\setminus\ccwarc{d_k}{c_3}{t})\neq\varnothing$,
    and the fact that $\angle c_3c_4c_1 < \pi/3$, we conclude that $|c_1 - t| < 1$ must hold. But then some point of the segment $[t, (c_4 + c_3)/2]$ must be the circumcenter of $c_1c_3c_4$. Since it is not $t$, we have $R(c_1c_3c_4) < 1$, which leads to a contradiction.
    \item $i_{k-1} = i_k - 2$, $l = i_{k-2} = i_k - 3$. See Figure~\ref{fig:delaunay-covers-neighbors-case-c}. By the law of sines for $c_1c_4c_2$, we have
    $$\frac{|c_1 - c_2|}{\sin\angle c_1c_4c_2}\geq 2,$$
    hence,
    $$\sin\angle c_1c_4c_2 \leq \frac{1}{2},$$
    therefore,
    $$\angle c_1c_4c_2 \leq \frac{\pi}{6}\text{ or }\angle c_1c_4c_2 \geq \frac{5\pi}{6}.$$
    Since $c_1c_2$ is the minimum side of $c_1c_2c_4$, we have the inequality $\angle c_1c_4c_2\leq\pi/3$; therefore, $\angle c_1c_4c_2\leq\pi/6$. Since $c_2c_3c_4\in\mathcal{F}$, we know that
    $$\angle c_3c_4c_2 > \pi/6\Rightarrow \angle c_1c_4c_2 < \angle c_3c_4c_2.$$
    
    Let $t\in s_k$ be the point such that $B_2$ covers the arc of $s_k$ between $c_3$ and $t$. In particular, $c_2c_3c_4t$ is a rhombus. Using our assumption that
    $B_1\cap(s_k\setminus\ccwarc{d_k}{c_3}{t})\neq\varnothing$,
    and the fact that $\angle c_2c_4c_1 < \angle c_2c_4c_3 = \angle c_2c_4t$, we conclude that $|c_1 - t| < 1$ must hold.
    
    Analogously to the previous case, $\angle{c_2c_1c_4} < \pi/3$. Therefore, $\angle c_2c_1c_4 < \pi/2$, which is equivalent to the inequality
    $$|c_1 - (c_2 + c_4)/2| > |(c_2 - c_4)/2|.$$
    But then, due to continuity, some point of the segment $[t, (c_2 + c_4)/2]$ must be the circumcenter of $c_1c_2c_4$. Since it is not $t$, we have $R(c_1c_2c_4) < 1$, which leads to a contradiction.
    \item $i_{k-1} = i_k - 1$, $l = i_{k-3} = i_k - 3$. Since $R(c_2c_3c_4)\geq 1$ and $R(c_1c_2c_3)\geq 1$, both $\angle c_2c_3c_4$ and $\angle c_1c_2c_3$ are at least $2\pi/3$. Therefore, $|c_1 - c_4|\geq 2$, which contradicts that $s_k\cap B_1\neq\varnothing$.
\end{itemize}

As we got a contradiction in each case, we conclude that $s_k$ is covered by $B_{i_{k-1}}$ and $B_{i_{k+1}}$.
\end{proof}


\begin{lemma}
If $\mathcal{D}_{ij}$ is a subsegment, and $p\in \sigma_{ij}\cap\partial{B_{i_k}}$ for some $D_{i_k}\in\mathcal{D}_{ij}$, then $p\in s_k$ and $p$ is not covered by any of $B_{i_{k-1}}$ and $B_{i_{k+1}}$.
\end{lemma}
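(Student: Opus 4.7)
The plan is to handle the two conclusions separately. The second conclusion --- that $p$ is not covered by $B_{i_{k-1}}$ or $B_{i_{k+1}}$ --- is immediate: these are open disks, and any $p \in \sigma_{ij} \subset \partial S$ must avoid every open $B_\ell$ by the definition of the boundary of $S$.

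For the first conclusion $p \in s_k$, I would argue by contradiction: assume $p \in \partial B_{i_k} \setminus s_k$. Then the ray from $d_k = c_{i_k}$ through $p$ points into the complement of the cone $K_k$ at $d_k$, which coincides with the interior angle of $E$ at the boundary vertex $d_k$. The first key step is to characterize how this interior angle is filled. Using the definition $E = \bigcup\{F_i\,\colon\,R(F_i) < 1\}\cup T$ together with the planarity and simple-connectedness of $E$ (Lemma~\ref{lemma:E-is-connected}), I would show that the interior angle at $d_k$ decomposes into angular sectors corresponding to small-circumradius Delaunay triangles with $d_k$ as a vertex, possibly together with isolated edge directions along sides of such triangles or along tree edges emanating from $d_k$.

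Next, in each subcase I would show that the assumption $p \in \partial S$ is violated. If the direction from $d_k$ to $p$ lies in the interior of an angular sector corresponding to some Delaunay triangle $d_k q r$ with $R(d_k q r) < 1$, then Lemma~\ref{lemma:small-R-is-good} applies directly: $p$ lies in the arc $s_q$ of that lemma, so $p \in B_q \cup B_r$, contradicting $p \in \partial S$. If instead the direction is exactly along some edge $d_k c_\ell$ contained in $E$, I would bound the length $|c_\ell - d_k|$: a tree edge has length exactly $1$, and any side of a Delaunay triangle with circumradius less than $1$ has length strictly less than $2$ (bounded by the diameter of its circumcircle). In either situation $p$ satisfies $|p - c_\ell| < 1$ (or else $p = c_\ell$), so $p \in B_\ell$, again contradicting $p \in \partial S$.

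The main obstacle is the first step: precisely describing the interior angle of $E$ at the boundary vertex $d_k$ and verifying that it is exhausted by the small-triangle sectors together with isolated edge directions, with no gap unaccounted for. This requires tracing $\partial E$ locally at $d_k$, distinguishing true $2$-dimensional boundary pieces from tree-edge ``whiskers'', and relying on Lemma~\ref{lemma:E-is-connected} to guarantee a well-defined interior/exterior near $d_k$. Once this decomposition is established, the application of Lemma~\ref{lemma:small-R-is-good} and the elementary edge-length bound complete the argument with no further computation.
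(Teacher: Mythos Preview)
Your proposal is correct and follows essentially the same idea as the paper, just far more explicitly. The paper's proof is two sentences: it notes that every point of $E$ lies at distance strictly less than $1$ from some center (hence $E\subset S$), so $p\in\partial S$ forces $p\notin E$; together with the simple-connectedness of $E$ (Lemma~\ref{lemma:E-is-connected}) this gives $p\in s_k$, and the second conclusion is immediate since $p\notin S$. Your decomposition of the complementary arc at $d_k$ into angular sectors of small-circumradius Delaunay triangles, followed by an application of Lemma~\ref{lemma:small-R-is-good}, is precisely the mechanism hiding behind the paper's word ``immediately''---the very same decomposition is spelled out verbatim in the first paragraph of the proof of the preceding Lemma~\ref{lemma:delaunay-covers-neighbors}. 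Your separate treatment of edge directions is harmless but redundant: Lemma~\ref{lemma:small-R-is-good} already covers the closed arc, endpoints included.
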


\begin{proof}
By construction, $p\notin E$ because every point from $E$ is at distance less than $1$ from one of the points. This fact and Lemma~\ref{lemma:E-is-connected} immediately imply that $p\in s_k$.

Similarly, since $|p - c_{i_{k-1}}|\geq 1$ and $|p - c_{i_{k+1}}|\geq 1$, we also have that $p$ is not covered by any of $B_{i_{k-1}}$ and $B_{i_{k+1}}$.
\end{proof}

This finishes the proof of Claim~\ref{lemma:can-construct-gamma-ij}.



\end{appendices}

\end{document}